\newtheorem{thm}{Theorem}[section]
\newtheorem{cor}[thm]{Corollary}
\newtheorem{Prop}[thm]{Proposition}
\newtheorem{lemme}[thm]{Lemma}
\newtheorem{fait}[thm]{Fact}
\newtheorem{conj}[thm]{Conjecture}
\newtheorem{qu}[thm]{Question}
\newtheorem{ex}[thm]{Example}
\newtheorem{rem}[thm]{Remark}
\newcommand{\monster}{\mathcal U}
\newcommand{\pred}{\mathbf P}
\newcommand{\pprec}{\prec^{+}}
\newcommand{\ordI}{\mathcal I}
\newcommand{\ordJ}{\mathcal J}
\DeclareMathOperator{\tp}{tp}
\DeclareMathOperator{\VC}{VC}
\def\indsym#1#2{%
 \setbox0=\hbox{$\m@th#1x$}%
 \kern\wd0%
 \hbox to 0pt{\hss$\m@th#1\mid$\hbox to 0pt{$\m@th#1^{#2}$\hss}\hss}%
 \lower.9\ht0\hbox to 0pt{\hss$\m@th#1\smile$\hss}%
 \kern\wd0}
\def\nindsym#1#2{%
 \setbox0=\hbox{$\m@th#1x$}%
 \kern\wd0%
 \hbox to 0pt{\hss$\m@th#1\not$\kern1.4\wd0\hss}
 \hbox to 0pt{\hss$\m@th#1\mid$\hbox to 0pt{$\m@th#1^{#2}$\hss}\hss}%
 \lower.9\ht0\hbox to 0pt{\hss$\m@th#1\smile$\hss}%
 \kern\wd0}
\begin{document}

\title{Invariant types in NIP theories}
\author{Pierre Simon}

\address{Pierre Simon\\
CNRS\\
Institut Camille Jordan\\
Universit\'e Claude Bernard - Lyon 1\\
43 boulevard du 11 novembre 1918\\
69622 Villeurbanne Cedex, France}

\email{simon@math.univ-lyon1.fr}

\urladdr{http://www.normalesup.org/\textasciitilde{}simon/}

\maketitle

\begin{abstract}
We study invariant types in NIP theories. Amongst other things: we prove a definable version of the $(p,q)$-theorem in theories of small or medium directionality; we construct a canonical retraction from the space of $M$-invariant types to that of $M$-finitely satisfiable types; we show some amalgamation results for invariant types and list a number of open questions.
\end{abstract}
%
%

\section*{Introduction}
An $M$-invariant type is a type over the monster model which is invariant under all automorphisms fixing the model $M$ pointwise. There are two well-known classes of $M$-invariant types: types finitely satisfiable in $M$ and types definable over $M$. In a stable theory, all $M$-invariant types are both definable over $M$ and finitely satisfiable in $M$. This is no longer true in NIP theories. An old theorem of Poizat and Shelah says that a theory is NIP if and only if the number of $M$-invariant types for any model $M$ is at most $2^{|M|}$, if and only if there are at most $2^{|M|}$ global types finitely satisfiable in $M$(\cite{PoiInstable}, \cite{PoiPostScript}). In fact, an $M$-invariant type is determined by the type of its Morley sequence over $M$. This basic observation already suggests that invariant types should have special properties in NIP theories.

Any type finitely satisfiable in $M$ is the limit of some ultrafilter on $M$. We will show that if $M$ and $L$ are countable, then any such type is in fact a limit of a \emph{sequence} of elements of $M$. More generally, we observe that a theory in a countable language is NIP if and only if any sequence $(p_i:i<\omega)$ of types (over any set) has a converging subsequence. This is related to some results in Banach space theory. See Rosenthal's theorem (\cite{Banach_topics}) and also the work of Bourgain, Fremlin and Talagrand \cite{BFT} which uncovers an analogue of the IP/NIP dichotomy in the context of measurable functions on Polish spaces.

An important result about NIP families is the $(p,q)$-theorem of finite combinatorics, proved by Alon-Kleitman and Matou\u sek. This theorem states roughly the following: Let $\phi(x;y)$ be an NIP formula and fix $p\geq q$ large enough, then there is $N$ with the following property: If $\mathcal S=\{\phi(x;b_i):i<n\}$ is a finite family of non-empty instances of $\phi(x;y)$ such that out of every subfamily $\mathcal S_0\subseteq \mathcal S$ of size $p$, there is $\mathcal S_1\subseteq \mathcal S_0$ of size $q$ whose conjunction is consistent, then there is an $N$ point set intersecting each $\phi(x;b_i)$. This result has proved to be extremely useful in model theory; it is a major ingredient in the proof of the UDTFS conjecture \cite{ExtDef2} and also in the study of generics in definably amenable groups \cite{tamedyn}. We will see in Section \ref{sec_pq} how this theorem is linked with properties of finitely satisfiable types and give a model theoretic proof of a weaker version of it. Also, we prove in Theorem \ref{th_pqdef} a definable version of the $(p,q)$-theorem, assuming that types over countable models have at most $\aleph_0$ coheirs. This generalises previous results in \cite{invdp} where dp-minimality was assumed.

The biggest mystery concerning invariant types has to do with those invariant types that are neither definable nor finitely satisfiable. Let $p$ be a global $M$-invariant $\phi$-type. In general, whether or not $\phi(x;b)$ is in $p$ depends on the full type $\tp(b/M)$ and not only on its restriction to $\phi$-formulas (or rather $\phi^{opp}$-formulas). In Section \ref{sec_localdescription} we propose a new point of view of invariant $\phi$-types which remedies this. An invariant $\phi$-type is described in a way that only involves the formula $\phi$ and not the rest of the language.

An intuition that was already presented in \cite{invdp} is that amongst $M$-invariant types, $M$-definable types and $M$-finitely satisfiable types should be seen as two opposite extremes, and other invariant types sit somehow in between. In fact, one should be able to analyse a general invariant type into a finitely satisfiable part and a definable `quotient'. This is very vague, but we will give precise conjectures along this line at the end of the paper. In \cite{invdp}, we gave one piece of evidence in favour of this idea: we showed that a dp-minimal invariant type is either finitely satisfiable in a small model or definable. We will in fact give another, more conceptual, proof of this (Theorem \ref{th_dpmindich}). In Section \ref{sec_fm} we show how to define a canonical retraction $F_M$ from the space of $M$-invariant types to that of $M$-finitely satisfiable types. We prove a number of commutativity properties of this map. This is related to another idea: the relation ``$p$ commutes with $q$", where $p$ and $q$ are global invariant types, is very meaningful and expresses in some way that $p$ and $q$ are \emph{far away} from each other (think of DLO). It is shown in \cite{distal} that two types which commute behave with respect to each other as in a stable theory. For example $p$ has a unique non-forking extension to a realisation of $q$ and the limit type of any Morley sequence of $p$ over a realisation of $q$ is equal to the invariant extension of $p$.

This retraction $F_M$ is interesting in its own right. It is used in \cite{ChePilSim} to show that some properties of definably amenable groups are preserved under taking the Shelah expansion. Nevertheless, this map remains rather puzzling. It would be nice to have a better understanding of it, for instance a different construction leading to it. Also, we did not solve the following question, which, if answered positively, would certainly shed some new light on invariant types: are the fibres of $F_M$ of bounded cardinality (say $2^{|T|}$), independent of $M$?

Finally, in Section \ref{sec_amalg}, we prove some amalgamation properties of invariant types and in Section \ref{sec_questions} we state a number of open questions.

\section{Setting and basic facts}

Throughout, $T$ is a complete theory in a language $L$ and $\monster$ is a monster model. We usually do not assume that $T$ is NIP. If $A\subset \monster$ is a set of parameters, we let $L(A)$ denote the set of formulas with parameters in $A$. We do not distinguish between points and tuples. Thus $a,b,c,\ldots$ usually denote tuples of variables and $a\in A$ means $a\in A^{|a|}$.

We write $M \pprec N$ to mean $M\prec N$ and $N$ is $|M|^+$-saturated.

The notation $\phi^0$ means $\neg \phi$ and $\phi^1$ means $\phi$.

\smallskip
Let $\Delta$ be a set of formulas and $A$ a set of parameters. A (possibly finite) sequence $I=(a_i : i\in \ordI)$ is \emph{$\Delta$-indiscernible} over $A$, if for every integer $k$ and two increasing tuples $i_1 <_{\ordI} \cdots <_{\ordI} i_k$ and $j_1 <_{\ordI} \cdots <_{\ordI} j_k$, $b\in A$ and formula $\phi(x_1,\ldots,x_k;y)\in \Delta$, we have $\phi(a_{i_1},\dots,a_{i_k};b)\leftrightarrow \phi(a_{j_1},\dots,a_{j_k};b)$. An indiscernible sequence is an infinite sequence which is $\Delta$-indiscernible for all $\Delta$.

Let $I=(a_i:i\in \ordI)$ be any infinite sequence. The \emph{Ehrenfeucht-Mostowski type} (or \emph{EM-type}) of $I$ over $A$ is the set of $L(A)$-formulas $\phi(x_1,\ldots,x_k)$ such that $\monster \models \phi(a_{i_1},\ldots,a_{i_k})$ for all $i_1 < \cdots < i_k\in \ordI$, $k<\omega$. If $I$ is an indiscernible sequence, then for every $k$, the restriction of the EM-type of $I$ to formulas in $k$ variables is a complete type over $A$. If $I$ is any sequence and $\ordJ$ is any infinite linear order, then using Ramsey's\index{Ramsey} theorem and compactness, we can find an indiscernible sequence $J$ indexed by $\ordJ$ and realising the EM-type of $I$ (see e.g., \cite[Lemma 5.1.3]{TentZieg}).

Sequences $(I_i:i<k)$ are said to be \emph{mutually indiscernible} over $A$ if each $I_i$ is indiscernible over $A\cup I_{\neq i}$.

\smallskip
Let $\Delta=\{\phi_i(x;y_i) : i<\alpha\}$ be a set of formulas with the same first variable $x$. Then a \emph{$\Delta$-type} over $A$ is a maximal consistent set of formulas of the form $\phi_i(x;b)$ for some $i<\alpha$ and $b\in A$. The set of $\Delta$-types over $A$ is denoted $S_{\Delta}(A)$. If $\Delta=\{\phi(x;y)\}$, we write $\phi$ instead of $\Delta$. The $\Delta$-type $\tp_{\Delta}(a/A)$ of $a$ over $A$ is defined in the obvious way.

If $\phi(x;y)$ is a formula, then we let $\phi^{opp}(y;x)$ denote the opposite formula: $\phi^{opp}(y;x)=\phi(x;y)$, but the roles of variables and parameters are reversed. Hence a $\phi$-type over $A$ is a type in variable $x$ and a $\phi^{opp}$-type over $A$ is a type in variable $y$.

\smallskip
We assume that the reader is familiar with basic facts concerning NIP theories as presented, e.g., in \cite[Chapter 2]{NIPbook}, though we will recall all that we need. The main property, which can be taken as a definition, is the following.

\begin{fait}
A formula $\phi(x;y)$ is NIP if and only if for any indiscernible sequence $I=(a_i:i<\omega)$, for some $n<\omega$, the truth value of $\phi(a_i;b)$ is constant for $n\leq i<\omega$.
\end{fait}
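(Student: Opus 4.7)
The plan is to prove both implications by contraposition, using only the Ramsey-plus-compactness extraction of indiscernibles recalled just above the statement.

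For the direction that NIP implies eventually constant behaviour, suppose $\phi$ is NIP and, for contradiction, that some indiscernible sequence $I=(a_i:i<\omega)$ and parameter $b$ give truth values $\phi(a_i;b)$ that are not eventually constant. Then $E=\{i:\models\phi(a_i;b)\}$ and its complement in $\omega$ are both infinite. Hence for every $n$ and every pattern $\varepsilon\in 2^n$, one can select indices $i_1<\cdots<i_n$ with $i_k\in E$ iff $\varepsilon_k=1$; by indiscernibility of $I$, the tuple $(a_{i_1},\ldots,a_{i_n})$ has the same type over $\emptyset$ as $(a_0,\ldots,a_{n-1})$, so $b$ witnesses that the partial type $\{\phi(x_k;y)^{\varepsilon_k}:k<n\}$ over $(a_0,\ldots,a_{n-1})$ is consistent. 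Since $\varepsilon$ was arbitrary, a compactness argument along $I$ yields, for each $\varepsilon\in 2^\omega$, a parameter $b_\varepsilon$ with $\phi(a_i;b_\varepsilon)^{\varepsilon_i}$ for every $i<\omega$. This is a standard witness of IP for $\phi$, contradicting the assumption.

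For the converse, assume $\phi$ has IP with witnesses $(c_i:i<\omega)$ and $(b_S:S\subseteq\omega)$ such that $\models\phi(c_i;b_S)$ iff $i\in S$. Extract an indiscernible sequence $(c'_i:i<\omega)$ realising the EM-type of $(c_i)$. For each $n$, the tuple $(c'_0,\ldots,c'_{n-1})$ has the same $\emptyset$-type as some increasing subtuple $(c_{j_0},\ldots,c_{j_{n-1}})$; letting $S=\{j_k:k<n\text{ and }k\text{ is even}\}$, the parameter $b_S$ shows that the partial type asserting ``$\phi(c'_k;y)$ holds exactly when $k<n$ is even'' is consistent. A compactness argument then yields a single $b$ with $\phi(c'_i;b)$ iff $i$ is even, exhibiting an indiscernible sequence along which the truth values of $\phi(\,\cdot\,;b)$ are not eventually constant.

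The only real subtlety lies in the compactness steps: one must set up the appropriate partial type in the free variable $y$ and verify that indiscernibility together with the witnesses already in hand delivers finite satisfiability. This is routine bookkeeping rather than a genuine obstacle, and no results beyond what has been stated in the excerpt are required.
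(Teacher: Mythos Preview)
The paper does not actually prove this fact; it is stated without proof as something that ``can be taken as a definition'', with a pointer to \cite[Chapter 2]{NIPbook}. So there is no paper proof to compare against.

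Your argument is the standard one and is essentially correct. One small expository point in the converse direction: saying that $(c'_0,\ldots,c'_{n-1})$ has ``the same $\emptyset$-type as some increasing subtuple $(c_{j_0},\ldots,c_{j_{n-1}})$'' is not what realising the EM-type gives you --- it only guarantees that $(c'_0,\ldots,c'_{n-1})$ satisfies every formula that holds for \emph{all} increasing tuples from $(c_i)$. But that is precisely what you need here: the formula $\exists y\,\bigwedge_{k<n}\phi(x_k;y)^{\epsilon_k}$ (with $\epsilon_k=1$ iff $k$ is even) holds for \emph{every} increasing tuple $c_{j_0},\ldots,c_{j_{n-1}}$, witnessed by $b_S$ with $S=\{j_k:k\text{ even}\}$. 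Hence it lies in the EM-type and is satisfied by $(c'_0,\ldots,c'_{n-1})$. With that rephrasing the compactness step and the rest go through as you wrote.
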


In particular for any $A$, the sequence $(\tp(a_i/A):i<\omega)$ converges. We denote the limit type by $\lim(I/A)$.

By compactness, if $\phi(x;y)$ is NIP, then there is some finite $\Delta$ such that the conclusion holds for any $\Delta$-indiscernible sequence.

\subsection{Invariant types}

We present some basic facts about invariant types in NIP theories. The reader may consult \cite[Chapter 2]{NIPbook} for more information and proofs.

\smallskip
By an $M$-\emph{invariant type}, we always mean a global type $p(x)\in S(\monster)$ which is invariant under automorphisms fixing $M$ pointwise. An \emph{invariant type} is a global type which is $M$-invariant for some $M$. There are two well-known subclasses of invariant types: definable types and finitely satisfiable types. A type $p(x)\in S(\monster)$ is \emph{definable} if for every formula $\phi(x;y)\in L$, there is some $d\phi(y)\in L(\monster)$ such that for any $b\in \monster$, $p\vdash \phi(x;b) \iff \monster \models d\phi(b)$. The type $p$ is \emph{$M$-definable}, or \emph{definable over $M$}, if all the formulas $d\phi(y)$ can be taken with parameters in $M$. A type $p(x)\in S(\monster)$ is \emph{finitely satisfiable} in $M$, or \emph{$M$-finitely satisfiable}, if every formula $\phi(x;b)\in p$ has a realisation in $M$. If $p$ is $M$-invariant and finitely satisfiable in some $N$, then it is $M$-finitely satisfiable, and similarly for definable.

The set of $M$-invariant types is a closed subspace of the space of global types. In particular it is compact. The same is true of the space of global $M$-finitely satisfiable types.

Given $p(x)$ and $q(y)$ two $M$-invariant types, one can define the product $p(x)\otimes q(y)$ as the type $r(x,y)\in S(\monster)$ such that for any $N\supseteq M$, $r|_N = \tp(a,b/N)$ where $b\models q|_N$ and $a\models p|_{Nb}$. This defines an $M$-invariant type. The operation $\otimes$ is associative, but not commutative in general.

We will sometimes write $p_x \otimes q_y$ instead of $p(x)\otimes q(y)$ to make the notation more compact.

If $p(x)$ is an $M$-invariant type, we define by induction on $n\in \mathbb N^*$:\index{$p^{(n)},p^{(\omega)}$}
$$p^{(1)}(x_0) = p(x_0)\quad \text{and} \quad p^{(n+1)}(x_0,\ldots,x_n)=p(x_{n}) \otimes p^{(n)}(x_0,\ldots,x_{n-1}).$$
Let also $p^{(\omega)}(x_0,x_1,\ldots) = \bigcup p^{(n)}$. For any $B \supseteq M$, a realisation $(a_i : i<\omega)$ of $p^{(\omega)}|_B$ is called a \emph{Morley sequence} of $p$ over $B$ (indexed by $\omega$). It follows from associativity of $\otimes$ that such a sequence $(a_i:i<\omega)$ is indiscernible over $B$.

\begin{fait}\label{fact_morley}
Assume that $T$ is NIP. Let $p,q\in S_x(\monster)$ be $M$-invariant types. If $p^{(\omega)}|_M = q^{(\omega)}|_M$, then $p=q$.
\end{fait}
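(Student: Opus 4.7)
The plan is to assume $p \neq q$ and build, for every $N \geq 1$, an $M$-indiscernible sequence of length $\omega \cdot N$ along which a fixed instance of a witnessing formula alternates $N-1$ times; for $N$ large this violates the uniform NIP alternation bound, forcing $p = q$.

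Fix $\phi(x;y)$ and $b \in \monster$ with $\phi(x;b) \in p$ and $\neg\phi(x;b) \in q$, and let $N_\phi$ be a uniform alternation bound for $\phi$ (finite by NIP and compactness). I would construct $(a_\alpha : \alpha < \omega \cdot N)$ as $N$ successive blocks of order type $\omega$: block $k$ (for $0 \le k < N$) is a Morley sequence of $p$ if $k$ is even, and of $q$ if $k$ is odd, over the base $Mb \cup \{a_\beta : \beta < \omega k\}$. Every element of a $p$-block realises $p|_{Mb}$ and hence satisfies $\phi(x;b)$, while every element of a $q$-block satisfies $\neg\phi(x;b)$, so $\phi(a_\alpha;b)$ is constant on each block and flips at every block boundary, giving $N-1$ alternations along the whole sequence.

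The crucial step is to verify that this sequence is $M$-indiscernible. I would first prove the sublemma: \emph{if $\bar c$ realises $p^{(n)}|_M$, then $p|_{M\bar c} = q|_{M\bar c}$.} The proof extends $\bar c$ by some $c_n \models p|_{M\bar c}$ so that $\bar c c_n$ realises $p^{(n+1)}|_M = q^{(n+1)}|_M$; taking any realisation $\bar c' c'_n$ of $q^{(n+1)}|_M$ and an $M$-automorphism sending $\bar c'$ to $\bar c$, one produces an element $c''_n$ realising $q|_{M\bar c}$ whose type over $M\bar c$ coincides with $\tp(c_n / M\bar c) = p|_{M\bar c}$; completeness of both restricted types then forces $p|_{M\bar c} = q|_{M\bar c}$. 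Given this sublemma, an induction on $m$ shows that any increasing tuple $(a_{\alpha_1}, \ldots, a_{\alpha_m})$ from the constructed sequence has $\tp(\cdot/M) = p^{(m)}|_M$, independently of which blocks the $\alpha_j$ fall in: at step $j$ the element $a_{\alpha_j}$ realises some $r|_{\text{base}}$ with $r \in \{p,q\}$, and the sublemma applied to the initial segment (which realises $p^{(j-1)}|_M$ by induction) identifies $r|_{Ma_{\alpha_{<j}}}$ with $p|_{Ma_{\alpha_{<j}}}$.

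Hence the full sequence is $M$-indiscernible with EM-type $p^{(\omega \cdot N)}|_M$ yet exhibits $N-1$ alternations of $\phi(-;b)$; choosing $N > N_\phi + 1$ contradicts NIP of $\phi$, so $p = q$. The hard part will be the sublemma together with the inductive $M$-indiscernibility argument that leverages it -- this is where the hypothesis $p^{(\omega)}|_M = q^{(\omega)}|_M$ does its real work; everything else is routine bookkeeping around Morley sequences and the compactness-derived uniform alternation bound.
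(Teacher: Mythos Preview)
Your proof is correct and the core idea matches the paper's (the paper proves the local version, Fact~\ref{fact_morleyloc}, and says Fact~\ref{fact_morley} has the same proof): build a sequence alternating between realisations of $p$ and $q$ over $Mb$ and the previous elements, observe it is $M$-indiscernible because $p^{(\omega)}|_M=q^{(\omega)}|_M$, and contradict NIP via the alternation of $\phi(x;b)$.

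The difference is that you do more work than necessary. The paper simply takes $a_i\models p|_{Mba_{<i}}$ for even $i$ and $a_i\models q|_{Mba_{<i}}$ for odd $i$, obtaining a single $\omega$-indexed $M$-indiscernible sequence on which $\phi(x;b)$ alternates infinitely often---no $\omega$-blocks, no finite $N$, no appeal to a uniform alternation bound. Your sublemma and induction are exactly what justifies the one-line assertion ``by hypothesis, the sequence is indiscernible (its type over $M$ is $p^{(\omega)}|_M=q^{(\omega)}|_M$)'', so spelling this out is fine; but the block structure and the $\omega\cdot N$ indexing buy you nothing and can be dropped.
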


In particular, there are at most $2^{|M|+|T|}$ invariant types over a model $M$. This property actually characterises NIP theories.

A local version of Fact \ref{fact_morley} also holds, with the exact same proof.
\begin{fait}\label{fact_morleyloc}
Assume that the formula $\phi(x;y)$ is NIP. Let $p,q\in S_x(\monster)$ be $M$-invariant types and we let $p_\phi,q_\phi$ denote the restrictions of $p$ and $q$ res\-pectively to formulas of the form $\phi(x;b)$, $b\in \monster$. If $p^{(\omega)}|_M=q^{(\omega)}|_M$, then $p_\phi=q_\phi$.
\end{fait}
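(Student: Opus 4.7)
The strategy is to mimic the proof of Fact \ref{fact_morley}, observing that NIP of $\phi$ is the only hypothesis invoked in the final step, so it suffices for the local version.

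Assume for contradiction that $p_\phi \neq q_\phi$. Then, after possibly swapping $p$ and $q$, there is $b \in \monster$ with $\phi(x;b) \in p$ and $\neg \phi(x;b) \in q$. I construct a sequence $(c_i : i < \omega)$ inductively, setting $c_{2i} \models p|_{Mb c_{<2i}}$ and $c_{2i+1} \models q|_{Mb c_{<2i+1}}$. The $M$-invariance of $p$ and $q$ then gives $\models \phi(c_{2i};b)$ and $\models \neg\phi(c_{2i+1};b)$, so $\phi(x;b)$ alternates infinitely often along $(c_i)$.

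The core of the proof is to show that $(c_i : i < \omega)$ is indiscernible \emph{over $M$} (note that $b$ drops out of the base). For this, I prove by induction on $j \geq 1$ that for every $i_1 < \cdots < i_j$, $\tp(c_{i_1},\ldots,c_{i_j} / M) = p^{(j)}|_M$. The case $j = 1$ is immediate: restricting $p^{(\omega)}|_M = q^{(\omega)}|_M$ to the variable $x_0$ yields $p|_M = q|_M$. For the inductive step, use the induction hypothesis to pick an automorphism $\sigma$ of $\monster$ fixing $M$ pointwise that maps $(c_{i_1},\ldots,c_{i_{j-1}})$ to a genuine Morley sequence $(a_1,\ldots,a_{j-1})$ of $p$ over $M$. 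Let $\pi \in \{p,q\}$ denote the type from which $c_{i_j}$ was realized; the $M$-invariance of $\pi$ gives $\sigma(c_{i_j}) \models \pi|_{Ma_1 \cdots a_{j-1}}$. If $\pi = p$, the tuple $(\sigma(c_{i_j}),a_{j-1},\ldots,a_1)$ realizes $p^{(j)}|_M$ directly. If $\pi = q$, the same tuple realizes $q^{(j)}|_M$, which equals $p^{(j)}|_M$ by hypothesis. Pulling back by $\sigma^{-1}$ and using $M$-invariance of $p^{(j)}$ establishes the inductive claim.

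With $M$-indiscernibility of $(c_i)$ in hand, the infinite alternation of $\phi(x;b)$ along it contradicts NIP of $\phi$, proving $p_\phi = q_\phi$. The most delicate point is the inductive step: a mixed product such as $(q \otimes p)|_M$ need not coincide with $(p \otimes p)|_M$ in general, so one cannot simply read off indiscernibility from the construction. Conjugating the tail into a canonical $p$-Morley shape isolates the potential mismatch to the topmost factor, where the full strength of $p^{(j)}|_M = q^{(j)}|_M$ (not merely $p|_M = q|_M$) is needed to close the induction. This is exactly why Fact \ref{fact_morley} transfers verbatim to the $\phi$-local setting.
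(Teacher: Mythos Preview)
Your proof is correct and follows essentially the same approach as the paper: build an alternating sequence realising $p$ and $q$ over $Mb$ and the previous terms, observe that its type over $M$ is $p^{(\omega)}|_M=q^{(\omega)}|_M$ (hence it is indiscernible over $M$), and contradict NIP of $\phi$. The paper asserts the indiscernibility in one line, while you spell out the induction; your argument is fine, though note that it suffices to show $\tp(c_0,\ldots,c_{n-1}/M)=p^{(n)}|_M$ for consecutive indices, since this already forces indiscernibility. Two minor slips that do not affect correctness: the alternation $\phi(c_{2i};b)$, $\neg\phi(c_{2i+1};b)$ follows directly from $c_i\models \pi|_{Mb c_{<i}}$ rather than from $M$-invariance, and in the paper's convention $p^{(j)}|_M$ is realised by $(a_1,\ldots,a_{j-1},\sigma(c_{i_j}))$ in that order, not the reversed one.
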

\begin{proof}
Assume that for example $p\vdash \phi(x;b)$ and $q\vdash \neg \phi(x;b)$ for some $b\in \monster$. Build inductively a sequence $(a_i:i<\omega)$ such that:

$\cdot$ when $i$ is even, $a_i \models p \upharpoonright Mba_{<i}$;

$\cdot$ when $i$ is odd, $a_i\models q\upharpoonright Mba_{<i}$.

Then by hypothesis, the sequence $(a_i:i<\omega)$ is indiscernible (its type over $M$ is $p^{(\omega)}|_M=q^{(\omega)}|_M$) and the formula $\phi(x;b)$ alternates infinitely often on it, contradicting NIP.
\end{proof}

\subsubsection{Dividing and forking}\label{sec_fork}

Let $A\subseteq B$ and let $\pi(x)$ be a partial type over $B$. We say that $\pi(x)$ \emph{divides} over $A$ if there is an $A$-indiscernible sequence $(b_i:i<\omega)$ and a formula $\phi(x;y)$ such that $\pi(x)\vdash \phi(x;b_0)$ and the partial type $\{\phi(x;b_i):i<\omega\}$ is inconsistent. We say that $\pi(x)$ \emph{forks} over $A$ if it implies a finite disjunction of formulas, each of which divides over $A$.

\begin{fait}\label{fact_fork}
Assume that $T$ is NIP and let $M$ be a model. Then a partial type $\pi(x)$ forks over $M$, if and only if it divides over $M$, if and only if it extends to a global $M$-invariant type.
\end{fait}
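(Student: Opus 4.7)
The statement asserts the equivalence, under NIP, of three conditions on $\pi(x)$: (a) $\pi$ divides over $M$, (b) $\pi$ forks over $M$, (c) $\pi$ admits no $M$-invariant global extension. My plan is to prove the cycle $(a)\Rightarrow(b)\Rightarrow\neg(c)\Rightarrow(a)$.

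The implication $(a)\Rightarrow(b)$ is immediate: a dividing formula standing alone witnesses forking.

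For $(b)\Rightarrow\neg(c)$, I would argue contrapositively. Let $p\supseteq\pi$ be $M$-invariant, and suppose towards a contradiction that $\pi\vdash\bigvee_{i<n}\phi_i(x;b_i)$ with each $\phi_i(x;b_i)$ dividing over $M$. Completeness of $p$ gives $\phi_{i_0}(x;b_{i_0})\in p$ for some $i_0$. Any $M$-indiscernible witness $(b^{(j)})_{j<\omega}$ of dividing with $b^{(0)}=b_{i_0}$ consists of $M$-conjugates of $b_{i_0}$, so $M$-invariance of $p$ gives $\phi_{i_0}(x;b^{(j)})\in p$ for every $j$, contradicting the inconsistency of $\{\phi_{i_0}(x;b^{(j)}):j<\omega\}$. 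This step does not use NIP.

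The bulk of the work is $\neg(a)\Rightarrow\neg(c)$: if $\pi$ does not divide over $M$, it extends to an $M$-invariant global type. I would proceed in two stages. First, using the Chernikov--Kaplan theorem (in NIP, non-dividing coincides with non-forking over models), extend $\pi$ to a complete global type $p$ that does not divide over $M$; a Zorn's lemma argument on the non-forking extensions of $\pi$ produces a maximal one, which must be complete (for a missing $\phi(x;c)$, both $\pi\cup\{\phi(x;c)\}$ and $\pi\cup\{\neg\phi(x;c)\}$ would fork, yielding a forking disjunction implied by the maximal type). Second, I would show such $p$ is $M$-invariant. Assuming not, pick $\phi$ and $b,b'$ with $\tp(b/M)=\tp(b'/M)$ and $\phi(x;b),\neg\phi(x;b')\in p$; via Ramsey and saturation, build an $M$-indiscernible sequence of pairs $((c_i,c'_i):i<\omega)$ with each pair $M$-conjugate to $(b,b')$. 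The formula $\psi(x;y,y'):=\phi(x;y)\wedge\neg\phi(x;y')$ then lies in $p$ for every $(c_i,c'_i)$, and invoking NIP in the spirit of Fact~\ref{fact_morleyloc}, one extracts a dividing witness for $\psi(x;b,b')$, contradicting non-dividing of $p$.

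The main obstacle is the second stage: converting the ``alternation'' implicit in non-$M$-invariance into a bona fide first-order dividing witness. This is where NIP is used essentially, and the cleanest route is to invoke (or reprove) Chernikov--Kaplan's broom-lemma style argument, which packages the alternating behaviour of ``$\phi(x;-)\in p$'' along an $M$-indiscernible sequence into an inconsistency that manifests as dividing of a single formula in $p$.
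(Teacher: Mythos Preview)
The paper does not prove this statement: it is recorded as a Fact, and the sentence immediately after it points to \cite{CherKapl} (noting that strict non-forking is used in the proof). So there is no proof in the paper to compare against; what follows are comments on your sketch.

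First a bookkeeping slip: your ``cycle'' $(a)\Rightarrow(b)\Rightarrow\neg(c)\Rightarrow(a)$ is not a cycle. What you actually argue in the second paragraph is the contrapositive of $(b)\Rightarrow(c)$ (you assume $\neg(c)$ and derive $\neg(b)$), so the intended chain is $(a)\Rightarrow(b)\Rightarrow(c)\Rightarrow(a)$.

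More seriously, the invariance step has a genuine gap. From $\tp(b/M)=\tp(b'/M)$ you extract an $M$-indiscernible sequence of pairs $((c_i,c'_i))_{i<\omega}$, each pair $M$-conjugate to $(b,b')$, and then assert that $\psi(x;c_i,c'_i)\in p$ for every $i$. That assertion already presupposes $M$-invariance of $p$, which is precisely what you are proving. Even setting this aside, to conclude that $\psi(x;b,b')$ \emph{divides} you need $\{\psi(x;c_i,c'_i):i<\omega\}$ to be \emph{inconsistent}; but an $M$-indiscernible sequence of \emph{pairs} gives no control over the interleaved sequence $c_0,c'_0,c_1,c'_1,\ldots$, so NIP's bounded-alternation property does not apply. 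The missing ingredient is the reduction to the case where $b$ and $b'$ lie on a \emph{single} $M$-indiscernible sequence $(b_i)_{i<\omega}$ with $b=b_0$, $b'=b_1$; this uses that over a model, equality of types coincides with equality of Lascar strong types. Once you have that, the pairs $(b_{2i},b_{2i+1})$ form an $M$-indiscernible sequence starting at $(b,b')$, and NIP (finite alternation of $\phi$ along the indiscernible $(b_i)$) makes $\{\phi(x;b_{2i})\wedge\neg\phi(x;b_{2i+1}):i<\omega\}$ $k$-inconsistent for some $k$. Hence $\psi(x;b,b')$ divides over $M$ while lying in $p$, the desired contradiction with non-dividing.
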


We recall the notion of strict non-forking from \cite{CherKapl} (which is used in the proof of the above fact). Let $M$ be a model of an NIP theory. A sequence $(b_i)_{i<\omega}$ is \emph{strictly non-forking} over $M$ if for each $i<\omega$, $\tp(b_i/b_{<i}M)$ is strictly non-forking over $M$, which means that it extends to a global type $\tp(b_*/\monster)$ such that both $\tp(b_*/\monster)$ and $\tp(\monster/Mb_*)$ are non-forking over $M$. We will only need to know two facts about strict non-forking sequences:

\smallskip
(Existence) Given $b\in \monster$ and $M\models T$, there is a sequence $b=b_0,b_1,\ldots$ which is strictly non-forking over $M$. We might call such a sequence a \emph{strict Morley sequence} of $\tp(b/M)$.

\smallskip
(Witnessing property) If the formula $\phi(x;b)$ forks over $M$, then for any strictly non-forking sequence $b=b_0,b_1,\ldots$, the type $\{\phi(x;b_i):i<\omega\}$ is inconsistent.

\smallskip
In addition to \cite{CherKapl}, an exposition of those facts can be found in \cite[Chapter 5]{NIPbook}.

\subsection{Commuting types}

The notion of \emph{commuting types} is central in this work, especially in Section \ref{sec_fm}. Two invariant types $p(x)$ and $q(y)$ \emph{commute} if $p(x)\otimes q(y)=q(y)\otimes p(x)$ as global types. By associativity of $\otimes$, if $p$ and $q$ commute, then $p$ commutes with $q^{(n)}$ for $n\leq \omega$.

As usual, we say that two types $p(x),q(y)\in S(N)$ are \emph{weakly orthogonal} if $p(x)\cup q(y)$ defines a complete type in two variables over $N$. If $p$ and $q$ are $M$-invariant types, we say they are \emph{orthogonal} if they are weakly orthogonal as global types. Note that this implies that $p|_N$ and $q|_N$ are weakly orthogonal for any $N$ such that $M\pprec N$.

Of course, if $p$ and $q$ are orthogonal, then they commute. In NIP theories, we can consider commuting as a kind of weak form of orthogonality. This is motivated by the study of distal theories (see \cite{distal}) where in fact two types commute if and only if they are orthogonal (and this can be taken as a definition of distal theories amongst NIP theories). It is also proved in \cite{distal} that two commuting types behave with respect to each other as do types in a stable theory. One instance of this will be recalled in Proposition \ref{prop_finitecofinite} below.

\begin{lemme}\label{lem_commseq}
($T$ is NIP) Let $p,q$ be $M$-invariant types. Assume that $p^{(\omega)}\otimes q^{(\omega)}|_M = q^{(\omega)}\otimes p^{(\omega)}|_M$, then $p$ and $q$ commute.
\end{lemme}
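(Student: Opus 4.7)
My plan is to apply Fact \ref{fact_morley} to the two $M$-invariant types in the pair of variables $(x,y)$, namely $r_1(x,y):=p(x)\otimes q(y)$ and $r_2(x,y):=q(y)\otimes p(x)$. By that fact (which uses NIP), it suffices to prove $r_1^{(\omega)}|_M = r_2^{(\omega)}|_M$, and the cleanest route is to exhibit a single $\omega$-sequence of pairs $((a_i,b_i))_{i<\omega}$ that is simultaneously a Morley sequence over $M$ of $r_1$ and of $r_2$, since any two $\omega$-Morley sequences of the same $M$-invariant type have the same type over $M$.

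The key input is the hypothesis. I would realise $B=(b_i)_{i<\omega}\models q^{(\omega)}|_M$ and then $A=(a_i)_{i<\omega}\models p^{(\omega)}|_{MB}$, so that by construction $(A,B)$ realises $p^{(\omega)}(\bar x)\otimes q^{(\omega)}(\bar y)|_M$. By hypothesis this type coincides with $q^{(\omega)}(\bar y)\otimes p^{(\omega)}(\bar x)|_M$, so the very same pair $(A,B)$ also realises the product taken in the opposite order. Unpacking the two descriptions gives, for every $i<\omega$,
\[
a_i \models p \mid MBa_{<i}\qquad\text{and}\qquad b_i \models q \mid MAb_{<i}.
\]

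Now I check that the interleaved sequence $((a_i,b_i))_{i<\omega}$ is a Morley sequence over $M$ of both $r_1$ and $r_2$. For $r_1=p(x)\otimes q(y)$ I need $b_i\models q\mid M a_{<i} b_{<i}$ and $a_i\models p\mid M a_{<i} b_{\leq i}$, both of which follow from the displayed conditions by restricting parameters (since $M a_{<i}b_{<i}\subseteq MAb_{<i}$ and $M a_{<i}b_{\leq i}\subseteq MBa_{<i}$). The analogous verification for $r_2=q(y)\otimes p(x)$ is the symmetric restriction: $a_i\models p\mid M a_{<i} b_{<i}$ and $b_i\models q\mid M a_{\leq i} b_{<i}$, which again follows at once. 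Hence $\tp((a_i,b_i)_{i<\omega}/M)$ equals both $r_1^{(\omega)}|_M$ and $r_2^{(\omega)}|_M$, so these restrictions coincide, and Fact \ref{fact_morley} gives $r_1=r_2$, i.e.\ $p$ and $q$ commute.

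There is no real obstacle in this plan: once the simultaneous realisation $(A,B)$ has been produced from the hypothesis, the rest is bookkeeping of parameter sets indexed by $i$. The only subtle point is the initial reduction to Fact \ref{fact_morley} for the product types $r_1,r_2$ in the variable $(x,y)$, which is where the NIP assumption is invoked; all the combinatorial work is packed into the observation that $p^{(\omega)}\otimes q^{(\omega)}|_M = q^{(\omega)}\otimes p^{(\omega)}|_M$ forces each $a_i$ to be independent (in the $p$-sense) not only of $a_{<i}$ and the previously chosen $b_j$'s but of the entire sequence $B$, and symmetrically for each $b_i$.
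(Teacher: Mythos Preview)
Your argument is correct. Both proofs rest on the same idea—that $r_1=p\otimes q$ and $r_2=q\otimes p$ have the same Morley sequence over $M$—but they execute it differently. The paper builds a sequence over the monster, alternating between realisations of $r_1$ and $r_2$, and checks inductively (using the hypothesis to swap adjacent blocks of $p$'s and $q$'s) that the sequence is indiscernible over $M$; the contradiction with NIP is then drawn directly from a formula that alternates on it. You instead realise $(A,B)\models p^{(\omega)}\otimes q^{(\omega)}|_M$, observe via the hypothesis that the same pair also realises the product in the opposite order, and read off that the interleaved sequence of pairs is a Morley sequence of both $r_1$ and $r_2$ over $M$; the conclusion then follows by a clean appeal to Fact~\ref{fact_morley}. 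Your route is slightly more economical: it avoids the (only sketched) inductive indiscernibility check and packages the NIP step into the existing fact, at the cost of one extra unpacking step to extract $b_i\models q|_{MAb_{<i}}$ from the second description of $(A,B)$.
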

\begin{proof}
Build a sequence $(a_n,b_n:n<\omega)$ such that $(a_n,b_n)\models p(x)\otimes q(y) | \monster a_{<n}b_{<n}$ when $n$ is even and $(a_n,b_n)\models q(y)\otimes p(x)|\monster a_{<n}b_{<n}$ when $n$ is odd. Using the commutativity hypothesis, one shows by induction that this sequence is indiscernible over $M$. Assume that $p$ and $q$ do not commute: say $p(x)\otimes q(y)\vdash \phi(x,y)$ and $q(y)\otimes p(x)\vdash \neg \phi(x;y)$ for some $\phi(x;y)\in L(\monster)$. Then we have $\models \phi(a_n,b_n) \iff n$ is even, which contradicts NIP.
\end{proof}

Recall that in an NIP theory, a global invariant type $p$ is \emph{generically stable} if it is definable and finitely satisifiable. This is equivalent to saying that $p$ commutes with itself: $p(x_0)\otimes p(x_1)=p(x_1)\otimes p(x_0)$. In fact, a generically stable type commutes with all invariant types. We will not explicitly need this notion in this text, but it is useful to have it in mind.

We recall two results from previous papers (the first one is easy and can constitute an exercise; the second one is more involved).

\begin{lemme}[\cite{invdp} Lemma 2.3]\label{lem_def}
An $M$-invariant type $p$ is definable if and only if for every $M$-finitely satisfiable type $q$, $p\otimes q|_M=q\otimes p|_M$.
\end{lemme}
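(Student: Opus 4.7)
The plan is to treat the two directions separately. For the forward direction, assume $p$ is $M$-definable with $d\phi(y)\in L(M)$ defining $\phi(x,y)\in L$ in $p$. Realizing $(p\otimes q)|_M$ by $b\models q|_M$ and $a\models p|_{Mb}$ gives $\models\phi(a,b)\iff p\vdash\phi(x,b)\iff\models d\phi(b)$, so $\phi\in(p\otimes q)|_M\iff d\phi\in q|_M$. Realizing $(q\otimes p)|_M$ by $a\models p|_M$ and $b\models q|_{Ma}$, the ultrafilter $\mathcal{U}_q$ on $M$ associated with the $M$-finitely satisfiable $q$ gives $\models\phi(a,b)\iff\{m\in M:\models\phi(a,m)\}\in\mathcal{U}_q$; but for $m\in M$ the formula $\phi(x,m)$ is in $L(M)$ and $a\models p|_M$, so $\models\phi(a,m)\iff\models d\phi(m)$, the set is $d\phi(M)$, and one obtains the same criterion $d\phi\in q|_M$.

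For the reverse direction, assume the commutation hypothesis and suppose for contradiction that $p$ is not $M$-definable; fix $\phi(x,y)\in L$ with no $L(M)$-definition in $p$ and write $D=\{b\in\monster:p\vdash\phi(x,b)\}$, an $M$-invariant set with a well-defined trace $D|_M\subseteq S_y(M)$. The forward computation (which does not use definability on the $q\otimes p$ side) still shows, for any $r\in S_y(M)$ and any $M$-finitely satisfiable $q$ with $q|_M=r$, that $\phi\in(p\otimes q)|_M\iff r\in D|_M$ and $\phi\in(q\otimes p)|_M\iff D\cap M\in\mathcal{U}_q$. Commutation forces these to agree for all such $q$, so by the standard criterion for a set to lie in every (or in no) ultrafilter extending the filter $\{\chi(M):\chi\in r\}$, each $r$ admits a formula $\chi_r\in r$ with $\chi_r(M)\subseteq D$ if $r\in D|_M$, and $\chi_r(M)\subseteq M\setminus D$ otherwise.

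The final step is to upgrade this pointwise witness to clopenness of $D|_M$: I claim $[\chi_r]\subseteq D|_M$ whenever $\chi_r(M)\subseteq D$. Were some $r'\in[\chi_r]\setminus D|_M$ to exist, then $\chi_{r'}\in r'$ would satisfy $\chi_{r'}(M)\subseteq M\setminus D$, and $\chi_r\wedge\chi_{r'}\in r'$ would be a consistent $L(M)$-formula, hence realised in the model $M$ by some $m$, forcing $m\in D\cap(M\setminus D)=\emptyset$. The symmetric argument handles the complementary case, so $D|_M$ is both open and closed in $S_y(M)$; by compactness $D|_M=[d\phi]$ for some $d\phi\in L(M)$, and $M$-invariance of $D$ upgrades this to $D=d\phi(\monster)$, contradicting non-definability. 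The main obstacle is that $D|_M$ is not \emph{a priori} determined by its intersection $D\cap M$ with the realised types, so the hypothesis is needed to hold for \emph{all} $M$-finitely satisfiable $q$ (not just one attached to each $r$) in order for the filter-extension criterion to yield a local $L(M)$-witness at every point.
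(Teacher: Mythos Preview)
The paper does not prove this lemma; it cites \cite{invdp} and remarks that the result ``is easy and can constitute an exercise.'' Your argument is a correct solution to that exercise.

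Two small comments. In the forward direction the ultrafilter is unnecessary: once you observe that $\phi(a,M)=d\phi(M)$ for $a\models p|_M$, finite satisfiability of $q$ in $M$ immediately gives $q\vdash \phi(a,y)\leftrightarrow d\phi(y)$, and the conclusion follows. In the reverse direction your filter/ultrafilter bookkeeping is correct, but it can be recast more cleanly in topological language: the restriction map $\pi$ from the space of global $M$-finitely satisfiable $y$-types onto $S_y(M)$ is a continuous surjection of compact Hausdorff spaces, hence closed; the commutation hypothesis says exactly that the clopen set $\{q:q\vdash\phi(a,y)\}$ equals $\pi^{-1}(D|_M)$, so both $D|_M$ and its complement are images of closed sets under $\pi$, hence closed. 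This is the same argument with less unpacking, but what you wrote is fine.
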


\begin{Prop}[\cite{distal} Corollary 3.24]\label{prop_finitecofinite}
Let $M\pprec N$ and let $p$ and $q$ be two global $M$-invariant types which commute. Construct $I=(a_i:i<\omega)\models p^{(\omega)}|_N$ and $b\models q|_N$, then $\lim(I/Nb)=p|_{Nb}$.
\end{Prop}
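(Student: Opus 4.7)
The plan is to apply commutativity of $p$ and $q$ to reinterpret $I$ as a genuine Morley sequence of $p$ over $Nb$, from which the conclusion is immediate. The key intermediate fact is that $p$ commuting with $q$ extends to $p^{(\omega)}$ commuting with $q$.

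First, I would verify that $p^{(\omega)} \otimes q = q \otimes p^{(\omega)}$ as $M$-invariant types. This is the remark stated right after the definition of commuting types, and follows by induction on $n$ using associativity of $\otimes$ together with the hypothesis $p \otimes q = q \otimes p$: writing $p^{(n)} = p(x_{n-1}) \otimes p^{(n-1)}$, one has
$$p^{(n)} \otimes q \;=\; p(x_{n-1}) \otimes p^{(n-1)} \otimes q \;=\; p(x_{n-1}) \otimes q \otimes p^{(n-1)} \;=\; q \otimes p(x_{n-1}) \otimes p^{(n-1)} \;=\; q \otimes p^{(n)},$$
and one passes to the union for $n = \omega$.

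Next, I would read the construction in the statement as ``first realise $I \models p^{(\omega)}|_N$, then $b$ over $NI$'', so that $(I,b) \models q(y) \otimes p^{(\omega)}(x_0,x_1,\ldots)|_N$. By the commutation just established, $(I,b) \models p^{(\omega)} \otimes q|_N$ as well. Unpacking the definition of this product (where the $y$-variable is realised first, then the $x$-variables over $Nb$) gives $b \models q|_N$ together with $I \models p^{(\omega)}|_{Nb}$. In other words, $I$ is in fact a Morley sequence of $p$ over $Nb$, hence $Nb$-indiscernible with each $a_i \models p|_{Nb}$. Every formula in $p|_{Nb}$ is therefore satisfied by every member of $I$, so $p|_{Nb} \subseteq \lim(I/Nb)$; completeness of $p|_{Nb}$ forces equality.

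The only delicate point is bookkeeping the convention for $\otimes$ (in $p(x) \otimes q(y)$ the $y$-variable is realised first), so that the commutation one establishes is precisely what is needed to flip the natural ``$I$ first, then $b$'' construction into the ``$b$ first, then $I$'' construction. Once that is in place, the argument is essentially an immediate consequence of commutativity, with no further use of NIP.
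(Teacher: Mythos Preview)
Your argument has a genuine gap: you have misread the hypothesis. The statement only asks that $b\models q|_N$, not that $b\models q|_{NI}$. Nothing in the proposition says $b$ is constructed \emph{after} $I$; the two are realised independently over $N$, and their joint type over $N$ is not specified. So your sentence ``I would read the construction \ldots\ so that $(I,b)\models q(y)\otimes p^{(\omega)}|_N$'' is an unjustified strengthening of the hypothesis. Under the actual hypothesis you cannot conclude that $I$ is a Morley sequence of $p$ over $Nb$, and the rest of your argument collapses.

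This distinction is exactly what makes the proposition non-trivial (the paper flags it as ``more involved'' and cites it from \cite{distal} rather than proving it). If one assumed $b\models q|_{NI}$, then, as you observe, commutativity immediately gives $I\models p^{(\omega)}|_{Nb}$ and the conclusion is a triviality---no need for $M\pprec N$ or NIP. The content of the proposition is that the same conclusion holds for \emph{any} $b\models q|_N$, regardless of how $b$ sits relative to $I$. The application later in the paper (the proposition showing that if $q$ commutes with $F_M(p)$ then it commutes with $p$) uses precisely this strength: there the element playing the role of $b$ realises the relevant type over $N$, but its type over $N$ together with the Morley sequence is not controlled.
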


\subsection{Directionality}

In \cite{Sh946}, Kaplan and Shelah classify NIP theories depending on the number of global coheirs a type can have.

A theory $T$ is said to be of \emph{small directionality} if, given a model $M$ and $p\in S(M)$, then for any finite set $\Delta$ of formulas the global coheirs of $p$ determine only finitely many $\Delta$-types. In particular, $p$ has at most $2^{|T|}$ global coheirs. The theory $T$ is of \emph{medium directionality} if it is not of small directionality and if the global coheirs of every such $p$ determine at most $|M|$ $\Delta$-types (and thus $p$ has at most $|M|^{|T|}$ coheirs). Finally, $T$ has \emph{large directionality} if it is NIP, but has neither small nor medium directionality.

\section{$(p,q)$-theorems}\label{sec_pq}

Let $X$ be a set (finite or infinite) and $\mathcal S$ a family of subsets of $X$. Such a pair $(X,\mathcal S)$ is called a set system. We say that the family $\mathcal S$ \emph{shatters} a subset $A\subseteq X$ if for every $A'\subseteq A$, there is a set $S$ in $\mathcal S$ such that $S\cap A=A'$. In other words, the family $\mathcal S$ when restricted to $A$ is the full power set of $A$.

The family $\mathcal S$ has VC-dimension at most $n$ (written $\VC(\mathcal S)\leq n$), if there is no $A\subseteq X$ of cardinality $n+1$ such that $\mathcal S$ shatters $A$. We say that $\mathcal S$ is of VC-dimension $n$ if it is of VC-dimension at most $n$ and shatters some subset of size $n$.

If for each $n$ we can find a subset of $X$ of cardinality $n$ shattered by $\mathcal S$, then we say that $\mathcal S$ has infinite VC-dimension (and write $\VC(\mathcal S)=\infty$).

Given a set system $(X,\mathcal S)$, we define the \emph{dual} set system as the set system $(X^*,\mathcal S^*)$, where $X^*=\mathcal S$ and $\mathcal S^*=\{\mathcal S_a : a\in X\}$ with $\mathcal S_a = \{S\in \mathcal S : a\in S\}$. We then define the dual VC-dimension of $\mathcal S$ (written $\VC^*(\mathcal S)$) as the VC-dimension of $\mathcal S^*$.

If $\phi(x;y)$ is a formula, then $VC(\phi)$ denotes the VC-dimension of the set $\{\phi(\monster;b):b\in \monster\}$. Similarly, $VC^*(\phi)$ denotes the dual VC-dimension of $\{\phi(\monster;b):b\in \monster\}$, or equivalently, the VC-dimension of $\{\phi(a;\monster):a\in \monster\}$.

A formula $\phi(x;y)$ has finite VC-dimension if and only if it has finite dual VC-dimension if and only if it is NIP. For more on this, see \cite[Chapter 6]{NIPbook}.

\smallskip
Let $p\geq q$ be two integers. A family $\mathcal S$ of some set $X$ has the $(p,q)$-property if out of every $p$ sets of $\mathcal S$, some $q$ have non-empty intersection.

\begin{fait}[$(p,q)$-theorem, \cite{pq}]\label{fact_pq}
\index{pqth@$(p,q)$-theorem}
Let $p\geq q$ be two integers. Then there is an integer $N$ such that the following holds:

Let $(X,\mathcal S)$ be a \emph{finite} set system where every $S\in \mathcal S$ is non-empty. Assume:

\quad $\cdot$ $\VC^*(\mathcal S)< q$;

\quad $\cdot$ $(X,\mathcal S)$ has the $(p,q)$-property.

Then there is a subset of $X$ of size $N$ which intersects every element of $\mathcal S$.
\end{fait}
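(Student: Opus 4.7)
The plan is to follow the classical Alon--Kleitman--Matou\v sek strategy, which reduces Fact~\ref{fact_pq} to two deep ingredients: a \emph{fractional Helly theorem} for set systems of bounded dual VC-dimension, and the \emph{$\epsilon$-net theorem} for set systems of bounded primal VC-dimension. Schematically the argument has three steps: (a)~use the $(p,q)$-property to show that a positive fraction of the $q$-element subfamilies of $\mathcal S$ have non-empty intersection; (b)~apply fractional Helly to extract a single point of $X$ piercing a positive proportion of $\mathcal S$; (c)~bootstrap this via LP duality and the $\epsilon$-net theorem to obtain a transversal whose size $N$ depends only on $p$ and $q$.

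For step~(a), a double counting suffices. Every $p$-subfamily of $\mathcal S$ contains, by hypothesis, at least one $q$-subfamily with non-empty intersection, while each such $q$-subfamily sits in at most $\binom{|\mathcal S|-q}{p-q}$ many $p$-subfamilies. Dividing these counts produces a fraction $\alpha = \alpha(p,q) > 0$ of $q$-tuples which are consistent.

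Step~(b) is the heart of the proof. Matou\v sek's fractional Helly theorem for families with $\VC^*(\mathcal S) < q$ states that if an $\alpha$-fraction of $q$-tuples from $\mathcal S$ have a common point, then some element of $X$ lies in at least $\beta(\alpha,q)\cdot |\mathcal S|$ members of $\mathcal S$. This is proved via the first selection lemma for families of bounded dual shatter function, the VC bound being used essentially to control the number of realisable intersection patterns and rule out pathological concentration. I expect this to be the main obstacle to write out in detail, since it is the only point where the VC hypothesis does real work, beyond bookkeeping.

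Finally, for step~(c), apply the conclusion of (b) to every finite weighted subsystem of $\mathcal S$. Standard LP duality then yields a probability measure $\mu$ on $X$ with $\mu(S) \geq \beta$ for every $S \in \mathcal S$. Since bounded dual VC-dimension implies bounded primal VC-dimension (as recalled just before the statement), the $\epsilon$-net theorem applied to $\mu$ with $\epsilon = \beta$ produces a set of size at most $N = N(\beta, \VC(\mathcal S))$ meeting every member of $\mathcal S$. The constants $\alpha$, $\beta$ and hence $N$ depend only on $p$, $q$, giving the required transversal.
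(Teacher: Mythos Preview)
The paper does not actually prove Fact~\ref{fact_pq}; it is stated as a fact from the combinatorics literature with a citation to Matou\v sek~\cite{pq}, and the surrounding text only remarks on its history (Alon--Kleitman for convex sets, Matou\v sek for bounded VC-dimension, with the $p=q$ case exposed in \cite[Chapter~6]{NIPbook}). So there is no proof in the paper to compare against.

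That said, your sketch is precisely the Alon--Kleitman--Matou\v sek argument that the paper is citing: the double-counting in step~(a), Matou\v sek's fractional Helly for bounded dual VC-dimension in step~(b), and the LP duality plus $\epsilon$-net bootstrap in step~(c) are exactly the ingredients of the proof in \cite{pq}. Your identification of step~(b) as the place where the VC hypothesis does the real work is accurate, and your use of the primal/dual VC-dimension relationship in step~(c) is the standard move. If anything, you might make explicit that in step~(c) one applies step~(b) to the weighted system (with multiplicities given by the dual LP solution) to get the fractional transversal before invoking $\epsilon$-nets; but this is implicit in what you wrote. In short: your proposal is correct and is the proof the paper is referring the reader to.
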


The first $(p,q)$-type theorem was proved by Alon and Kleitman \cite{pq_conv} for families of convex subsets of $\mathbb R^n$ (where in the statement $VC^*(\mathcal S)$ is replaced by the dimension $n$). Then Matou\u sek \cite{pq} showed how to adapt the proof to the case of families of finite VC-dimension. The proof in the special case where $p=q$ is also exposed in \cite[Chapter 6]{NIPbook}.

The following lemma and corollary are well-known.

\begin{lemme}\label{lem_low}
Let $\phi(x;y)$ be a formula of dual VC-dimension $q_0$ and $(b_i:i<\omega)$ an indiscernible sequence of $|y|$-tuples. Assume that the partial type $\{\phi(x;b_i):i<\omega\}$ is $(q_0+1)$-consistent, then it is consistent.
\end{lemme}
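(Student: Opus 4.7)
\medskip
\noindent\textbf{Proof plan for Lemma \ref{lem_low}.}
I will argue by contradiction, aiming at shattering a $(q_0+1)$-element subset of $\{b_i : i<\omega\}$, which would contradict the dual VC-dimension bound. So assume that $\{\phi(x;b_i):i<\omega\}$ is inconsistent. By compactness some finite subset is inconsistent, and by indiscernibility and minimality I may pick the smallest $k$ such that $\{\phi(x;b_{i_1}),\ldots,\phi(x;b_{i_k})\}$ is inconsistent for one (equivalently, by indiscernibility, every) strictly increasing $k$-tuple $i_1<\cdots<i_k$. The $(q_0+1)$-consistency hypothesis forces $k\geq q_0+2$, and by minimality every $(k-1)$-subtype is consistent.

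The pivotal observation is the following: if $J\subseteq \omega$ has size $k-1$ and $a$ realises $\{\phi(x;b_j):j\in J\}$, then $\neg\phi(a;b_{j'})$ holds for \emph{every} $j'\notin J$. Indeed, otherwise $\{\phi(x;b_j):j\in J\cup\{j'\}\}$ would be consistent, contradicting the inconsistency of every $k$-subtype. Thus the $(k-1)$-subsets of $\omega$ give us realisations with prescribed exact patterns, not just positive ones.

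Now I shatter the set $\{b_0,b_1,\ldots,b_{q_0}\}$. For each $S\subseteq\{0,\ldots,q_0\}$ define
\[
J_S \;=\; S \;\cup\; \{\,q_0+1,\,q_0+2,\,\ldots,\,q_0+k-|S|\,\},
\]
which has cardinality $k-1$ (the padding is well-defined since $|S|\leq q_0+1\leq k-1$, and disjoint from $\{0,\ldots,q_0\}$ since it starts at $q_0+1$). Let $a_S$ realise $\{\phi(x;b_j):j\in J_S\}$. By the pivotal observation, $\phi(a_S;b_j)\leftrightarrow j\in J_S$ for all $j<\omega$; restricting $j$ to $\{0,\ldots,q_0\}$ this becomes $\phi(a_S;b_j)\leftrightarrow j\in S$. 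So $\{b_0,\ldots,b_{q_0}\}$ is shattered by $\{\phi(a;\monster):a\in\monster\}$, contradicting $\VC^*(\phi)=q_0$.

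The only subtlety I anticipate is making sure the indiscernibility step really does promote a single inconsistent $k$-tuple to \emph{all} $k$-tuples, and similarly that every $(k-1)$-subset is uniformly consistent; this is a direct use of indiscernibility plus the minimality of $k$. Beyond that, the argument is essentially just carefully bookkeeping indices to separate the ``data'' coordinates $\{0,\ldots,q_0\}$ from the ``padding'' coordinates used to reach size $k-1$.
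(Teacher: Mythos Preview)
Your argument is correct apart from an off-by-one slip in the definition of $J_S$: as written, $J_S = S \cup \{q_0+1,\ldots,q_0+k-|S|\}$ has cardinality $|S|+(k-|S|)=k$, not $k-1$, so $\{\phi(x;b_j):j\in J_S\}$ would be inconsistent and no $a_S$ could exist. The padding should stop at $q_0+(k-1-|S|)$; with that correction everything goes through.

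Your route differs from the paper's. The paper extends the indiscernible sequence to one indexed by $\omega^2$, fixes a \emph{single} element $a_*$ realising $\phi(x;b_{\omega k})$ for $1\leq k\leq q_0+1$, observes that $q$-inconsistency forces infinitely many indices $l$ with $\neg\phi(a_*;b_l)$ in each block $[\omega k,\omega(k+1))$, and then for each $B_0\subseteq\{b_0,\ldots,b_{q_0}\}$ builds an increasing map $\eta$ hitting either $\omega k$ or a negative index in that block; indiscernibility promotes $\eta$ to an automorphism, and $\eta^{-1}(a_*)$ cuts out $B_0$. So all $2^{q_0+1}$ traces come from automorphic images of one witness. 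By contrast you never extend the sequence nor invoke automorphisms: you pin down the minimal inconsistency size $k$, extract from it the exact-pattern property of $(k-1)$-realisations, and manufacture a separate witness $a_S$ for each subset. Your version is more elementary and uses indiscernibility only to make consistency of a tuple depend solely on its size; the paper's version avoids the minimal-$k$ bookkeeping at the cost of the $\omega^2$ extension and the automorphism step.
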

\begin{proof}
Assume that the partial type $\{\phi(x;b_i):i<\omega\}$ is $(q_0+1)$-consistent, but $q$-inconsistent for some $q\geq q_0+2$. First increase the sequence $(b_i:i<\omega)$ to an indiscernible sequence $(b_i:i<\omega^2)$. Let $B=\{b_i:i<q_0+1\}$. We will show that $B$ is shattered by the dual family $\{\phi(a;\monster):a\in \monster\}$ thus contradicting the definition of the dual VC-dimension.

By hypothesis, there is some $a_*$ satisfying $\bigwedge_{1\leq k\leq q_0+1} \phi(x;b_{\omega k})$. Let $A$ be the set of indices $l\in \omega^2$ for which $a_*\models \neg \phi(x;b_l)$. By the $q$-inconsistency hypothesis, for every $k$, there are infinitely many elements of $A$ in the interval $\omega k\leq x< \omega (k+1)$. Now fix any $B_0\subseteq B$ and let $\eta:q_0+1 \to \omega^2$ be an increasing map such that $\eta(k)=\omega k$ if $b_k\in B_0$ and $\eta(k)\in A$ otherwise. By indiscernibility, the map $\eta$ extends to an automorphism of $\monster$ which we still call $\eta$. Then, for any $b\in B$, we have $\eta^{-1}(a_*)\models \phi(x;b) \iff b\in B_0$.
\end{proof}

\begin{cor}\label{cor_low}
Let $\phi(x;y)$ be an NIP formula and $M$ a model. Then the set $\{b\in \monster: \phi(x;b)$ does not divide over $M\}$ is open over $M$.
\end{cor}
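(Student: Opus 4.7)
The plan is to show that the complementary set $D := \{b \in \monster : \phi(x;b)\text{ divides over }M\}$ is type-definable over $M$, i.e., closed over $M$, by exhibiting an explicit defining partial type.

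Set $q_0 := \VC^*(\phi)$, which is finite by NIP. The key preliminary observation via Lemma~\ref{lem_low} is that for any $M$-indiscernible sequence $(b_i)_{i<\omega}$, the partial type $\{\phi(x;b_i):i<\omega\}$ is inconsistent if and only if the single $L$-formula $\theta(b_0,\ldots,b_{q_0}) := \neg \exists x \bigwedge_{i<q_0+1} \phi(x;b_i)$ holds. Without this reduction, "inconsistency" would be a disjunction over choices of finite subfamily, which is not directly expressible as a single partial type.

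Next I would form the partial type $\Sigma(y_0,y_1,\ldots)$ over $M$ consisting of all indiscernibility schema for $(y_i)_{i<\omega}$ over $M$ together with $\theta(y_0,\ldots,y_{q_0})$. By the standard characterisation of dividing combined with the previous paragraph, $\phi(x;b)$ divides over $M$ if and only if $\Sigma(b,y_1,y_2,\ldots)$ is consistent. By compactness, this consistency is equivalent to $\monster$ satisfying, for every finite $\Sigma_0(y_0,\ldots,y_n)\subseteq \Sigma$, the $L(M)$-formula $\psi_{\Sigma_0}(y_0) := \exists y_1\cdots y_n\, \Sigma_0(y_0,\ldots,y_n)$. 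Hence $D$ is defined over $M$ by the partial type $\{\psi_{\Sigma_0}: \Sigma_0 \subseteq \Sigma \text{ finite}\}$, so its complement is open over $M$.

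I do not expect any real obstacle; the one essential ingredient is Lemma~\ref{lem_low}, which converts the \emph{a priori} disjunctive inconsistency condition into the single formula $\theta$, after which the conclusion is a routine compactness packaging.
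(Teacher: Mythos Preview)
Your proof is correct and follows essentially the same approach as the paper: both use Lemma~\ref{lem_low} to replace ``inconsistent'' by ``$(q_0+1)$-inconsistent'' and then invoke compactness. The only cosmetic difference is that the paper argues directly on the open side (for $b$ with $\phi(x;b)$ non-dividing, compactness yields a single $\psi(y)\in\tp(b/M)$ witnessing this), whereas you write out the closed complement explicitly as the partial type $\{\psi_{\Sigma_0}\}$; this is the same compactness argument viewed contrapositively.
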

\begin{proof}
Let $q_0$ be the dual VC-dimension of $\phi(x;y)$.
Let $b\in \monster$. If $\phi(x;b)$ does not divide over $M$, then there is no $M$-indiscernible sequence $(b_i:i<\omega)$ with $b_0=b$ which is $(q_0+1)$-inconsistent. By compactness, there is some formula $\psi(y)\in \tp(b/M)$ which ensures this and then $\phi(x;b')$ does not divide over $M$ for any $b'\models \psi(y)$.
\end{proof}

Let $\phi(x;y)$ and $\psi(y)$ be two formulas over some model $M$. Consider the family $\mathcal S_{\phi,\psi}$=\{$\phi(M;b)$: $b\in \psi(M)$\} of subsets of $M$.

\begin{lemme}\label{lem_pqdef}
The following are equivalent:

$\bullet_1$ The family $\mathcal S_{\phi,\psi}$ has the $(p,q)$-property for some $p\geq q> VC^*(\phi)$;

$\bullet_2$ For every $q$, there is $p\geq q$ such that the family $\mathcal S_{\phi,\psi}$ has the $(p,q)$-property;

$\bullet_3$ For any $b\in \psi(\monster)$, the formula $\phi(x;b)$ does not divide over $M$.
\end{lemme}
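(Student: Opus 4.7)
The plan is to prove the cycle $\bullet_2 \Rightarrow \bullet_1 \Rightarrow \bullet_3 \Rightarrow \bullet_2$. The first implication is trivial: in $\bullet_2$ just pick any $q > \VC^*(\phi)$.

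For $\bullet_1 \Rightarrow \bullet_3$ I argue by contrapositive. Set $q_0 = \VC^*(\phi)$, so $q \geq q_0+1$. Suppose $\phi(x;b)$ divides over $M$ for some $b \in \psi(\monster)$, witnessed by an $M$-indiscernible sequence $(c_i : i<\omega)$ with $c_0 = b$ along which $\{\phi(x;c_i)\}$ is inconsistent. By the contrapositive of Lemma \ref{lem_low}, the type is already $(q_0+1)$-inconsistent, and since $\psi \in L(M)$, indiscernibility forces $c_i \in \psi(\monster)$ for all $i$. Consequently the $L(M)$-sentence asserting the existence of $p$ tuples $y_1, \ldots, y_p$ in $\psi$ such that no $q_0+1$ of the formulas $\phi(x;y_j)$ have a common realisation holds in $\monster$, hence in $M$ by elementarity. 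This produces $p$ elements of $\psi(M)$ among which no $q_0+1$ — and a fortiori no $q$ — of the sets $\phi(M;y_j)$ intersect, contradicting the $(p,q)$-property.

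For $\bullet_3 \Rightarrow \bullet_2$, contrapositive again. If for some $q$ the $(p,q)$-property fails for every $p \geq q$, then the $L(M)$-type $\{\psi(y_i) : i<\omega\} \cup \{\neg \exists x \bigwedge_{j\in J}\phi(x;y_j) : J \subseteq \omega,\ |J|=q\}$ in variables $(y_i)_{i<\omega}$ is finitely satisfiable — each finite fragment is realised inside $\psi(M)$ by a bad family, using that $M \prec \monster$ transfers inconsistency of conjunctions from $M$ to $\monster$ — and hence consistent. Any realisation $(b_i:i<\omega)$ lies in $\psi(\monster)$ with $\{\phi(x;b_i)\}$ being $q$-inconsistent. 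Extracting an $M$-indiscernible sequence with the same EM-type preserves both $\psi$ and $q$-inconsistency and witnesses that $\phi(x;b_0)$ divides over $M$ with $b_0 \in \psi(\monster)$, contradicting $\bullet_3$.

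The only delicate point is the quantitative match-up around $\VC^*(\phi)$: the hypothesis $q > \VC^*(\phi)$ in $\bullet_1$ is precisely what lets Lemma \ref{lem_low} upgrade plain inconsistency of a dividing witness to $(q_0+1)$-inconsistency, which is what a failure of the $(p,q)$-property demands on the combinatorial side. Corollary \ref{cor_low} is not needed, and the passage between indiscernible sequences in $\monster$ and concrete finite families in $M$ is handled by routine elementarity together with Ramsey and compactness.
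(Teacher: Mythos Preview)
Your proof is correct and follows essentially the same route as the paper: the same cycle $\bullet_2 \Rightarrow \bullet_1 \Rightarrow \bullet_3 \Rightarrow \bullet_2$, with Lemma~\ref{lem_low} supplying the $(q_0+1)$-inconsistency in the $\bullet_1 \Rightarrow \bullet_3$ step and Ramsey/compactness handling $\bullet_3 \Rightarrow \bullet_2$. The only difference is expository: you make the elementarity transfer between $\psi(\monster)$ and $\psi(M)$ explicit in $\bullet_1 \Rightarrow \bullet_3$, whereas the paper leaves it implicit when it says ``the $p$-point set $(b_i:i<p)$ contradicts the $(p,q)$-property''.
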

\begin{proof}
$3\Rightarrow 2$: Assume the third bullet and pick some $q$. Then we cannot find an $M$-indiscernible $(b_i:i<\omega)$ of elements of $\psi(\monster)$ such that $\{\phi(x;b_i):i<\omega\}$ is $q$-inconsistent. Hence by compactness and Ramsey, for some $p$, the family $\{\phi(\monster;b):b\in \psi(\monster)\}$ has the $(p,q)$-property. This is a first order statement so we can replace $\monster$ with any model, in particular $M$. This implies that $\mathcal S_{\phi,\psi}$ has the $(p,q)$-property.

$2\Rightarrow 1$: Trivial.

$1\Rightarrow 3$: Assume that the first bullet holds and let $(p,q)$ be given by it. Let also $q_0$ be the dual VC-dimension of $\phi(x;y)$. If there is some $M$-indiscernible sequence $(b_i:i<\omega)$ in $\psi(\monster)$ such that $\{\phi(x;b_i):i<\omega\}$ is inconsistent, then by Lemma \ref{lem_low}, it is already $(q_0+1)$-inconsistent and in particular $q$-inconsistent. Then the $p$-point set $(b_i:i<p)$ contradicts the $(p,q)$-property. This shows that no formula $\phi(x;b)$, $b\in \psi(\monster)$ divides over $M$.
\end{proof}

As was already observed in \cite{ExtDef2}, the $(p,q)$-theorem has the following model-theoretic consequence.

\begin{Prop}\label{prop_corpq}
Fix a model $M$ of $T$ and let $\phi(x;y), \psi(y)$ be two formulas such that whenever $b\in \psi(\monster)$, $\phi(x;b)$ does not divide over $M$. Assume that $\phi(x;y)$ is NIP. Then there are finitely many global types $p_0,\ldots,p_{N-1}\in S_x(\monster)$ such that, for any $b\in \psi(\monster)$, $\phi(x;b)$ is in one of them.
\end{Prop}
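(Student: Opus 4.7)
The plan is to run the standard pattern: turn the non-dividing hypothesis into a combinatorial $(p,q)$-condition, invoke Fact \ref{fact_pq} to get uniform piercing of finite subfamilies, and then pull $N$ global types out by compactness.

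Concretely, I would first set $q = VC^*(\phi) + 1$. Lemma \ref{lem_pqdef} (direction $3 \Rightarrow 2$) then produces some $p \geq q$ such that the family $\mathcal S_{\phi,\psi} = \{\phi(M;b) : b \in \psi(M)\}$ has the $(p,q)$-property. For fixed $p$ and $q$ this property is expressed by a single first-order sentence over $M$, so by elementarity the family $\{\phi(\monster;b) : b \in \psi(\monster)\}$ also satisfies the $(p,q)$-property inside $\monster$. Since $\phi$ is NIP, its dual VC-dimension is still strictly less than $q$. Given any finite subfamily $\{\phi(\monster;b_j):j<n\}$, I would restrict to a finite $X \subseteq \monster$ containing a chosen common point for every $q$-subfamily whose intersection is non-empty; then $(X,\{\phi(\monster;b_j)\cap X : j<n\})$ is a finite set system which inherits both the $(p,q)$-property and the $VC^* < q$ bound, so Fact \ref{fact_pq} yields an integer $N$, independent of the chosen finite subfamily, and a piercing of size $N$ inside $\monster$.

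Compactness now packages these local piercings into one global piercing. I would consider the partial type
\[
\Sigma(x_0,\ldots,x_{N-1}) \;=\; \bigl\{\phi(x_0;b)\vee\cdots\vee\phi(x_{N-1};b) \,:\, b \in \psi(\monster)\bigr\}
\]
over $\monster$: every finite fragment is realised in $\monster$ by the previous paragraph, so $\Sigma$ is consistent. Realise it by some tuple $(a_0,\ldots,a_{N-1})$ in $\monster^* \succ \monster$, and set $p_i = \tp(a_i/\monster)$. These are global types over $\monster$, and by construction each $\phi(x;b)$ with $b \in \psi(\monster)$ lies in at least one of them.

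The only step that needs real care is the transition from the finite-system formulation of the $(p,q)$-theorem to the infinite family of $\monster$-definable sets; everything else (Lemma \ref{lem_pqdef}, elementarity, compactness) is immediate. Both the $(p,q)$-property and the $VC^*$ bound are clearly downward-closed under restricting the ambient set, which is what makes the witness-set trick go through, but this is the one technical point I would want to write out carefully rather than assert.
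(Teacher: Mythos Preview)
Your argument is correct and essentially identical to the paper's: both invoke Lemma \ref{lem_pqdef} to obtain the $(p,q)$-property, take $N$ from Fact \ref{fact_pq}, and then finish by compactness on the partial type $\{\bigvee_{i<N}\phi(x_i;b):b\in\psi(\monster)\}$. The only difference is cosmetic: the paper states the $(p,q)$-property directly over $\monster$ (this is already established inside the proof of Lemma \ref{lem_pqdef}) and leaves the passage from the finite $(p,q)$-theorem to finite subfamilies of the infinite system implicit, whereas you spell out the elementarity step and the restriction-to-a-finite-witness-set trick---a harmless elaboration of what the paper takes for granted.
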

\begin{proof}
Let $q> VC^*(\phi)$. By the previous lemma, there is $p$ such that the family $\{\phi(\monster;b):b\in \psi(\monster)\}$ has the $(p,q)$-property. Let $N$ be given by applying Fact \ref{fact_pq} to this pair $(p,q)$. Consider the partial type $$q(x_0,\ldots,x_{N-1})=\left\{\bigvee_{i<N} \phi(x_i;b) : b\in \psi(\monster)\right\}.$$

By construction of $N$, every finite subset of $q(x_0,\ldots,x_{N-1})$ is consistent. Hence the whole type is consistent and we obtain what we want.
\end{proof}

Our aim now is to give a model theoretic proof of this proposition.
Note that we do not say in the statement that the integer $N$ depends only on $p$, $q$ and $\phi(x;y)$, although it follows from the proof. The reason is that we will not manage to give a model-theoretic proof of that. The reader should convince herself that  the uniformity of $N$ does not follow by a simple compactness argument. The same problem appears in the proof of uniformity of honest definitions \cite{ExtDef2} and was in fact solved using the $(p,q)$-theorem.

\begin{Prop}\label{prop_pqequiv}
(Not using the $(p,q)$-theorem) Proposition \ref{prop_corpq} is equivalent to the following statement:

$(*)\quad $ Let $q\in S_y(M)$ and let $\tilde q$ be a global coheir of $q$. Assume that for $b\models q$, $\phi(x;b)$ does not divide over $M$. Then there is some $a\in \monster$ such that $\tilde q\vdash \phi(a;y)$.
\end{Prop}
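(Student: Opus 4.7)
The plan is to prove the two implications.

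For Proposition \ref{prop_corpq} $\Rightarrow (*)$: given $q\in S_y(M)$ and a coheir $\tilde q$ satisfying the hypothesis of $(*)$, Corollary \ref{cor_low} combined with compactness yields a single formula $\psi(y)\in q$ such that $\phi(x;b)$ does not divide over $M$ for every $b\models\psi$. Applying Proposition \ref{prop_corpq} to $(\phi,\psi)$ and realizing the resulting $N$-variable partial type inside $\monster$ produces $a_0,\ldots,a_{N-1}\in \monster$ such that $\monster\models \forall y(\psi(y)\to \bigvee_i \phi(a_i;y))$. Since $\tilde q$ is complete and contains $\psi$, some $\phi(a_i;y)$ must lie in $\tilde q$, giving $(*)$.

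For the converse $(*) \Rightarrow$ Proposition \ref{prop_corpq}, I would use two nested compactness arguments. Put $Q:=\{q\in S_y(M):q\vdash\psi\}$, clopen (hence compact) in $S_y(M)$; the non-dividing hypothesis of Proposition \ref{prop_corpq} is exactly what guarantees that every $q\in Q$ satisfies the hypothesis of $(*)$. For each $q\in Q$ the space of global coheirs of $q$ is closed, hence compact, in $S_y(\monster)$; by $(*)$ each such coheir contains some $\phi(a;y)$ with $a\in \monster$, so compactness provides a finite $A_q\subset \monster$ covering all coheirs of $q$. Equivalently, $q(y)\cup\{\neg\phi(a;y):a\in A_q\}$ is not finitely satisfiable in $M$, which gives a formula $\theta_q(y)\in q$ with no realisation in $M$ of $\theta_q(y)\wedge\bigwedge_{a\in A_q}\neg\phi(a;y)$.

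In the second compactness step, the open cover $\{\{q'\in Q:\theta_q\in q'\}:q\in Q\}$ of the compact $Q$ admits a finite subcover $\theta_{q_1},\ldots,\theta_{q_k}$; setting $A:=\bigcup_i A_{q_i}$ and $p_j:=\tp(a_j/\monster)$ for $a_j\in A$ should give the required finitely many global types. The principal obstacle I anticipate is the final transfer from ``$\theta_{q_i}(y)\wedge\bigwedge_{a\in A_{q_i}}\neg\phi(a;y)$ has no realisation in $M$'' to ``it has no realisation in $\psi(\monster)$'': because the parameters $A_{q_i}$ lie in $\monster$ and not in $M$, elementarity of $M\prec \monster$ does not apply directly to the witnessing formula. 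Closing this step plausibly requires exploiting $\mathrm{Aut}(\monster/M)$-invariance of coheirs (so that covers by $a$'s can be organised by $M$-types, and those $M$-types realized in $M$ do the work), or applying the coheir argument directly to $\tp(b/M)$ for a hypothetical counterexample $b\in\psi(\monster)$ and checking that $\tp(b/M\cup A_{q_i})$ would have to extend to a coheir, contradicting its non-finite-satisfiability in $M$.
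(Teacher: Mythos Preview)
Your direction Proposition~\ref{prop_corpq} $\Rightarrow (*)$ is essentially the paper's argument. One small slip: with $a_0,\ldots,a_{N-1}\in\monster$ you will not get $\monster\models\forall y\,(\psi(y)\to\bigvee_i\phi(a_i;y))$; what you actually obtain (by taking $a_i\models p_i|_M$) is the weaker $\forall b\in\psi(M)\,\bigvee_i\phi(a_i;b)$. This suffices, since $\tilde q$ is finitely satisfiable in $M$ and contains $\psi$.

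For $(*)\Rightarrow$ Proposition~\ref{prop_corpq}$\!$, your two nested compactness arguments are correct but unnecessarily elaborate: the paper does it in one step, applying compactness directly to the closed set $K\subseteq S_y(\monster)$ of global types finitely satisfiable in $\psi(M)$, which is exactly the union over $q\in Q$ of the coheir spaces. Either way one lands at the same point: a finite $A\subset\monster$ such that $\bigvee_{a\in A}\phi(a;b)$ holds for every $b\in\psi(M)$.

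The gap you flag is real, and neither of your suggested fixes closes it. In particular, for a counterexample $b\in\psi(\monster)$ there is no reason $\tp(b/M\cup A)$ should be finitely satisfiable in $M$, so producing a bad coheir from $b$ does not work. The paper's device is an \emph{heir} rather than a coheir, applied on the $x$-side: enumerate $A=(a'_1,\ldots,a'_n)$ and let $(a_1,\ldots,a_n)$ realise a global heir of $\tp(a'_1,\ldots,a'_n/M)$. If some $b\in\psi(\monster)$ satisfied $\bigwedge_i\neg\phi(a_i;b)$, then the formula $\psi(y)\wedge\bigwedge_i\neg\phi(x_i;y)$ would lie in $\tp(\bar a/\monster)$ with parameter $b$, so by the heir property there would exist $b'\in M$ with $\psi(b')\wedge\bigwedge_i\neg\phi(a'_i;b')$, contradicting what was established for $\psi(M)$. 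Setting $p_i=\tp(a_i/\monster)$ then finishes the proof. So the missing idea is precisely this heir transfer from $\psi(M)$ to $\psi(\monster)$.
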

\begin{proof}
Assume that $(*)$ holds and take $\phi(x;y)$, $\psi(y)$ over $M$ satisfying the hypothesis of Proposition \ref{prop_corpq}. Let $K\subseteq S_y(\monster)$ be the space of global types finitely satisfiable in $\psi(M)$. Then for every $\tilde q\in K$, $(*)$ gives us some $a\in \monster$ such that the clopen subset $\phi(a;y)$ of $K$ contains $\tilde q$. By compactness of $K$, we can find finitely many tuples $a'_1,\ldots,a'_n$ such that the sets $\phi(a'_i;y)$ cover $K$. In particular, they cover the set of types realised in $\psi(M)$. Hence we have found $a'_1,\ldots,a'_n$ such that for any $b\in \psi(M)$, one of $\phi(a'_i;b)$ holds. To get global types, simply let $(a_1,\ldots,a_n)$ realise an heir of $\tp(a'_1,\ldots,a'_n/M)$ over $\monster$. The heir property ensures that for any $b\in \psi(\monster)$, one of $\phi(a_i;b)$ holds. Now set $p_i=\tp(a_i/\monster)$.

Conversely, assume that Proposition \ref{prop_corpq} holds. Let $M$, $q$ and $\tilde q$ as in $(*)$. By Corollary \ref{cor_low}, there is some $\psi(y)\in \tp(b/M)$ such that $\phi(x;b')$ does not divide over $M$ whenever $b'\models \psi(y)$. Proposition \ref{prop_corpq} then gives types $p_0,\ldots,p_{n-1}\in S_x(\monster)$ such that for any $b'\in \psi(\monster)$, $\phi(x;b')$ is in one of them. For $i<n$, let $a_i\models p_i|_M$. Then any tuple $b'\in \psi(M)$ satisfies $\bigvee_{i<n} \phi(a_i;y)$. Hence $\tilde q$ also satisfies that formula which proves $(*)$.
\end{proof}

\subsection{Converging subsequences}

A sequence $(a_i:i<\omega)$ is called \emph{converging} if for any formula $\phi(x)\in L(\monster)$ the truth value of $\phi(a_i)$ is eventually constant.

A sequence is \emph{eventually indiscernible} if for any finite $\Delta$, some final segment of it is $\Delta$-indiscernible. By Ramsey and a simple diagonal argument, if the language is countable, then every sequence (indexed by $\omega$) has an eventually indiscernible subsequence. For our purposes, eventually indiscernible sequences are as good as truly indiscernible ones. Indeed if $\phi(x;y)$ is NIP and $(a_i:i<\omega)$ is eventually indiscernible, then for any $b$, the truth value of $\phi(a_i;b)$ alternates only finitely often (because by NIP and compactness, this is true for any $\Delta$-indiscernible sequence, for some finite $\Delta$ depending only on $\phi$).

The following is a very simple observation that will permit us to use sequences indexed by $\omega$, when in previous works we needed indiscernible sequences.

\begin{lemme}
A formula $\phi(x;y)$ is NIP if and only if for any model $M$, any sequence $(p_i:i<\omega)$ of $\phi$-types in $S_\phi(M)$ has a converging subsequence.
\end{lemme}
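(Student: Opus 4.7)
The plan is to prove the two directions separately.

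For $(\Leftarrow)$, the easy direction, suppose $\phi$ has IP, witnessed by $(a_i)_{i<\omega}$ and $(b_I)_{I\subseteq\omega}$ in $\monster$ with $\phi(a_i;b_I)\iff i\in I$. Let $M$ contain every $b_I$ and set $p_i=\tp_\phi(a_i/M)$. For any subsequence $(p_{i_k})_{k<\omega}$, take $I=\{i_{2k}:k<\omega\}$: then $\phi(x;b_I)\in p_{i_k}$ iff $k$ is even, so the subsequence alternates between the two half-planes and fails to converge.

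For $(\Rightarrow)$, assume $\phi$ is NIP. Fix $(p_i)_{i<\omega}$ in $S_\phi(M)$ and realisations $a_i\models p_i$ in $\monster$. I will argue by contradiction: suppose no subsequence of $(p_i)$ converges. Then I plan to build inductively a splitting binary tree of infinite subsets $T_s\subseteq\omega$ and parameters $b_s\in M$, indexed by $s\in 2^{<\omega}$, with $T_\emptyset=\omega$, such that $T_{s^\frown 0}=\{i\in T_s:\neg\phi(a_i;b_s)\}$ and $T_{s^\frown 1}=\{i\in T_s:\phi(a_i;b_s)\}$ are both infinite at every $s$. This is possible at each stage because $(p_i:i\in T_s)$ inherits non-convergence from the original: if no such $b_s$ existed, then for every $b\in M$ one of $\{i\in T_s:\phi(a_i;b)\}$, $\{i\in T_s:\neg\phi(a_i;b)\}$ would be finite, so $\phi(x;b)$ would be eventually constant on $T_s$ for every $b$, and the resulting map $q\colon b\mapsto(\text{eventual value})$ would be a complete $\phi$-type in $S_\phi(M)$ (consistency by picking $i\in T_s$ past the finitely many exceptional stages), to which $(p_i:i\in T_s)$ converges --- a contradiction.

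From the tree I then derive a contradiction with NIP. By compactness, for each branch $\sigma\in 2^\omega$ the partial type $\{\phi(x;b_{\sigma\restriction n})^{\sigma(n)}:n<\omega\}$ is consistent, every finite subtype being realised by some $a_j$ with $j\in T_{\sigma\restriction N}$ for $N$ large. A Ramsey-style refinement of the tree --- at each inductive stage maintaining the stronger invariant that the chosen parameters already witness all $2^n$ possible $\phi$-patterns on the current $n$-tuple of selected witnesses --- converts this branch-wise independence into flat $\phi$-shattering of arbitrarily large finite tuples, giving infinite VC-dimension for $\phi$ and contradicting the hypothesis that $\phi$ is NIP.

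The main obstacle lies precisely in this final upgrade: the bare splitting tree yields branch-wise consistent partial types very cheaply, but IP demands, for each $n$, a single $n$-tuple $(c_1,\dots,c_n)$ together with $2^n$ parameters realising every pattern $(\phi(c_k;y))_{k<n}\in\{0,1\}^n$. Maintaining this stronger, uniformly-shattering invariant through the induction is delicate --- it is essentially the combinatorial heart of Rosenthal's $\ell^1$-theorem. The cleanest implementation either invokes Rosenthal's combinatorial lemma as a black box, or carries out, at each stage, a pigeonhole argument selecting a new index $i_n$ (from an appropriate $T_s$) that simultaneously splits all the fibres of the types already set up at the previous stage.
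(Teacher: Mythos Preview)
Your $(\Leftarrow)$ direction is correct and essentially matches the paper's (the paper uses an indiscernible sequence together with an alternating parameter $b$; you use the raw IP witnesses $b_I$ --- both work).

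For $(\Rightarrow)$, the paper takes a much shorter route than your Rosenthal-style argument. It restricts to a countable sublanguage $L_0$ containing $\phi$, picks realisations $a_i\models p_i$, and uses Ramsey plus a diagonal argument to extract a subsequence $(a_{f(i)})$ that is \emph{eventually indiscernible}: for every finite $\Delta\subseteq L_0$, some tail is $\Delta$-indiscernible. Since $\phi$ is NIP, compactness gives a finite $\Delta$ and a bound $N$ such that $\phi(\cdot;b)$ alternates at most $N$ times along any $\Delta$-indiscernible sequence; hence along $(a_{f(i)})$ the truth value of $\phi(a_{f(i)};b)$ is eventually constant for every $b\in M$, and $(p_{f(i)})$ converges. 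No tree, no Rosenthal.

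Your approach is a legitimate alternative --- it is precisely the combinatorial core of Rosenthal's $\ell^1$-theorem --- but as written it is incomplete. The splitting tree is built correctly, yet the passage from ``every branch type is consistent'' to ``$\phi$ has IP'' is only gestured at. That step is genuine work: the parameters $b_s$ along different branches are different, so the tree does not directly hand you a single $n$-tuple of parameters on which all $2^n$ $\phi$-patterns are realised. You flag this yourself as the main obstacle and then stop. Either cite Rosenthal's lemma as a black box and check that your setup matches its hypotheses, or --- far more efficiently in this context --- replace the whole forward direction with the two-line eventually-indiscernible argument above.
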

\begin{proof}
Assume that $\phi(x;y)$ is NIP. Let $L_0$ be a countable sublanguage containing $\phi$ and we work in $L_0$. Pick some $a_i\models p_i$. There is a subsequence $(a_{f(i)}:i<\omega)$ which is eventually indiscernible. Then as $\phi(x;y)$ is NIP the sequence $\tp_{\phi}(a_{f(i)}/\monster)$ is converging and so is the sequence $(p_{f(i)}:i<\omega)$.

Conversely, if $\phi(x;y)$ is not NIP, then there are an indiscernible sequence $(a_i:i<\omega)$ and $b$ is such that $\phi(a_i;b)$ holds if and only if $i$ is even. Then no subsequence of $(\tp_{\phi}(a_i/\monster):i<\omega)$ can be converging (since by indiscernibility, we can find a corresponding $b'$ such that $\phi(x;b')$ alternates on it).
\end{proof}

Note that this statement looks even more natural in continuous model theory, where it would probably make a convenient definition of NIP.

Actually, a little bit more is true when $M$ is countable: closures of sets are given by limits of sequences (we say that the space $S_\phi(M)$ is \emph{Fr\' echet}), as we will see now.

\begin{lemme}\label{lem_frechet}
Let $M$ be countable and $\Delta=\{\phi_i(x;y_i)\}$ a countable set of NIP formulas. Let $q$ be a global $\Delta$-type finitely satisfiable in $M$. Then there is a converging sequence $(b_i:i<\omega)$ of points in $M$ such that $\lim(\tp_{\Delta}(b_i/\monster))=q$.
\end{lemme}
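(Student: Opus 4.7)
The strategy is to first approximate $q|_M$ by a sequence in $M$ using finite satisfiability, then extract a subsequence whose $\Delta$-types over $\monster$ converge (via Lemma~2.8 applied diagonally), and finally identify the limit with $q$.

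Since $M$ and $\Delta$ are countable, $q|_M$ is a countable $\Delta$-type; enumerate it as $\{\psi_k:k<\omega\}$, and use $M$-finite satisfiability of $q$ to pick, for each $n$, a point $b'_n\in M$ satisfying $\bigwedge_{k\le n}\psi_k$. This gives $\tp_\Delta(b'_n/M)\to q|_M$. Applying Lemma~2.8 to each $\phi\in\Delta$ successively and passing to a diagonal subsequence, one obtains a subsequence $(b_n)$ of $(b'_n)$ along which $\tp_\phi(b_n/\monster)$ converges for every $\phi\in\Delta$; a further refinement to an eventually indiscernible subsequence then ensures, using NIP of each $\phi$, that $\phi(b_n;c)$ is eventually constant for every $c\in\monster$. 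Set $q^{\ast}:=\lim\tp_\Delta(b_n/\monster)$; it is an $M$-finitely satisfiable global $\Delta$-type with $q^{\ast}|_M=q|_M$.

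The hard part is to show $q^{\ast}=q$. A priori, $q^{\ast}$ is only constrained to extend $q|_M$, and as the example of DLO (where a sequence in $\mathbb{Q}$ approaching $\sqrt{2}$ may converge to either the type ``just above $\sqrt{2}$'' or ``just below'') shows, this does not determine $q^{\ast}$. The construction therefore needs to be refined to force convergence not only on parameters in $M$ but also on parameters capturing the behaviour of $q$ outside $M$. The natural refinement is to replace $M$ by a countable elementary submodel $N\prec \monster$ containing realizations of a countable dense subset of $S_{\Delta^{\opp}}(M)$ (which is Polish, hence separable, since $M$ and $\Delta$ are countable), and to enumerate $q|_N$ rather than $q|_M$. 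This makes $q^{\ast}|_N=q|_N$, and NIP of each $\phi$, together with the density of the realized $\Delta^{\opp}$-types over $M$, should let one transfer this to all of $\monster$. This last identification step, parallel to the Bourgain--Fremlin--Talagrand theorem alluded to in the introduction (with NIP playing the role of Rosenthal's stability), is where I expect the main difficulty and the true use of the NIP hypothesis to lie.
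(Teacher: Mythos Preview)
Your diagnosis of the difficulty is exactly right: building $(b_n)$ so that $\tp_\Delta(b_n/M)\to q|_M$ and then extracting a $\Delta$-convergent subsequence only pins down $q^*|_M$, and the DLO example you cite shows this is not enough. But your proposed repair --- enlarging $M$ to a countable $N$ realizing a \emph{dense} set of $\Delta^{\opp}$-types over $M$ --- does not close the gap. In DLO with $\phi(x;y)=x<y$, the realized $\phi^{\opp}$-types over $M=\mathbb{Q}$ are already dense in $S_{\phi^{\opp}}(M)$, so your condition is satisfied with $N=M$; yet the two coheirs at $\sqrt 2$ still agree over $M$. More generally, no countable $N$ can realize \emph{every} $\Delta^{\opp}$-type over $M$, and for any unrealized one you will typically find two $M$-finitely satisfiable $\Delta$-types that agree over $N$ but split there. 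Density in the Stone topology is simply the wrong invariant; the BFT analogy is suggestive but does not supply the missing identification.

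The paper's argument uses a completely different set of witnesses. Extend $q$ to a \emph{complete} $M$-finitely satisfiable type $\tilde q$ and let $I=(b'_i)_{i<\omega}$ be a Morley sequence of $\tilde q$ over $M$. Now choose the $b_n\in M$ to converge to $\tilde q$ over the countable set $MI$ (not just $M$). If $q'$ is any accumulation point of $(\tp(b_n/\monster))$, then $q'|_{MI}=\tilde q|_{MI}$; an easy induction then gives $q'^{(\omega)}|_M=\tilde q^{(\omega)}|_M$, and Fact~\ref{fact_morleyloc} (an $M$-invariant type is determined on NIP formulas by its Morley sequence over $M$) forces $q'\upharpoonright\Delta=\tilde q\upharpoonright\Delta=q$. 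So the correct extra parameters are not generic realizations of $\Delta^{\opp}$-types, but a Morley sequence of $q$ itself; and the identification step is not a density/continuity argument but the Morley-sequence rigidity of invariant types under NIP.
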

\begin{proof}
Let $L_0$ be a countable language containing $\Delta$ and we work in $L_0$. Extend $q$ to some complete type $\tilde q$ finitely satisfiable in $M$. Let $I=(b'_i:i<\omega)$ be a Morley sequence of $\tilde q$ over $M$. By a diagonal construction, we can find a sequence $(b_i:i<\omega)$ in $M$ such that for any formula $\phi(x)\in \tilde q|_{MI}$, $\phi(b_i)$ is true for almost all $i$. We show that the sequence $(\tp_{\Delta}(b_i/\monster):i<\omega)$ converges to $q$. Let $q'$ be an accumulation point of $(\tp(b_i/\monster):i<\omega)$ in $S(\monster)$ (hence $q'$ is a global type finitely satisfiable in $M$). Then the restrictions of $q'$ and $\tilde q$ to $MI$ coincide. By an easy induction, this implies that the Morley sequence of $q'$ is the same as that of $\tilde q$. By Fact \ref{fact_morleyloc}, the restrictions of $\tilde q$ and $q'$ to $\Delta$-formulas agree.
\end{proof}

When $M$ is uncountable, one cannot work with sequences anymore. We have to replace them by more complicated directed families. Let $\kappa>\aleph_0$ be a cardinal. Let $S_{<\omega}(\kappa)$ be the set of finite subsets of $\kappa$ and let $\mathcal F$ be the filter on $S_{<\omega}(\kappa)$ generated by the sets $T_J=\{I\in S_{<\omega}(\kappa): I\supseteq J\}$ where $J$ ranges in $S_{<\omega}(\kappa)$.

\begin{lemme}\label{lem_frechetuncount}
Let $M$ have cardinality $\kappa$ and let $\Delta=\{\phi_i(x;y_i)\}$ be a set of NIP formulas of size $\leq \kappa$. Let $q$ be a global $\Delta$-type finitely satisfiable in $M$. Then there is a family $(b_l:l\in S_{<\omega}(\kappa))$ of points in $M$ such that $\lim_{\mathcal F}(\tp_{\Delta}(b_l/\monster))=q$.
\end{lemme}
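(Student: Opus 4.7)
My plan is to adapt the countable proof, replacing the diagonal construction of a sequence with a direct parametrisation by finite subsets of $\kappa$, and the single limit by a limit along the cofinite filter $\mathcal F$.

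First, I would extend $q$ to a complete global type $\tilde q$ finitely satisfiable in $M$, and let $I=(b'_i:i<\omega)$ be a Morley sequence of $\tilde q$ over $M$. Since $|M|=\kappa$ and $|\Delta|\leq\kappa$ (and we may work inside a language $L_0$ of size $\kappa$ containing $\Delta$), the partial type $\tilde q\upharpoonright MI$ contains at most $\kappa$ many formulas in the variable $x$; enumerate them as $(\varphi_\alpha(x):\alpha<\kappa)$. For each $l\in S_{<\omega}(\kappa)$, pick $b_l\in M$ satisfying $\bigwedge_{\alpha\in l}\varphi_\alpha(x)$, which exists by finite satisfiability of $\tilde q$ in $M$.

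Next, I would verify that $\lim_{\mathcal F}\tp_\Delta(b_l/\monster)=q$ by passing to ultrafilters. Let $\mathcal U$ be any ultrafilter on $S_{<\omega}(\kappa)$ with $\mathcal U\supseteq\mathcal F$, and let $q_{\mathcal U}=\lim_{\mathcal U}\tp(b_l/\monster)$, a global complete type finitely satisfiable in $M$. For every $\alpha<\kappa$, the set $T_{\{\alpha\}}=\{l:\alpha\in l\}$ lies in $\mathcal F\subseteq\mathcal U$, and for $l\in T_{\{\alpha\}}$ we have $\models\varphi_\alpha(b_l)$ by construction; hence $\varphi_\alpha\in q_{\mathcal U}$. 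Thus $q_{\mathcal U}\upharpoonright MI\supseteq\tilde q\upharpoonright MI$, and since both sides are complete types over $MI$ they are equal. An easy induction on $n$ (using $q_{\mathcal U}\upharpoonright Mb'_0\cdots b'_{n-1}=\tilde q\upharpoonright Mb'_0\cdots b'_{n-1}$) shows that $I$ is a Morley sequence of $q_{\mathcal U}$ over $M$ as well, so $q_{\mathcal U}^{(\omega)}\upharpoonright M=\tilde q^{(\omega)}\upharpoonright M$. By Fact \ref{fact_morleyloc} applied to each $\phi\in\Delta$, we conclude $q_{\mathcal U}\upharpoonright\Delta=\tilde q\upharpoonright\Delta=q$.

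Finally, a standard topological fact closes the argument: if every ultrafilter refining the filter $\mathcal F$ pushes the family $(\tp_\Delta(b_l/\monster))_{l\in S_{<\omega}(\kappa)}$ to the same limit $q$, then the family $\mathcal F$-converges to $q$ (otherwise some basic clopen neighbourhood of $q$ would be missed cofinally, producing an ultrafilter extending $\mathcal F$ with a different limit). The only step requiring genuine care is the verification that $I$ remains a Morley sequence of $q_{\mathcal U}$, which is where Fact \ref{fact_morleyloc} is used to pass from agreement on $MI$ to agreement on the entire $\Delta$-type over the monster; everything else is bookkeeping adapted from the countable case.
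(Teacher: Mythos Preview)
Your proof is correct and follows essentially the same approach as the paper's: extend to a complete finitely satisfiable $\tilde q$, enumerate $\tilde q\upharpoonright MI$ by $\kappa$, choose $b_l$ realising the conjunction indexed by $l$, and use Fact~\ref{fact_morleyloc} to show that any ultrafilter limit agrees with $q$ on $\Delta$-formulas. The only cosmetic difference is that the paper argues by contradiction (picking one ultrafilter witnessing failure of $\mathcal F$-convergence), whereas you prove directly that every ultrafilter extending $\mathcal F$ yields the same $\Delta$-limit and then invoke the standard filter/ultrafilter fact; these are equivalent formulations of the same argument.
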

\begin{proof}
The conclusion means that for any formula $\phi(x;c)\in q$, for $\mathcal F$-almost all $l\in S_{<\omega}(\kappa)$, $b_l \models \phi(x;c)$.

Extend $q$ to some complete type $\tilde q$ finitely satisfiable in $M$ and let $I=(b'_i:i<\omega)$ be a Morley sequence of $\tilde q$ over $M$. List the formulas in $\tilde q|_{MI}$ as $(\phi_i(x;c_i):i<\kappa)$. For $l\in S_{<\omega}(\kappa)$, take $b_l\in M$ realising $\bigwedge_{i\in l} \phi_i(x;c_i)$. Assume that the family $(\tp_{\Delta}(b_l/\monster):l\in S_{<\omega}(\kappa))$ does not converge to $q$ along $\mathcal F$ and let $\phi(x;c)\in q$ witness it. Let $\mathcal D$ be an ultrafilter on $S_{<\omega}(\kappa)$ extending $\mathcal F$ and containing $\{l\in S_{<\omega}(\kappa):\models \neg \phi(b_l;c)\}$. Let $q'=\lim_{\mathcal D}(\tp(b_l/\monster))$. Then as $\mathcal D$ contains $\mathcal F$, we have $q'|_{MI}=\tilde q|_{MI}$ and hence $q'^{(\omega)}|_M=\tilde q^{(\omega)}|_M$ as before. By Fact \ref{fact_morleyloc}, $\tilde q$ and $q'$ agree on $\Delta$-formulas, but this is a contradiction since $q'\vdash \neg \phi(x;c)$.
\end{proof}

Now, we are ready to prove $(*)$ of Proposition \ref{prop_pqequiv}.

\begin{thm}\label{th_coheirpq}
Assume that the formula $\phi(x;y)$ is NIP and $\phi(x;b)\in L(\monster)$ does not divide over $M$. Let $q$ be a coheir of $\tp(b/M)$. Then for some $a\in \monster$ $q\vdash \phi(a;y)$.
\end{thm}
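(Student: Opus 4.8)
The plan is to reformulate the conclusion as a statement about transversals of the net that represents $q$, and then derive it from the fact that $\phi(x;b)$ does not divide over $M$. I first dispose of a trivial case: if $\models\phi(a;b)$ for some $a\in M$, then $\phi(a;y)\in\tp(b/M)\subseteq q$ and we are done, so I may assume $\phi(M;b)=\emptyset$.

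Next I reformulate. Applying Lemma \ref{lem_frechetuncount} (or Lemma \ref{lem_frechet} when $M$ is countable) to the single NIP formula $\phi^{opp}(y;x)$ and the $M$-finitely satisfiable type $q$, I get a family $(b_l:l\in S_{<\omega}(\kappa))$ of elements of $M$ with $\lim_{\mathcal F}\tp_{\phi^{opp}}(b_l/\monster)=q\upharpoonright_{\phi^{opp}}$. Unwinding exactly as in the proof of that lemma, this limit being $q\upharpoonright_{\phi^{opp}}$ means precisely that, for $a\in\monster$, $\phi(a;y)\in q$ if and only if $\models\phi(a;b_l)$ for all finite $l$ extending some fixed finite $J\subseteq\kappa$. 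Hence the theorem is equivalent to the assertion that there is a finite $J\subseteq\kappa$ such that the partial type $\pi_J(x)=\{\phi(x;b_l):l\supseteq J\}$ over $M$ is consistent; any realisation of $\pi_J$, which may be taken in $\monster$, is then the required $a$.

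It remains to produce such a $J$, and this is where non-dividing is used. Recall from the construction in Lemma \ref{lem_frechetuncount} that the $b_l$ may be taken to realise increasing finite fragments of $q\upharpoonright_{MI}$, where $I=(b'_j:j<\omega)$ is a Morley sequence of $q$ over $M$; thus $I$ is $M$-indiscernible and each $b'_j\equiv_M b$. By Corollary \ref{cor_low}, fix $\theta(y)\in\tp(b/M)$ such that $\phi(x;b')$ does not divide over $M$ whenever $b'\models\theta$, and put the index of $\theta$ into $J$. Since $\phi(x;b)$ does not divide over $M$, every finite conjunction of $\phi$-instances along $I$ is consistent, so every formula of the form $\exists x\,(\phi(x;y)\wedge\bigwedge_{j}\phi(x;b'_{i_j}))$ lies in $q\upharpoonright_{MI}$; adding the (finitely many relevant) such indices to $J$, the aim is to force any finitely many of the $b_l$ with $l\supseteq J$ to share a point of $\phi$. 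Once $\pi_J$ is finitely consistent it is consistent, and we conclude.

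The main obstacle is exactly this last implication. The $b_l$ realise $q\upharpoonright_{MI}$ but not $q$ over one another, so transferring ``consistency of finitely many $\phi$-instances along a Morley sequence of $q$'' — which is what non-dividing supplies — to ``consistency of $\bigwedge_j\phi(x;b_{l_j})$'' requires choosing the net, or a cofinal refinement of it, so that its members look, in each finite approximation, like an initial segment of a Morley sequence of $q$. This is the step in which the hypothesis that $q$ is a \emph{coheir} (rather than an arbitrary non-dividing extension of $\tp(b/M)$) is genuinely used, and it is the model-theoretic substitute for the fractional-Helly / LP-duality input of the combinatorial $(p,q)$-theorem; getting it to work cleanly, without circularity, is the part of the argument that needs care.
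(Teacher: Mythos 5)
Your reformulation in the first half is correct: using Lemma \ref{lem_frechetuncount} (or \ref{lem_frechet}) to represent $q\upharpoonright_{\phi^{\mathrm{opp}}}$ as the limit of a net $(b_l)$ in $M$, the conclusion does become equivalent to finding a finite $J$ with $\pi_J(x)=\{\phi(x;b_l):l\supseteq J\}$ consistent. But this is only a restatement; the whole content of the theorem is the step you flag at the end and leave open, and that step does not go through the way you sketch it. Non-dividing of $\phi(x;b)$ gives you consistency of $\{\phi(x;b'_i):i<\omega\}$ along a Morley sequence $I$ of $q$ over $M$ (or equivalently, by Lemma \ref{lem_low}, $(\VC^*(\phi)+1)$-consistency along any $M$-indiscernible sequence in $\tp(b/M)$). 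The elements $b_l$ of the net, however, live in $M$ and only realise finite fragments of $q\upharpoonright_{MI}$; there is no a priori reason why two such $b_l,b_{l'}$ should sit inside any $M$-indiscernible sequence, let alone a Morley sequence of $q$, and in fact they typically do not. Putting finitely many ``$\exists x$''-formulas into $J$ only constrains each $b_l$ individually, not the joint $\phi$-type of several of them, so it does not force $\bigwedge_j\phi(x;b_{l_j})$ to be consistent. This is precisely the combinatorial difficulty that the $(p,q)$-theorem addresses, and a direct transfer of the kind you propose would, if it worked, be a proof of the $(p,q)$-theorem by compactness alone --- which does not exist.

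The paper closes this gap with a genuinely different idea: an alternation (exhaustion) argument. One fixes $\pi(x)=\bigwedge_{i<\VC^*(\phi)+1}\phi(x;b'_i)$ along the Morley sequence $I$, then tries to build $b_0,b_1,\dots$ in $M$, with $b_n$ realising larger and larger finite fragments $q_n$ of $q\upharpoonright_{MI}$, so that $\pi(x)$ together with the alternation conditions $\phi(x;b_{i-1})\leftrightarrow\neg\phi(x;b_i)$ stays consistent. By the Fr\'echet argument (your step 2, essentially) the $\phi^{\mathrm{opp}}$-types of the $b_n$ converge, so infinite alternation is impossible under NIP; hence the construction halts at some finite stage $n$. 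At that stage, a second, separate use of the hypotheses enters: Lemma \ref{lem_low} (applied to $\phi'(x;y)=\phi(x;y)\wedge\psi(x)$, where $\psi$ records the alternation so far) produces a witness $a_*$ realising $\pi\wedge\psi$ with $\phi(a_*;b'_*)$ for $b'_*\models q\upharpoonright_{MI}$, and then finite satisfiability of $q$ in $q_n(M)$ is used, in the contrapositive, to show that $\phi(a_*;y)$ holds on all of $q_n(M)$ and hence lies in $q$. The coheir hypothesis and the non-dividing hypothesis are thus used at two disjoint places, interleaved by the NIP-driven termination of the alternation; none of this is visible from the reformulation alone. So the gap you name is real and is exactly where the theorem lives.
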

\begin{proof}
To simplify the exposition, we first assume that $M$ is countable (which is all that is needed for Proposition \ref{prop_pqequiv}). We can then restrict to a countable sublanguage, and assume that $L$ is countable.

Let $q$ be a coheir of $\tp(b/M)$ and fix some $N$ containing $M$ and $|M|^+$-saturated. Let $I=(b'_i:i<\omega)$ in $N$ be a Morley sequence of $q$ over $M$ and let $b'_*\in N$ realise $q$ over $MI$. Let $\pi(x)=\bigwedge_{i<\VC^*(\phi)+1} \phi(x;b'_i)$. By the non-dividing assumption, $\pi(x)$ is consistent. Write $q|_{MI}(y)=\bigcup_{n<\omega} q_n(y)$ where each $q_n$ is finite. We now try to build a sequence $(b_i:i<\omega)$ of points in $M$ such that for each $n$:

$\bullet_1$ $\pi(x) \cup \{ \phi(x;b_{i-1})\leftrightarrow \neg \phi(x;b_{i}) : 0<i<n\}$ is consistent;

$\bullet_2$ $b_n\models q_n(y)$.

If we succeed, then by $\bullet_2$ and the argument in the proof of Lemma \ref{lem_frechet}, the sequence $(\tp_{\phi^{opp}}(b_i/\monster):i<\omega)$ is converging and this contradicts $\bullet_1$. Hence we must be stuck at some finite stage. Suppose we have built $(b_i:i<n)$ but cannot find $b_n$.

\vspace{4pt}
\underline{Claim}: There is $a_*$ realising $\pi(x) \cup \{ \phi(x;b_{i-1})\leftrightarrow \neg \phi(x;b_{i}) : 0<i<n\}$ and such that $\phi(a_*;b'_*)$ holds.

\emph{Proof}: Let $\psi(x)=\bigwedge_{0<i<n} \phi(x;b_{i-1})\leftrightarrow \neg \phi(x;b_{i})$ and $\phi'(x;y)=\phi(x;y)\wedge \psi(x)$. By construction, the partial type $\{\phi'(x;b'_i) : i<\VC^*(\phi)+1\}$ is consistent. We of course have $\VC^*(\phi')\leq \VC^*(\phi)$ since $y$ does not appear in $\psi(x)$. Furthermore, the sequence $(b'_i:i<\omega)+(b'_*)$ is indiscernible over the parameters of $\phi'(x;y)$. We then conclude from Lemma \ref{lem_low} that the set $\{\phi'(x;b'_i) : i<\omega\} \cup \{\phi'(x;b'_*)\}$ is consistent and this gives what we want.\hfill$\dashv$

\smallskip
Let $\pi_n(x)$ be the $\phi$-type of $a_*$ over $\{b_i:i<n\}$ (a finite partial type). Then for some $\epsilon\in \{0,1\}$, there is no $b\in M$ satisfying
$$q_n(y) \wedge (\exists x)(\pi(x)\wedge \pi_n(x) \wedge \phi(x;y)^{\epsilon}).$$

As $\tp(b'_*/MI)$ is finitely satisfiable in $q_n(M)$, \[\models \neg (\exists x )(\pi(x)\wedge \pi_n(x)\wedge \phi(x;b'_*)^{\epsilon}).\] As $a_*$ realises $\pi(x)\wedge \pi_n(x)$ and $\phi(x;b'_*)$, we must have $\epsilon=0$. We conclude that for any $b\in q_n(M)$ we have $\models \phi(a_*;b)$. As $q$ is finitely satisfiable in $q_n(M)$, we have $q\vdash \phi(a_*;y)$ which proves the proposition.

\smallskip
The proof for uncountable $M$ (and $L$) is similar, but using the idea of the proof of Lemma \ref{lem_frechetuncount} instead of Lemma \ref{lem_frechet}. We take $I$ and $b'_*$ exactly as above and define $\pi(x)$ in the same way. Enumerate the formulas in $q|_{MI}(y)$ as $\{\theta_i(y) : i<\kappa\}$. We try to build a family $(b_I:I\in S_{<\omega}(\kappa))$ of points in $M$ by induction on $|I|$ such that for each $I\in S_{<\omega}(\kappa)$, $I=\{k_0,\ldots,k_{n-1}\}$ listed in increasing order:

$\bullet'_1$ the type $\pi(x) \cup \{ \phi(x;b_{J_{i-1}})\leftrightarrow \neg \phi(x;b_{J_i}) : 0<i\leq n\}$ is consistent, where $J_i=\{k_0,\ldots,k_{i-1}\}$;

$\bullet'_2$ $b_I\models \bigwedge_{i\in I} \theta_i(y)$.

If we succeed, then by $\bullet'_2$ and the argument in the proof of Lemma \ref{lem_frechetuncount}, $\lim_{\mathcal F}(\tp_{\phi^{opp}}(b_I/\monster):I\in S_{<\omega}(\kappa))$ exists and this contradicts $\bullet'_1$. Let $I$ be such that $b_J$, for $J\subset I$ have been defined, but we cannot find $b_I$. Let $n=|I|$ and define the increasing sequence $\emptyset=J_0\subset J_1\subset \cdots \subset J_n=I$ of initial segments as in $\bullet'_1$. Then the type $\pi(x) \cup \{ \phi(x;b_{J_{i-1}})\leftrightarrow \neg \phi(x;b_{J_i}) : 0<i< n\}$ is consistent. For $i<n$, define $b_i=b_{J_i}$ and set $q_n(y)=\bigwedge_{i\in I}\theta_i(y)$. Now the end of the proof above goes through word for word.
\end{proof}

\subsection{Local description of invariant $\phi$-types}\label{sec_localdescription}

An interesting consequence of Theorem \ref{th_coheirpq} is that one can view an invariant $\phi$-type as a kind of type on finitely-satisfiable $\phi^{opp}$-types. We explain this now.

First, let us contemplate the usual description of invariant types. Assume that $p$ is an $M$-invariant $\phi(x;y)$-type. Then it is completely described by the function: $d_p \phi : S_y(M) \to \{0,1\}$ defined by $d_p \phi (\tp(b/M))=\epsilon$, $b\in \monster$, if $p\vdash \phi(x;b)^{\epsilon}$. If $p$ is finitely satisfiable, then the function $d_p\phi$ actually factors through the space $S_{\phi^{opp}}(M)$ of $\phi^{opp}$-types over $M$. To see this, assume that $b$ and $b'$ of size $|y|$ have the same $\phi^{opp}$-type over $M$, then for any $a\in M$, we have $\models \phi(a;b)\leftrightarrow \phi(a;b')$. Therefore also $p\vdash \phi(x;b)\leftrightarrow \phi(x;b')$ as $p$ is finitely satisfiable in $M$. This is no longer true if $p$ is only assumed to be $M$-invariant. In fact, the function $d_p \phi$ may not even factor through $S_{L_0}(M)$ where $L_0$ is a sublanguage containing $\phi(x;y)$. For a simple example of this, take $T$ to be \textsf{DLO} in the language $L_0=\{\leq \}$. Let $D(x)$ be a new unary predicate and have $D(x)$ name a non-definable initial segment of the universe. Take $\phi(x;y)=x\leq y$ and let $M$ be any model. Let $p$ be the $M$-invariant $\phi$-type of an element $a$ such that $D(\monster)<a<\monster \setminus D(\monster)$. Then $p$ is definable but it does not map to an invariant type in the reduct to $L_0$.

We now present a slightly different way to describe invariant types which only involves the formula $\phi$ and does not depend on the rest of the language.

\begin{Prop}
Let $\phi(x;y)$ be an NIP formula, $M\prec^+ N$ and $p(x)$ an $M$-invariant $\phi$-type. Let $b,b'\in \monster$ such that both $\tp(b/N)$ and $\tp(b'/N)$ are finitely satisfiable in $M$ and $\tp_{\phi^{opp}}(b/N)=\tp_{\phi^{opp}}(b'/N)$. Then we have $p\vdash \phi(x;b) \iff p\vdash \phi(x;b')$.
\end{Prop}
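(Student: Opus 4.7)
The plan is to derive a contradiction via Theorem \ref{th_coheirpq} applied to the two-parameter NIP formula $\phi'(x;y,y') := \phi(x;y) \wedge \neg\phi(x;y')$ and the tuple $(b,b')$. Suppose for contradiction that $p \vdash \phi(x;b)$ while $p \vdash \neg\phi(x;b')$.

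First I would arrange that $\tp(b,b'/N)$ is finitely satisfiable in $M$. Extend $\tp(b'/N)$ to a global coheir $\tilde q'$ of $\tp(b'/M)$, and replace $b'$ by a realisation of $\tilde q'|_{Nb}$. The new $b'$ is $\equiv_N$-conjugate to the old one, so by $M$-invariance of $p$ we still have $p\vdash \neg\phi(x;b')$, and $\tp_{\phi^{opp}}(b'/N)$ is unchanged. Since now $\tp(b/N)$ and $\tp(b'/Nb)$ are both $M$-finitely satisfiable, so is $\tp(b,b'/N)$ (as one checks by a two-step realisation of an arbitrary $\chi(y,y')\in\tp(b,b'/N)$, first through $\tp(b'/Nb)$, then through $\tp(b/N)$). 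Hence $\tp(b,b'/N)$ extends to a global coheir $q^*$ of $\tp(b,b'/M)$. Next I would verify that $\phi'(x;(b,b'))$ does not divide over $M$: for any $M$-indiscernible $(b_i,b'_i)_{i<\omega}$ starting at $(b,b')$, $M$-invariance of $p$ yields that both $\phi(x;b_i)$ and $\neg\phi(x;b'_i)$ lie in $p$ for every $i$, so $\{\phi'(x;(b_i,b'_i))\}_{i<\omega}$ is consistent. Applying Theorem \ref{th_coheirpq} to $\phi'$, $(b,b')$, and $q^*$ then yields some $a\in\monster$ with $q^* \vdash \phi(a;y)\wedge\neg\phi(a;y')$.

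The last step is to push $a$ into $N$. Because $q^*$ is $M$-finitely satisfiable, whether $\phi'(a;y,y')\in q^*$ depends only on $\tp_{\phi'}(a/M)$: if $a,a^*$ shared this partial type but disagreed on membership, the resulting witness $\phi'(a;y,y')\wedge\neg\phi'(a^*;y,y')\in q^*$ would have to be realised by some $(c,c')\in M^2$, contradicting equality of their $\phi'$-types over $M$. By $|M|^+$-saturation of $N$, some $a^*\in N$ realises $\tp_{\phi'}(a/M)$, so $q^* \vdash \phi(a^*;y)\wedge\neg\phi(a^*;y')$; since $a^*\in N$ and $q^*|_N = \tp(b,b'/N)$, this yields $\models \phi(a^*;b)\wedge\neg\phi(a^*;b')$, directly contradicting $\tp_{\phi^{opp}}(b/N) = \tp_{\phi^{opp}}(b'/N)$. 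The main delicate step is the preparatory reduction to a single global coheir $q^*$ sitting simultaneously over $b$ and $b'$; afterwards Theorem \ref{th_coheirpq} does the real work and the saturation descent into $N$ is routine.
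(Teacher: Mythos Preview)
Your proof is correct and follows essentially the same route as the paper: apply Theorem \ref{th_coheirpq} to the NIP formula $\phi(x;y)\wedge\neg\phi(x;y')$ and a joint $M$-coheir of the pair, obtain a witness $a$ that can be taken in $N$, and contradict the equality of $\phi^{opp}$-types. The only cosmetic differences are that the paper produces the joint coheir directly as the tensor product $\tilde q_0(y_0)\otimes\tilde q_1(y_1)$ of the unique global $M$-invariant extensions of $\tp(b/N)$ and $\tp(b'/N)$ (avoiding your rearrangement of $b'$), and it silently absorbs the descent of $a$ into $N$ that you spell out via the $\phi'$-type over $M$.
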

\begin{proof}
Assume that for example $p\vdash \phi(x;b)\wedge \neg \phi(x;b')$. Let $q_0(y)=\tp(b/N)$ and $q_1(y)=\tp(b'/N)$. Define also $\tilde q_i$ to be the unique $M$-invariant global extension of $q_i$, $i=0,1$. Take $(b_0,b_1)\models \tilde q_0\otimes \tilde q_1|_M$. Then by $M$-invariance of $p$, we have $p\vdash \phi(x;b_0)\wedge \neg \phi(x;b_1)$. Now $\tilde q_0\otimes \tilde q_1$ is a coheir of $\tp(b_0,b_1/M)$. Hence by Theorem \ref{th_coheirpq}, there is $a\in N$ such that $\tilde q_0(y_0)\otimes \tilde q_1(y_1) \vdash \phi(a;y_0)\wedge \neg \phi(a;y_1)$. But by hypothesis on $b,b'$, $\tilde q_0$ and $\tilde q_1$ must agree on formulas of the form $\phi(a;y)$, $a\in N$. Contradiction.
\end{proof}

Now to any global $\phi$-type $p(x)$ invariant over $M$, one can associate a function $f_p$ from the space $S^{fs}_{\phi^{opp}}(\monster,M)$ of global $M$-finitely satisfiable $\phi^{opp}$-types to $\{0,1\}$ defined by $f_p(q)=\epsilon$ if for some (any) $b\in \monster$ such that $\tp(b/N)$ is finitely satisfiable in $M$ and extends $q|_N$, we have $p\vdash \phi(x;b)^{\epsilon}$. Let $q_0(y)\in S(M)$ be a complete type and $b\models q_0$. How does one know from the function $f_p$ if $p\vdash \phi(x;b)$? We can take any global coheir $\tilde q_0$ of $q_0$ and let $q$ be its restriction to $\phi^{opp}$-formulas. Then $p\vdash \phi(x;b)$ if and only if $f_p(q)=1$. In particular, this shows that $f_p$ must take the same value on all types $q'$ such that $q'(y)\cup q_0(y)$ is finitely satisfiable in $M$. Also this shows that $p$ is entirely determined by the function $f_p$.

What is the range of the map $p\mapsto f_p$? This of course depends on the language, since increasing the language may add new invariant $\phi$-types. However it is possible to describe exactly which functions $f$ can arise as a function $f_p$ in some expansion of the structure. Let $\Omega$ be the set of all functions $S^{fs}_{\phi^{opp}}(\monster, M) \to \{0,1\}$ equipped with the product topology. Let also $Inv_\phi(M)\subseteq \Omega$ be the set of functions $f \in \Omega$ such that $f=f_p$ for some $M$-invariant $\phi$-type $p$.

\begin{lemme}
The set $Inv_\phi(M)$ is closed in $\Omega$.
\end{lemme}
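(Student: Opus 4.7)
The plan is to exhibit $Inv_\phi(M)$ as the continuous image of a compact space, hence closed (since $\Omega$ is Hausdorff). Write $X$ for the space of global $M$-invariant $\phi$-types, viewed as a subspace of $S_\phi(\monster)$, and define $\Phi \colon X \to \Omega$ by $\Phi(p) = f_p$. I will check that $X$ is compact and that $\Phi$ is continuous; then $Inv_\phi(M) = \Phi(X)$ is compact and hence closed in $\Omega$.

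First, compactness of $X$. The full space $S_\phi(\monster)$ is compact (it is a Stone space), so it suffices to see that $X$ is closed. For each pair $b, b' \in \monster$ with $\tp(b/M) = \tp(b'/M)$, the condition ``$\phi(x;b) \in p \iff \phi(x;b') \in p$'' is clopen in $S_\phi(\monster)$. The set of $M$-invariant $\phi$-types is the intersection of these clopen conditions over all such pairs, hence closed.

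Next, continuity of $\Phi$. Since $\Omega$ carries the product topology, it is enough to check that for each fixed $q \in S^{fs}_{\phi^{opp}}(\monster,M)$, the evaluation map $p \mapsto f_p(q)$ is continuous from $X$ into $\{0,1\}$. Fix some $N$ with $M \pprec N$. Because $q$ is $M$-finitely satisfiable, the partial $\phi^{opp}$-type $q|_N$ can be extended to a full $M$-finitely satisfiable type over $N$ (by an ultrafilter argument on $M$), so one can pick once and for all a $b \in \monster$ with $\tp(b/N)$ finitely satisfiable in $M$ and $\tp_{\phi^{opp}}(b/N) = q|_N$. By the definition of $f_p$ (and the fact, established in the preceding proposition, that the choice of such $b$ is irrelevant), we have $f_p(q) = \epsilon$ if and only if $p \vdash \phi(x;b)^\epsilon$. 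This last condition is clopen on $p \in X$, giving continuity.

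There is no real obstacle here; the only thing one needs to convince oneself of is that the witnessing $b$ for a given $q$ can indeed be chosen (existence of a full $M$-finitely satisfiable extension of $q|_N$) and that, once chosen, it reduces the test at $q$ to a clopen test on $p$. Both points are immediate from the definitions, and the previous proposition guarantees that the resulting function $f_p$ is well-defined on $S^{fs}_{\phi^{opp}}(\monster,M)$ (i.e.\ independent of the chosen witness), so the argument above is consistent.
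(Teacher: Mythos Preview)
Your proof is correct, and it takes a genuinely different route from the paper's argument. The paper shows the complement is open directly: given $f\notin Inv_\phi(M)$, it forms the partial type $\pi(x)=\{\phi(x;b)^{f(\tp_{\phi^{opp}}(b/N))} : \tp(b/N)$ finitely satisfiable in $M\}$, notes that $\pi$ fails to extend to an $M$-invariant type, and invokes compactness to find a finite subset of $\pi$ that already fails, thereby isolating an open neighbourhood of $f$ missing $Inv_\phi(M)$. Your argument instead packages the situation topologically: you realise $Inv_\phi(M)$ as the image of the compact space of $M$-invariant $\phi$-types under the continuous map $p\mapsto f_p$, and conclude closedness from Hausdorffness of $\Omega$. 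The paper's approach is a one-step compactness trick and avoids setting up the map $\Phi$ explicitly; your approach is slightly more structural and yields the (equivalent, since $\Omega$ is compact) extra information that $Inv_\phi(M)$ is itself compact, at the cost of checking continuity coordinate by coordinate. Both are short and entirely adequate here.
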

\begin{proof}
Let $N\succ M$ be $|M|^+$-saturated and take $f\in \Omega \setminus Inv_\phi(M)$. Consider the partial type $\pi(x)=\{ \phi(x;b)^{\epsilon} : \tp(b/N)$ finitely satisfiable in $M$, $\epsilon=f(\tp_{\phi^{opp}}(b/N))\}$. If $f$ is not in $Inv_\phi(M)$, then $\pi(x)$ does not extend to a global $M$-invariant type. By compactness, some finite part of it does not extend to an $M$-invariant type. This shows that some neighborhood of $f$ in $\Omega$ is disjoint from $Inv_\phi(M)$ and hence $Inv_\phi(M)$ is closed.
\end{proof}

Let $s\in S_{\phi}(M)$ be a $\phi$-type over $M$. Define the function $\hat f_s : S^{fs}_{\phi^{opp}}(\monster, M) \to \{0,1\}$ by $\hat f_s(q) = \epsilon$ if $q\vdash \phi(a;y)^\epsilon$ when $a\models s$. Using $s\to \hat f_s$, we identify $S_\phi(M)$ to a subset of $\Omega$ (the induced topology on $S_\phi(M)$ is in general strictly finer than its natural one).

\begin{Prop}
With notations as above, any function $f_p$ lies in the closure of $S_\phi(M)\subseteq \Omega$.
\end{Prop}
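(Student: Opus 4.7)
The plan is to show that every basic open neighbourhood of $f_p$ in $\Omega$ meets $S_\phi(M)$. Such a neighbourhood is determined by finitely many points $q_1,\ldots,q_n\in S^{fs}_{\phi^{opp}}(\monster, M)$ together with the prescribed values $\epsilon_i:=f_p(q_i)\in\{0,1\}$; so the task reduces to producing some $s\in S_\phi(M)$ with $\hat f_s(q_i)=\epsilon_i$ for every $i$.

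First, for each $i$, extend $q_i$ to a complete global type $\tilde q_i$ finitely satisfiable in $M$, and form the tensor product $\tilde Q(\bar y):=\tilde q_1(y_1)\otimes\cdots\otimes\tilde q_n(y_n)$. A routine check shows the tensor product of coheirs is again a coheir, so $\tilde Q$ is finitely satisfiable in $M$. Pick $N$ with $M\pprec N$ and realise $\bar b=(b_1,\ldots,b_n)\models\tilde Q|_N$. Then each $\tp(b_i/N)$ is finitely satisfiable in $M$ and extends $q_i|_N$, so by the definition of $f_p$ we have $\phi(x;b_i)^{\epsilon_i}\in p$ for every $i$. Setting $\phi'(x;\bar y):=\bigwedge_i\phi(x;y_i)^{\epsilon_i}$, this is an NIP formula and $\phi'(x;\bar b)\in p$.

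Next, $\phi'(x;\bar b)$ does not divide over $M$: for any $M$-indiscernible sequence $(\bar b^j:j<\omega)$ with $\bar b^0=\bar b$, the $M$-invariance of $p$ gives $\phi'(x;\bar b^j)\in p$ for all $j$, so $\{\phi'(x;\bar b^j):j<\omega\}$ is contained in $p$ and therefore consistent. Moreover, $\tilde Q$ is a coheir of $\tp(\bar b/M)$. Applying Theorem \ref{th_coheirpq} to the NIP formula $\phi'$ then yields some $a\in\monster$ with $\tilde Q\vdash\phi'(a;\bar y)$. Decomposing along the product structure of $\tilde Q$, we obtain $\tilde q_i\vdash\phi(a;y)^{\epsilon_i}$ and hence $q_i\vdash\phi(a;y)^{\epsilon_i}$ for every $i$.

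Finally, put $s:=\tp_\phi(a/M)\in S_\phi(M)$. By the very definition of $\hat f_s$ (well-defined since each $q_i$ is finitely satisfiable in $M$, so $\phi(a';y)\in q_i$ depends only on the $\phi$-type of $a'$ over $M$) we have $\hat f_s(q_i)=\epsilon_i$ for every $i$, which exhibits a point of $S_\phi(M)$ inside the chosen neighbourhood of $f_p$. The crux, and the only place NIP is invoked, is the application of Theorem \ref{th_coheirpq} to the tensor-product coheir $\tilde Q$ with the NIP formula $\phi'$; the other verifications are formal consequences of $M$-invariance and the product properties of $\otimes$.
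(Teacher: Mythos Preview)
Your proof is correct and follows essentially the same approach as the paper's: extend each $q_i$ to a complete $M$-finitely satisfiable type, form the tensor product, observe that the conjunction $\bigwedge_i \phi(x;b_i)^{\epsilon_i}$ lies in $p$ and hence does not divide over $M$, and apply Theorem~\ref{th_coheirpq} to obtain the desired $a$. Your version is slightly more explicit in a couple of places---realising $\bar b$ over a saturated $N$ to match the definition of $f_p$ directly, and spelling out the non-dividing argument via $M$-invariance---but the structure and the key step are identical to the paper's argument.
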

\begin{proof}
Let $q_0,\ldots,q_{n-1}$ be $M$-finitely satisfiable $\phi^{opp}$-types. Take $f_p \in \Omega$. We have to find a type $s\in S_\phi(M)$ such that $\hat f_s$ agrees with $f_p$ on $\{q_0,\ldots,q_{n-1}\}$. Pick complete expansions $\tilde q_0,\ldots,\tilde q_{n-1}$ of $q_0,\ldots,q_{n-1}$ respectively, finitely satisfiable in $M$. Let $\tilde q(y_0,\ldots,y_{n-1})=\tilde q_0(x_0)\otimes \cdots \otimes \tilde q_{n-1}(x_{n-1})$. Then $\tilde q$ is finitely satisfiable in $M$. Let $(b_0,\ldots,b_{n-1})\models \tilde q|_M$. Then the formula $$\psi(x;b_0,\ldots,b_{n-1})=\bigwedge_{i<n} \phi(x;b_i)^{f_p(q_i)}$$ is in $p$ and in particular does not divide over $M$. By Theorem \ref{th_coheirpq}, there is $a\in \monster$ such that $\tilde q\vdash \psi(a;\bar y)$. Set $s=\tp(a/M)$.
\end{proof}

We conclude by showing that in some expansion $M'$ of $M$, $Inv_\phi(M')$ is exactly the closure of $S_\phi(M)$ inside $\Omega$.

\begin{Prop}
Let $M'$ be the expansion of $M$ where we have added a predicate to name any externally definable set of the form $\phi(M;b)$, $b\in \monster$. Then $Inv_\phi(M')$ is equal to the closure of $S_\phi(M')=S_\phi(M)$ in $\Omega$.
\end{Prop}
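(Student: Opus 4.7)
I establish both inclusions of the claimed equality.

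The inclusion $Inv_\phi(M') \subseteq \overline{S_\phi(M')}$ follows by applying the previous proposition to $T' := \text{Th}(M')$. Since $\phi \in L \subseteq L^+$, any IP pattern for $\phi$ witnessed in a model of $T'$ restricts to one in its $L$-reduct (a model of $T$), so $\phi$ remains NIP in $T'$; hence Theorem \ref{th_coheirpq} and the preceding proposition apply verbatim in $T'$.

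For the reverse inclusion, the earlier lemma applied in $T'$ gives that $Inv_\phi(M')$ is closed in $\Omega$, so it suffices to prove $S_\phi(M') \subseteq Inv_\phi(M')$. Given $s \in S_\phi(M') = S_\phi(M)$, fix a realisation $a \in \monster$ of $s$, and construct $\monster'$ as the $L^+$-expansion of a sufficiently saturated $L$-extension of $\monster$ in which each added predicate $P_b$ (naming $\phi(M;b)$) is interpreted as $\phi(\cdot;b)$. By $L^+$-saturation of $\monster'$, there is $a^* \in \monster'$ realising the partial $L^+$-type over $M$ given by $s$ together with $\{P_b(x) \leftrightarrow \phi(a;b) : b \in \monster\}$. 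Set $p := \tp_\phi(a^*/\monster')$.

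The identity $f_p = \hat f_s$ is then a routine verification: given $q \in S^{fs}_{\phi^{opp}}(\monster', M')$, take $b \in M$ realising $q|_N$ for a suitable $|M'|^+$-saturated $N \ni a$; combining $P_b(a^*) \leftrightarrow \phi(a;b)$ (by the choice of $a^*$) and $P_b(a^*) \leftrightarrow \phi(a^*;b)$ (by the interpretation of $P_b$) yields $\phi(a^*;b) \leftrightarrow \phi(a;b) \leftrightarrow (\phi(a;y) \in q) \leftrightarrow \hat f_s(q)$, using that $q$, as a $\phi^{opp}$-type over $\monster' \supseteq \monster \ni a$, already decides $\phi(a;y)$. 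The substantive obstacle is verifying that $p$ is $M'$-invariant, i.e.\ that $\phi(a^*;c)$ depends only on $\tp_{L^+}(c/M')$ for $c \in \monster'$; this is where the form of the expansion enters, via a local form of Shelah's theorem on externally definable sets asserting that the predicates $P_b$ encode enough of the $\phi$-behavior of elements of $\monster$ relative to $M$ to force the required invariance.
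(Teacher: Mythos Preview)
Your argument has two genuine gaps.

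First, your construction of $\monster'$ is incorrect. You propose to take a saturated $L$-elementary extension $\monster_1 \succ \monster$ and interpret each $P_b$ as $\phi(\monster_1;b)$. But the resulting structure need not be an $L^+$-elementary extension of $M'$. For instance, the $L^+$-sentence $\exists x\,(P_b(x)\wedge P_{b'}(x))$ holds in your $\monster'$ iff $\phi(x;b)\wedge\phi(x;b')$ is consistent, whereas in $M'$ it holds iff $\phi(M;b)\cap\phi(M;b')\neq\emptyset$; these can easily differ. So you are not working in a monster model of $T'$ at all, and everything built on top of this (the element $a^*$, the type $p$) is not well-posed.

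Second, and more fundamentally, you yourself flag the ``substantive obstacle'' --- the $M'$-invariance of $p=\tp_\phi(a^*/\monster')$ --- and then do not prove it, appealing instead to an unnamed ``local form of Shelah's theorem''. That is precisely the content of the proposition; waving at it is not a proof. (There is also a smaller slip in your verification of $f_p=\hat f_s$: you take $b\in M$ realising $q|_N$, but $q|_N$ is a type over an $|M'|^+$-saturated $N$ and is typically not realised in $M$.)

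The paper's route avoids all of this. The point of the expansion is exactly that every $\phi$-type $s$ over $M$ becomes \emph{definable} in $M'$: the new predicates name the relevant externally definable traces, so the defining scheme for $s$ is now an $L'(M')$-formula. One then takes $p$ to be the global $M'$-definable extension of $s$; invariance is immediate from definability, and $\hat f_s=f_p$ follows at once. No special realisation $a^*$, no appeal to Shelah's theorem, and no delicate choice of monster model is needed.
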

\begin{proof}
Using the two previous lemmas, it suffices to show that any function of the form $\hat f_s$, $s\in S_\phi(M)$ corresponds to an invariant $\phi$-type in the sense of $M'$. But we have done exactly what is needed to ensure that all $\phi$-types over $M$ are definable. Hence $\hat f_s=f_p$ where $p$ is the global definable extension of $s$.
\end{proof}

\subsection{Definable $(p,q)$}

The following is a definable version of Proposition \ref{prop_corpq}, which was conjectured in \cite{ExtDef2} (assuming the full theory is NIP), in fact before the link with the $(p,q)$-theorem was noticed.

\begin{conj}\label{conj_pq}
Assume that the formula $\phi(x;y)$ is NIP. Let $M$ be a model and $\phi(x;b)\in L(\monster)$ non-dividing over $M$. Then there is a formula $\psi(y)\in \tp(b/M)$ such that the partial type $\{\phi(x;b):b\in \psi(M)\}$ is consistent.
\end{conj}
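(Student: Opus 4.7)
My plan would be to try to promote the finite-cover structure given by Proposition \ref{prop_corpq} into a \emph{locally definable} partition near $\tp(b/M)$. First, apply Corollary \ref{cor_low} to fix a formula $\psi_0(y)\in\tp(b/M)$ such that $\phi(x;b')$ does not divide over $M$ for every $b'\in\psi_0(\monster)$. Then Proposition \ref{prop_corpq} (whose proof now rests on the model-theoretic Theorem \ref{th_coheirpq} rather than on Fact \ref{fact_pq}) produces finitely many global types $p_0,\ldots,p_{N-1}\in S_x(\monster)$ such that $\phi(x;b')\in p_i$ for some $i$, whenever $b'\in\psi_0(\monster)$. Pick $i^*$ with $\phi(x;b)\in p_{i^*}$, and consider the set $D_{i^*}=\{b'\in\monster:\phi(x;b')\in p_{i^*}\}$. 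It suffices to find $\psi(y)\in\tp(b/M)$ with $\psi(\monster)\cap\psi_0(\monster)\subseteq D_{i^*}$, because then $\{\phi(x;b'):b'\in\psi(M)\cap\psi_0(M)\}$ is a subset of $p_{i^*}$ and hence consistent.

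The second step is to ensure that $D_{i^*}$ contains a definable neighborhood of $b$ inside $\psi_0(\monster)$. The set $D_{i^*}$ is automatically $\Aut(\monster/M)$-invariant (hence a union of $M$-types on the $y$ side), so the question is topological: is the image of $D_{i^*}$ in $S_y(M)$ open at $\tp(b/M)$? A natural way to attack this is to pick the $p_i$ to be \emph{coheirs} provided by Theorem \ref{th_coheirpq}: each such $p_i$ is $M$-finitely satisfiable, so $\phi(x;b')\in p_i$ can be expressed as a conjunction of conditions $\phi(a;b')$ with $a\in\monster$, making $D_i$ an intersection of externally definable sets. If one could reduce each such intersection to a finite one, the desired formula $\psi$ would fall out by compactness.

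The hard part is precisely this reduction: there is no a priori reason why the externally definable description of $D_{i^*}$ should involve only finitely many $\phi(a;y)$. This is where an additional hypothesis naturally enters. Under small or medium directionality, any type over $M$ has few coheirs, so the external description can indeed be trimmed to finitely many witnesses, and one recovers the conclusion; this is the route taken in Theorem \ref{th_pqdef} of the paper. Without such a hypothesis I do not see how to prevent the partition of $\psi_0(M)$ into the sets $\{b'\in\psi_0(M):\phi(x;b')\in p_i\}$ from being wild enough to prevent any neighborhood of $\tp(b/M)$ from landing in a single piece, and in fact this appears to be exactly why the statement is only a conjecture in general. A potential alternative approach would be to assume, toward a contradiction, the failure of the conclusion for every $\psi\in\tp(b/M)$, use compactness to build an $M$-indiscernible sequence $(b_n)_{n<\omega}$ in $\tp(b/M)$ together with inconsistencies forcing high alternation of $\phi$ on a realisation of a suitable coheir, and extract an NIP violation via an eventually indiscernible subsequence as in Lemma \ref{lem_frechet}; but turning such combinatorics into an actual contradiction is essentially equivalent to the original difficulty.
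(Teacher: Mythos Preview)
The statement you are attempting is stated as a \emph{conjecture} in the paper and is not proved there in full generality; the paper only establishes it (Theorem~\ref{th_pqdef}) under the additional hypothesis that $\tp(b/M)$ has at most countably many global coheirs, after reducing to countable $L$ and $M$. You recognise this yourself, and your overall diagnosis --- that the obstruction lies in controlling how the finitely many covering types $p_i$ interact with a neighbourhood of $\tp(b/M)$ --- is reasonable.

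Two corrections on the details. First, your description of the route taken in Theorem~\ref{th_pqdef} is not accurate: that proof does not pick finitely satisfiable types $p_i$ and analyse the sets $D_i$. Instead it iterates the alternation argument of Theorem~\ref{th_coheirpq} once for each of the countably many coheirs $q^0,q^1,\ldots$ of $\tp(b/M)$, starting from a strict Morley sequence so that the growing partial $\phi$-type $\pi_n(x)$ stays non-forking over $M$ at every stage. The output is a single element $a_*$ with $\phi(a_*;y)$ in every coheir, which is equivalent to the conjecture by the lemma immediately preceding Theorem~\ref{th_pqdef}.

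Second, your claim that for $M$-finitely satisfiable $p_i$ the condition $\phi(x;b')\in p_i$ ``can be expressed as a conjunction of conditions $\phi(a;b')$'' is not justified. If $p_i=\lim_{\mathcal D}\tp(a/\monster)$ for an ultrafilter $\mathcal D$ on $M$, then $D_i=\{b':\{a\in M:\models\phi(a;b')\}\in\mathcal D\}$; this set is $M$-invariant but in general neither open nor closed in $S_y(M)$, and there is no evident way to write it as an intersection of sets of the form $\phi(a;\monster)$. So even granting a bound on the number of coheirs, the reduction you sketch does not go through as written; the paper's iterative construction sidesteps this issue entirely.
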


Conjecture \ref{conj_pq} reduces to the case where $L$ is countable because we can take a countable sublanguage containing $\phi(x;y)$. Then we may assume that $M$ is countable, because if $\phi(x;b)$ does not divide over $M$, then there is a countable $M'\prec M$ over which $\phi(x;b)$ does not divide.
We will now prove this conjecture assuming that $L$ and $M$ are countable and that $\tp(b/M)$ has only countably many coheirs.

\begin{lemme}
($L$ is countable) Conjecture \ref{conj_pq} is equivalent to the following statement:

$(**)\quad $ Let $M$ be a countable model, $q\in S_y(M)$. Assume that for $b\models q$, $\phi(x;b)$ does not divide over $M$. Then there is some $a\in \monster$ such that for any global coheir $\tilde q$ of $q$, $\tilde q\vdash \phi(a;y)$.
\end{lemme}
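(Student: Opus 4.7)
The text has already reduced Conjecture \ref{conj_pq} to the case where both $L$ and $M$ are countable, so it suffices to prove that, in the countable setting, the conjecture is equivalent to $(**)$. The key observation is a clean duality: the existence of a ``uniform witness'' $a$ realising $\{\phi(x;b') : b' \in \psi(M)\}$ for some $\psi(y) \in \tp(b/M)$ is literally the contrapositive of the statement that every $M$-coheir of $\tp(b/M)$ contains $\phi(a;y)$, via finite satisfiability.

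\textbf{Conjecture \ref{conj_pq} $\Rightarrow (**)$.} Let $M$ be countable and $q \in S_y(M)$ satisfy the hypothesis of $(**)$, and fix any realisation $b \models q$ in $\monster$. Since $\phi(x;b)$ does not divide over $M$, Conjecture \ref{conj_pq} yields a formula $\psi(y) \in q$ and an element $a \in \monster$ realising the partial type $\{\phi(x;b') : b' \in \psi(M)\}$. Now let $\tilde q$ be any global coheir of $q$. Since $\psi \in q \subseteq \tilde q$, if $\tilde q \vdash \neg \phi(a;y)$ then the formula $\psi(y) \wedge \neg \phi(a;y)$ belongs to $\tilde q$ and hence, by finite satisfiability in $M$, is realised by some $b' \in M$. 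But then $b' \in \psi(M)$ while $\neg \phi(a;b')$ holds, contradicting the choice of $a$. Therefore $\tilde q \vdash \phi(a;y)$, as required.

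\textbf{$(**) \Rightarrow$ Conjecture \ref{conj_pq}.} Given the reduction, take $L$ and $M$ countable, and let $\phi(x;b) \in L(\monster)$ not divide over $M$. Set $q = \tp(b/M)$. Apply $(**)$ to obtain $a \in \monster$ such that $\tilde q \vdash \phi(a;y)$ for every global coheir $\tilde q$ of $q$. Suppose, towards contradiction, that no $\psi(y) \in q$ satisfies $\psi(M) \subseteq \phi(a;M)$; then for every $\psi(y) \in q$ there is $b' \in M$ with $\psi(b') \wedge \neg \phi(a;b')$. This means the partial type $q(y) \cup \{\neg \phi(a;y)\}$ is finitely satisfiable in $M$, so it extends to a global type $\tilde q$ that is a coheir of $q$ containing $\neg \phi(a;y)$, contradicting the choice of $a$. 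Hence there exists $\psi(y) \in q$ with $\psi(M) \subseteq \phi(a;M)$, i.e., $a$ realises $\{\phi(x;b') : b' \in \psi(M)\}$.

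\textbf{Main difficulty.} There is essentially no technical obstacle: both directions are instances of the standard duality between $M$-definable conditions on $\tp(b/M)$ and properties of its coheirs, expressed via finite satisfiability in $M$. The real content of introducing $(**)$ is not the equivalence itself but the reformulation it provides: $(**)$ puts Conjecture \ref{conj_pq} into the same shape as $(*)$ of Proposition \ref{prop_pqequiv}, where Theorem \ref{th_coheirpq}-style arguments (building a Frech\'et sequence witnessing non-convergence, then extracting a contradiction via NIP and Lemma \ref{lem_low}) can be deployed. The only mild care is to make sure, in the $(**) \Rightarrow$ Conjecture direction, that the countable reduction sketched in the paragraph preceding the lemma is applicable --- i.e., that non-dividing over $M$ passes to a suitable countable $M' \prec M$ so that $(**)$ applied with $M'$ still produces the desired $\psi \in \tp(b/M)$.
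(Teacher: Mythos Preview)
Your proof is correct and follows essentially the same approach as the paper. The only minor difference is in the $(**) \Rightarrow$ Conjecture direction: the paper builds a sequence $(b_i:i<\omega)$ in $M\setminus \phi(a;M)$ converging to $q$ and takes an ultrafilter limit, whereas you argue more directly that $q(y)\cup\{\neg\phi(a;y)\}$ is finitely satisfiable in $M$ and extend to a coheir --- but this is the same compactness/finite-satisfiability idea in slightly slicker form.
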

\begin{proof}
The reduction to a countable $M$ is explained above.

Assume that $(**)$ holds and let $q$ be such that $\phi(x;b)$ does not divide over $M$ whenever $b\models q$. Let $a$ be given by $(**)$. Then the set $\phi(a;M)$ must contain a subset $\psi(M)$, with $\psi(y)\in q$: Assume not, then we can easily build a sequence $(b_i:i<\omega)$ of points of $M\setminus \phi(a;M)$ such that $\tp(b_i/M)$ converges to $q$. For any non-principal ultrafilter $\mathcal D$ on $\omega$, the limit type $\lim_{\mathcal D} \tp(b_i/\monster)$ is a global coheir of $q$ which does not satisfy $\phi(a;y)$.

Conversely, if the conjecture holds, let $\psi(y)$ be given by it. Then pick $a\in \monster$ such that $\phi(a;b)$ holds for all $b\in \psi(M)$. The formula $\phi(a;y)$ is in every coheir of $q$.
\end{proof}

In the following theorem, we assume NIP of the whole theory and not only of one formula $\phi(x;y)$.
\begin{thm}\label{th_pqdef}
Assume that $L$ is countable and $T$ is NIP. Let $M$ be a countable model and $\phi(x;b)\in L(\monster)$ non-dividing over $M$. Assume that $q=\tp(b/M)$ has only countably many global coheirs. Then there is $a\in \monster$ such that $\phi(a;y)$ is in every global coheir of $q$.
\end{thm}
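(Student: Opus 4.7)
My approach is to use the equivalence of the theorem with statement $(\ast\ast)$ established in the previous lemma: the goal becomes to find $a \in \monster$ such that $\phi(a;y) \in \tilde q$ for every global coheir $\tilde q$ of $q$. Enumerate the coheirs of $q$ as $\tilde q_0, \tilde q_1, \ldots$ (possible by the countability hypothesis). The plan is to produce $a$ by realising a suitable partial type, which, by saturation of $\monster$ and compactness, reduces to the following finite statement: for every finite $F \subseteq \omega$, there is $a \in \monster$ with $\phi(a;y) \in \tilde q_n$ for every $n \in F$.

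For a finite $F = \{n_1, \ldots, n_k\}$, I form the product coheir $\tilde q^{\star} := \tilde q_{n_1}(y_1) \otimes \cdots \otimes \tilde q_{n_k}(y_k)$. This is still finitely satisfiable in $M$ and hence is a coheir of $\tp(\bar b/M)$ for any $\bar b = (b_1, \ldots, b_k) \models \tilde q^\star|_M$. Define $\phi'(x;\bar y) := \bigwedge_{j} \phi(x;y_j)$, which is still NIP. Unwinding the definition of $\otimes$, the condition that $\phi(a;y) \in \tilde q_{n_j}$ for every $j$ is equivalent to $\phi'(a;\bar y) \in \tilde q^\star$. Thus Theorem~\ref{th_coheirpq} applied to $\phi'$ and $\tilde q^\star$ produces the desired $a$, \emph{provided} that the formula $\phi'(x;\bar b)$ does not divide over $M$.

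The main obstacle, and the only place where the countability-of-coheirs hypothesis must be used, is to establish the non-dividing of $\phi'(x;\bar b)$. My plan here is a proof by contradiction modelled on the proof of Theorem~\ref{th_coheirpq} itself: if $\phi'(x;\bar b)$ divided over $M$, one would obtain an $M$-indiscernible sequence witnessing inconsistency of the family $\{\phi'(x;\bar b^{(i)}) : i<\omega\}$; then, applying Lemma~\ref{lem_frechet} separately to each factor coheir $\tilde q_{n_j}$ to replace its Morley sequence by a converging sequence from $M$, one attempts to build a diagonal sequence $(b_m : m<\omega) \subseteq M$ along which a partial type combining the local non-dividing witnesses for each $\tilde q_{n_j}$ with an alternation condition on $\phi'$ violates NIP of $\phi'$. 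The countability hypothesis is used to ensure that the various auxiliary coheirs arising through the construction (from products and projections of the $\tilde q_{n_j}$'s) remain in a controlled, enumerable family, so that the diagonal argument closes in $\omega$ stages rather than demanding uncountably many. Without this combinatorial control one would effectively be proving the full Conjecture~\ref{conj_pq}, which is why this step is the main difficulty; the surrounding reductions (to the finite case, and the appeal to Theorem~\ref{th_coheirpq}) are formal.
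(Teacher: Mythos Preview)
Your reduction to finitely many coheirs via compactness is the fatal gap. The condition ``$\phi(a;y)\in\tilde q_n$'' is \emph{not} a partial type in $a$: since $\tilde q_n$ is $M$-invariant, this condition depends only on $\tp(a/M)$, but the corresponding subset of $S_x(M)$ need not be closed. Indeed, if $\{a:\tilde q\vdash\phi(a;y)\}$ were type-definable over $M$ for every $\phi$, then so would its complement (replace $\phi$ by $\neg\phi$), making each such set clopen and hence $\tilde q$ definable. So for any non-definable coheir your sets are not closed, and the finite-intersection property does not yield a common point. Worse: your other step, the non-dividing of $\phi'(x;\bar b)$, is actually \emph{trivial} and requires no countability. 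Since $\phi(x;b)$ does not divide over $M$ it lies in some global $M$-invariant type $p$, and by $M$-invariance $p\vdash\phi(x;b')$ for every $b'\equiv_M b$; in particular $p\vdash\bigwedge_j\phi(x;b_j)$, so $\phi'(x;\bar b)$ does not fork over $M$. Thus your argument, if the compactness step were valid, would never use the countability of coheirs and would prove Conjecture~\ref{conj_pq} outright. That should already signal that something is wrong.

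The paper's proof takes a genuinely different route. Rather than handling finitely many coheirs at once and appealing to compactness, it processes the coheirs one at a time in an $\omega$-length iteration, building an increasing chain $\pi_0\subseteq\pi_1\subseteq\cdots$ of partial $\phi$-types over a fixed set. The new idea (absent from Theorem~\ref{th_coheirpq}) is to start from a \emph{strict} Morley sequence $(b'_i)$ of $\tp(b/M)$: the witnessing property of strict non-forking guarantees that whenever $\pi_0(x)\wedge\psi(x)$ is consistent with $\psi\in L(M)$, it is already non-forking over $M$. This lets one realise each $\pi_n$ by an element $a_*$ whose type over $Nb'_*$ is $M$-invariant; invariance (rather than the VC-dimension argument of the Claim in Theorem~\ref{th_coheirpq}) then forces $\phi(a_*;b'_*)$. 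After $\omega$ steps, any $a\models\pi_\omega$ works for every coheir simultaneously. Countability of the coheirs is used exactly to make this iteration terminate in $\omega$ steps.
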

\begin{proof}
Fix some $N$ containing $M$ and $|M|^+$-saturated. Let $(b'_i:i<\omega)$ in $N$ be a strict Morley sequence in $\tp(b/M)$ over $M$ and $\pi_0(x)=\{\phi(x;b'_i):i<\VC^*(\phi)+1\}$. Then, by the facts recalled in Section \ref{sec_fork}, for any $\psi(x)\in L(M)$, if $\pi_0(x)\wedge \psi(x)$ is consistent, then it is non-forking over $M$.

Let $q^0,q^1,\ldots$ list the coheirs of $q=\tp(b/M)$. Write $q^0|_{MI}(y)=\bigcup_{n<\omega} q^0_n(y)$ where each $q^0_n$ is finite, and build as in Theorem \ref{th_coheirpq} a maximal sequence $(b_i:i<n)$ of points in $M$ such that:

$\bullet_1$ $\pi_0(x) \cup \{ \phi(x;b_{i-1})\leftrightarrow \neg \phi(x;b_{i}) : 0<i<n\}$ is consistent.

$\bullet_2$ $b_i\models q_i^{0}(y)$.

Let $\pi'_1(x)=\pi_0(x) \cup \{ \phi(x;b_{i-1})\leftrightarrow \neg \phi(x;b_{i}) : 0<i<n\}$. Then $\pi'_1(x)$ is consistent and thus does not fork over $M$. Let $b'_*$ realise $q^0_n|_{MI}$ and $a_*\models \pi'_1(x)$ such that $\tp(a_*/Nb'_*)$ is $M$-invariant. Then as $\phi(a_*;b'_1)$ holds, also $\phi(a_*;b'_*)$ holds. We then conclude as in Theorem \ref{th_coheirpq} that for any $b\in q_n^0(M)$, we have $\models \phi(a_*;b)$.

Set $\pi_1(x)$ to be the union of $\pi_0(x)$ and the $\phi$-type of $a_*$ over $\{b_i:i<n\}$ and iterate the construction with $q^1$ instead of $q^0$ and $\pi_1(x)$ instead of $\pi_0(x)$. After $\omega$ steps, we obtain a consistent partial type $\pi_\omega(x)=\bigcup \pi_n(x)$ over $N$ with the property that if $a_*\models \pi_\omega(x)$ and $b_*\models q^m|_{Ma_*}$ for some $m$, then $\phi(a_*;b_*)$ must hold. Hence $\phi(a_*;y)$ is in $q^m$ and we have what we were looking for.
\end{proof}

Actually, this proof also works if the space of global coheirs of $q$ is only assumed to be separable. Take the $q^n$'s to form a dense set and build $\pi_\omega$ as above. Let $a\models \pi_\omega$. Then the formula $\neg \phi(a;y)$ defines an open set in the space of coheirs of $q$. We know that it cannot contain any of the $q^n$'s. Hence it is empty.

\begin{qu}
Let $M$ be a countable pseudofinite NIP structure on a countable language, then does every type over $M$ have countably many coheirs?
\end{qu}

\section{The $F_M$ retraction}\label{sec_fm}

We now move to another topic, which is only loosely connected to the previous section. Our aim is to construct and study a canonical retraction from the space of $M$-invariant types to that of $M$-finitely satisfiable types. It is still slightly mysterious why such a retraction exists, but it turns out to be rather useful. It will permit us to give a more conceptual proof of the dichotomy proved in \cite{invdp}: an $M$-invariant dp-minimal type is either definable or finitely satisfiable in $M$. Also, it is used in \cite{ChePilSim} with A. Chernikov and A. Pillay to study how some notions concerning groups are invariant under naming externally definable sets.

\smallskip\noindent
\textbf{Assumption}: Throughout this section, we assume that $T$ is NIP.

\subsection{The construction}

Let $L_{\pred}=L\cup \{\pred(x)\}$ where $\pred$ is a new unary predicate. An expansion of a model of $T$ to $L_{\pred}$ is called a pair. We will always write pairs as $(M,A)$, where $M$ is the universe of the structure and $A=\pred(M)$ is the interpretation of $\pred$ in $M$. Let $M\models T$ and let $M\prec^+ M'$ and consider the pair $(M',M)$. Let also $(N',N)$ be an $|M'|^+$-saturated elementary extension of $(M',M)$.

Let $p$ be a global $M$-invariant type (in the language $L$).

\smallskip \noindent
\underline{Claim}: The partial $L_{\pred}$-type $p(x)|_N \cup \pred(x)$ implies a complete $L$-type over $M'$.

Proof: Assume not. Then for some $L$-formula $\phi(x;b)\in L(M')$, $p(x)|_N\cup \pred(x)$ is consistent both with $\phi(x;b)$ and with $\neg \phi(x;b)$. We can then construct inductively a sequence $(a_i:i<\omega)$ of points of $N$ such that:

$\cdot$ $a_{2i}\models p(x)|_{Ma_{<2i}} \cup\pred(x) \cup \phi(x;b)$;

$\cdot$ $a_{2i+1} \models p(x)|_{Ma_{<2i+1}} \cup \pred(x) \cup \neg\phi(x;b)$.

In the reduct to $L$, the sequence $(a_i:i<\omega)$ is a Morley sequence of $p$ and as such is $L$-indiscernible. The formula $\phi(x;b)$ alternates infinitely often on it and this contradicts NIP, proving the claim.\hfill$\dashv$

\smallskip
Therefore there is a unique $L$-type $q(x) \in S(M')$ such that $p(x)|_N \cup \pred(x) \cup q(x)$ is consistent. As $q(x)$ is consistent with $\pred(x)$ it must be finitely satisfiable in $M$. As $M'$ is $|M|^+$-saturated, $q$ extends uniquely to a global $M$-finitely satisfiable $L$-type $\tilde q$. We now define $F_M(p)$ to be equal to $\tilde q$. It is not hard to check that this is well defined, {\it i.e.}, depend on neither $M'$ nor $(N',N)$.

\begin{lemme}
Let $M\models T$, then the map $F_M$ from $M$-invariant types to $M$-finitely satisfiable types has the following properties:

(i) $F_M(p)|_M = p|_M$;

(ii) $F_M$ is continuous;

(iii) if $p$ is finitely satisfiable in $M$, then $F_M(p)=p$;

(iv) for any $M$-definable function $f$, $f_*(F_M(p))=F_M(f_*(p))$.
\end{lemme}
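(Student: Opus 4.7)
The plan is to verify each property directly from the characterization implicit in the construction: $F_M(p)$ is the unique global $M$-finitely satisfiable $L$-type whose restriction to $M'$ equals the unique complete $L$-type $q\in S(M')$ consistent with the partial $L_{\pred}$-type $p(x)|_N\cup \pred(x)$. First I would record the easy preliminary that $p(x)|_N\cup \pred(x)$ is always consistent: any finite $\pi\subseteq p|_N$ is realised in $N$ (pull back an $\monster$-realisation by elementarity), so by $|M'|^+$-saturation of $(N',N)$ one finds a realisation inside $\pred(N')=N$.

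Part (i) is then immediate: since $p|_{M'}\subseteq p|_N$ is consistent with $p|_N\cup \pred(x)$, the uniqueness of $q$ forces $q=p|_{M'}$, hence $F_M(p)|_M=p|_M$. For (iii), if $p$ is already $M$-finitely satisfiable, the same remark gives $F_M(p)|_{M'}=p|_{M'}$; the $|M|^+$-saturation of $M'$ together with $M$-invariance forces any $M$-finitely satisfiable global type to be determined by its restriction to $M'$ (given $\phi(x;b)$ with $b\in \monster$, choose $b'\in M'$ with $\tp(b'/M)=\tp(b/M)$ and apply invariance), so $F_M(p)=p$.

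For (ii), fix a formula $\phi(x;c)$ and enlarge $M'$ so that $c\in M'$. Then $\phi(x;c)\in F_M(p)$ iff the partial $L_{\pred}$-type $p|_N\cup \pred(x)\cup\{\neg\phi(x;c)\}$ is inconsistent; by compactness, this happens iff there is a finite $\pi\subseteq p|_N$ with $(N',N)\models \forall x(\pred(x)\wedge \pi(x)\to \phi(x;c))$. Hence $\{p\text{ $M$-invariant}:\phi(x;c)\in F_M(p)\}$ is a union of basic clopens $[\pi]$, therefore open; applying the same analysis to $\neg\phi(x;c)$ shows the complement is also open, so the set is in fact clopen. Since the formulas $\phi(x;c)$ generate the topology on the space of $M$-finitely satisfiable types, this yields continuity.

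Finally, for (iv), pick $a\in \pred(N')=N$ realising $p|_N$, so that $F_M(p)|_{M'}=\tp_L(a/M')$. Since $f$ is $M$-definable and $M\subseteq N$, the model $N$ is closed under $f$; set $b:=f(a)\in \pred(N')$. From $a\models p|_N$ we obtain $b\models (f_*p)|_N$, so $\tp_L(b/M')=F_M(f_*p)|_{M'}$ on one hand, while $\tp_L(b/M')=f_*(\tp_L(a/M'))=f_*(F_M(p))|_{M'}$ on the other. Both $F_M(f_*p)$ and $f_*(F_M(p))$ are $M$-finitely satisfiable (the latter because pushforward by an $M$-definable function preserves $M$-finite satisfiability), so by the uniqueness invoked in (iii) they agree globally. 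The only mildly delicate point is (ii), where one must recognise membership in $F_M(p)$ as a first-order clopen condition on $p$ via the pair; the remaining parts are bookkeeping around a single generic witness in $\pred$.
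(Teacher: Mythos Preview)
Your overall strategy matches the paper's, but part (i) contains a genuine error. You assert $p|_{M'}\subseteq p|_N$, which is false: in the setup $(N',N)\succ (M',M)$ we have $N=\pred(N')$, and every element of $M'\setminus M$ fails $\pred$, so $M'\cap N=M$ and $M'\not\subseteq N$. Your intermediate conclusion $q=p|_{M'}$ (equivalently $F_M(p)|_{M'}=p|_{M'}$) is therefore unjustified and in fact false in general; the paper's DLO example has $F_M(p_{\mathrm{def}})=p_{\mathrm{fs}}$, and these disagree already over $M'$. The repair is to use $M$ in place of $M'$: since $M\subseteq N$ one has $p|_M\subseteq p|_N$, whence $q\supseteq p|_M$ and $F_M(p)|_M=p|_M$. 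Note that your (iii) invokes ``the same remark''; once (i) is corrected, (iii) needs its own one-line justification: when $p$ is $M$-finitely satisfiable, every finite subset of $p|_{N'}$ is realised in $M\subseteq\pred$, so $p|_{M'}\cup p|_N\cup\pred(x)$ is consistent and indeed $q=p|_{M'}$ in that case (this is exactly the paper's argument).

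There is also a minor technical slip in your preliminary and in (iv): $|M'|^+$-saturation of $(N',N)$ does not suffice to realise the full type $p|_N\cup\pred(x)$, since $p|_N$ has $|N|$ formulas and $|N|$ may exceed $|M'|$. What you have actually shown is finite satisfiability in $(N',N)$, which is all that is needed for the Claim and for (ii). For (iv), either pass to a further saturated elementary extension of $(N',N)$ to obtain your witness $a\in\pred$ with $a\models p|_N$, or run the argument formula by formula over $M'$. With these corrections, your proof coincides with the paper's.
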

\begin{proof}
Point (i) is clear. Point (ii) is by compactness: given a formula $\phi(x;b)$ over $M'$, if $F_M(p)\models \phi(x;b)$, then there is some finite part $p_0$ of $p|_N$ such that $p_0(x)\cup \pred(x)$ implies $\phi(x;b)$.

If $p$ is finitely satisfiable in $M$, then $p(x)|_{N'}\cup \pred(x)$ is a consistent $L_\pred$-type, hence $F_M(p)=p$.

Point (iv) is easy to check from the definition.
\end{proof}

\begin{rem}
Beware that in general we do not have $F_M(p\otimes q)=F_M(p) \otimes F_M(q)$.
\end{rem}

\begin{ex}
Take $T$ to be DLO and $M\models T$. Let $a\in M$ and consider the type $p\in S(M)$ such that $p\vdash x>c \iff c\leq a$. Then $p$ has two global $M$-invariant extensions: a definable one $p_{def}$ and an $M$-finitely satisfiable one $p_{fs}$. Then $F_M(p_{def})=F_M(p_{fs})=p_{fs}$.
\end{ex}

\subsection{The reverse type}

We present another construction which associates a finitely satisfiable type to an invariant type, but this time over a larger base. It already appeared in \cite{distal}.

Let $p$ be an $M$-invariant type and let $M\prec^+ N$.

\smallskip
\underline{Claim}: Given any $b\in \monster$, there is some $B\subset N$ of size $|M|$ such that any two realisations of $p|_B$ in $N$ have the same type over $Mb$.

Proof: Assume not. Then we can build inductively a sequence $(a_{i,0}a_{i,1}:i<|M|^+)$ in $N$ such that $a_{i,0},a_{i,1}$ both realize $p$ over $Ma_{<i,0}a_{<i,1}$, but $a_{i,0}, a_{i,1}$ do not have the same type over $Mb$. Then for any $\eta: |M|^+ \to \{0,1\}$, the sequence $(a_{i,\eta(i)} : i<|M|^+)$ is indiscernible (it is a Morley sequence of $p$). Pruning the sequence, we may assume that for some formula $\phi(x)\in L(Mb)$, we have $\models \phi(a_{i,0})\wedge \neg \phi(a_{i,1})$ for all $i$. Taking $\eta$ to alternate infinitely often between 0 and 1, we contradict NIP.\hfill$\dashv$

\smallskip
We can now define a global type $R_N(p)$ as follows: Let $b\in \monster$ and take $B$ as given by the claim. Let $a$ be a realisation of $p|_B$ in $N$ and set $R_N(p)|_{Mb}=tp(a/Mb)$. It is easy to see that the different restrictions of $R_N(p)$ defined are compatible and thus we obtain a global type $R_N(p)$. By construction, this type is  $N$-finitely satisfiable and its restriction to $N$ coincides with $p|_N$.

We call $R_N(p)$ the \emph{reverse type} of $p$ over $N$. The reason for this terminology comes from point (i) below.

\begin{lemme}\label{lem_reverse}
The following facts hold:

(i) For any $M$-invariant type $q$, we have $$(R_N(p)_x\otimes q_y)|_N= (q_y\otimes p_x)|_N.$$

(ii) The type $R_N(p)$ commutes with any $M$-invariant type (in particular with $p$).
\end{lemme}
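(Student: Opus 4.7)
The plan is to prove (i) and (ii) by a single common strategy: pick a realisation $(a,b)$ of one side of the putative equality and show that the same pair realises the other side. For (i) I would begin with $(a,b)\models (q_y\otimes p_x)|_N$, so $a\models p|_N$ and $b\models q|_{Na}$. The clause $b\models q|_N$ needed for $(a,b)\models(R_N(p)_x\otimes q_y)|_N$ is automatic, so the entire content of (i) reduces to proving that $a\models R_N(p)|_{Nb}$.

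To establish $a\models R_N(p)|_{Nb}$ I would work formula by formula in $\bar n\in N$. The defining claim of $R_N(p)$ applied to the tuple $(b,\bar n)$ yields $B\subset N$ of size $|M|$ with $M\bar n\subseteq B$ such that every $\tilde a\in N$ realising $p|_B$ has $\tp(\tilde a/Mb\bar n)=R_N(p)|_{Mb\bar n}$; fix such a $\tilde a$. Since $a\models p|_N\supseteq p|_{M\bar n}$ and likewise for $\tilde a$, we have $\tp(a/M\bar n)=p|_{M\bar n}=\tp(\tilde a/M\bar n)$, so some $\sigma\in\mathrm{Aut}(\monster/M\bar n)$ sends $a$ to $\tilde a$. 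The heart of the argument is then $\sigma(b)\equiv_{M\bar n\tilde a}b$: on one hand $\tp(\sigma(b)/M\bar n\tilde a)=\sigma(q|_{M\bar na})=q|_{M\bar n\tilde a}$ by $M$-invariance of $q$, and on the other hand $\tilde a\in N$ makes $\tp(b/M\bar n\tilde a)$ the restriction of $b\models q|_N$ to $M\bar n\tilde a$, which is $q|_{M\bar n\tilde a}$. This gives $(a,b)\equiv_{M\bar n}(\tilde a,b)$, hence $\tp(a/M\bar nb)=\tp(\tilde a/M\bar nb)=R_N(p)|_{M\bar nb}$; letting $\bar n$ vary yields $\tp(a/Nb)=R_N(p)|_{Nb}$.

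For (ii) the same template carries an extra free parameter $e\in\monster$: start from $(a,b)\models(q_y\otimes R_N(p)_x)|_{Ne}$, that is $a\models R_N(p)|_{Ne}$ and $b\models q|_{Nea}$, and aim to show $a\models R_N(p)|_{Neb}$. Apply the defining claim of $R_N(p)$ to the tuple $(e,b,\bar n)$ to obtain $B$ as before. The equality $\tp(a/Me\bar n)=R_N(p)|_{Me\bar n}$ follows from restricting $a\models R_N(p)|_{Ne}$, and the matching equality for $\tilde a$ from restricting the defining identity $\tp(\tilde a/Meb\bar n)=R_N(p)|_{Meb\bar n}$, so $(a,e,\bar n)\equiv_M(\tilde a,e,\bar n)$ and one can take $\sigma$ fixing $(e,\bar n)$. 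The rest of the argument is identical. Since $e$ is arbitrary, $R_N(p)\otimes q=q\otimes R_N(p)$ holds globally, which is (ii); note also that (i) can be recovered from (ii) by restriction to $N$ using $R_N(p)|_N=p|_N$.

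The main obstacle is the automorphism step $\sigma(b)\equiv_{M\bar n\tilde a}b$ (or $\equiv_{Me\bar n\tilde a}b$ for (ii)), where $M$-invariance of $q$ has to combine with the fact that $\tilde a$ sits inside $N$ in order to pin $\tp(b/M\bar n\tilde a)$ down to $q|_{M\bar n\tilde a}$. A secondary technical point is that the defining claim for $R_N(p)$ is stated for a single tuple $b\in\monster$ but must be applied here to tuples $(b,\bar n)$ and $(e,b,\bar n)$; the identical NIP argument used in its proof goes through in this generality, so this is not a genuine obstruction.
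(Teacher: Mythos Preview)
Your proof is correct and follows essentially the same idea as the paper's: the key in both is that the witness $\tilde a$ realising $p|_B$ lives inside $N$, so $b\models q|_{Nea}$ automatically restricts to $q|_{Me\bar n\tilde a}$, which combined with $M$-invariance of $q$ forces $(a,b)\equiv_{Me\bar n}(\tilde a,b)$. The paper's proof of (ii) is organised slightly more economically: rather than ranging over $\bar n\in N$, it works directly over $Md$ for an arbitrary $d\in\monster$, picks $a\in N$ with $\tp(a/Mdb)=R_N(p)|_{Mdb}$, and uses $a\equiv_{Md}a_*$ together with $M$-invariance of $q$ to get $a\equiv_{Mdb}a_*$ in one step. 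This avoids the formula-by-formula quantification over $N$, but the substance is identical to what you wrote.
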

\begin{proof}
(i) Let $q$ be $M$-invariant and $b\models q|_N$. For any $B\subset N$ containing $M$ and $a\models p|_B$, we have $\tp(b,a/B)=q\otimes p|_B$. Hence, by construction of $R_N$, the stated equality holds.

(ii) Let $q$ be $M$-invariant; take some $d\in \monster$, $a_*\models R_N(p)|_{Nd}$ and $b\models q|_{Na_*d}$. Let $a\in N$ such that $\tp(a/Mdb)=R_N(p)|_{Mdb}$. Then $(a,b)\models (R_N(p)_x\otimes q_y)|_{Md}$. Also $a \equiv_{Md} a_*$, hence by $M$-invariance of $q$, $a \equiv_{Mdb} a_*$ and $(a,b)\models (q_y \otimes R_N(p)_x)|_{Md}$.
\end{proof}

We deduce from this that the Morley sequence of $R_N(p)$ has the same type over $N$ as the Morley sequence of $p$ read backwards:

\begin{lemme}\label{lem_revmorley}
Notations being as above, let $(a_0,\ldots,a_{n-1})$ be an initial segment of a Morley sequence of $p$ over $N$. Then $(a_{n-1},\ldots,a_0)$ is the beginning of a Morley sequence of $R_N(p)$ over $N$.
\end{lemme}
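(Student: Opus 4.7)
My approach is induction on $n$. The base case $n=1$ is immediate from the construction of $R_N(p)$, which ensures $R_N(p)|_N = p|_N$, so $a_0\models R_N(p)|_N$.

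For the inductive step, fix a Morley sequence $(a_0,\ldots,a_{n-1})$ of $p$ over $N$. The first observation is that the tail $(a_1,\ldots,a_{n-1})$ is itself a Morley sequence of $p$ over $N$: the condition $a_i\models p|_{Na_{<i}}$ for $i\ge 1$ implies $a_i\models p|_{Na_1\cdots a_{i-1}}$ by restriction. The inductive hypothesis, applied to this length-$(n-1)$ sequence, gives that $(a_{n-1},\ldots,a_1)$ is the beginning of a Morley sequence of $R_N(p)$ over $N$. It then suffices to prepend $a_0$ and show $a_0\models R_N(p)|_{N(a_{n-1},\ldots,a_1)}$.

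For this final step I would apply Lemma \ref{lem_reverse}(i) with $q := p^{(n-1)}$, which is $M$-invariant since each of its factors is. Grouping the Morley conditions for $(a_0,\ldots,a_{n-1})$ as ``$a_0$ first, then the tuple $(a_1,\ldots,a_{n-1})$ over $Na_0$'' identifies $\tp(a_1,\ldots,a_{n-1},a_0/N)$ with $(p^{(n-1)}_{\bar y}\otimes p_x)|_N$, where $x$ names $a_0$ and $\bar y$ names $(a_1,\ldots,a_{n-1})$. Lemma \ref{lem_reverse}(i) equates this joint type with $(R_N(p)_x\otimes p^{(n-1)}_{\bar y})|_N$, whose realization procedure reverses the order: first take $(a_1,\ldots,a_{n-1})\models p^{(n-1)}|_N$, then $a_0\models R_N(p)$ over that tuple. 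Reading off the second step yields $a_0\models R_N(p)|_{N(a_1,\ldots,a_{n-1})}$, closing the induction.

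The main obstacle I anticipate is purely notational bookkeeping: keeping straight which tuple sits on which side of each $\otimes$, and recognizing that Lemma \ref{lem_reverse}(i) is exactly the tool that allows us to pull $a_0$ past the rest of the Morley sequence in a single step, at the cost of replacing $p$ by its reverse $R_N(p)$. Once this identification is made, the induction threads through almost mechanically.
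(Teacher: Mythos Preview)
Your argument is correct, but it is organised differently from the paper's. Both proofs are by induction on $n$ with the same base case; the difference lies in the inductive step. You peel off the \emph{bottom} element $a_0$: by induction the reversed tail $(a_{n-1},\ldots,a_1)$ is a Morley sequence of $R_N(p)$, and a single application of Lemma~\ref{lem_reverse}(i) with $q=p^{(n-1)}$ shows $a_0\models R_N(p)|_{Na_1\cdots a_{n-1}}$. The paper instead adds the \emph{top} element $a_n$: by induction $(a_{n-1},\ldots,a_0)\models R_N(p)^{(n)}|_N$, so $\tp(a_n,\ldots,a_0/N)=p(x_n)\otimes R_N(p)^{(n)}|_N$; then Lemma~\ref{lem_reverse}(ii) (commutativity of $R_N(p)$ with $p$) swaps the factors, and $R_N(p)|_N=p|_N$ lets one replace the outer $p$ by $R_N(p)$. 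One small slip: where you write ``prepend $a_0$'' you mean \emph{append} it to $(a_{n-1},\ldots,a_1)$ --- the condition you then verify is the correct one for appending. It is also worth noting that the paper's route uses only two abstract properties of $R_N(p)$ --- it commutes with $p$ and agrees with $p$ on $N$ --- which is exactly what is needed for the next lemma (uniqueness of $R_N(p)$); your use of part~(i) hides this slightly, though the same generalisation goes through once one observes that $(r_x\otimes p^{(n-1)}_y)|_N=(p^{(n-1)}_y\otimes p_x)|_N$ for any $N$-invariant $r$ commuting with $p$ and extending $p|_N$.
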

\begin{proof}
In what follows, $p^{(n)}(x_{n-1},\ldots,x_0)$ denotes the type of the first $n$ elements in a Morley sequence of $p$, but with decreasing indices, hence $x_{n-2}\models p|_{x_{n-1}}$, $x_{n-3}\models p|_{x_{n-1}x_{n-2}}$ etc.

We show the result by induction on $n$. We already know that $p|_N=R_N(p)|_N$ which gives the case $n=1$.

Assume we know the result for $n$, then \[\tp(a_n/Na_0\ldots a_{n-1})=p\upharpoonright Na_0\ldots a_{n-1}.\] Hence by induction, $\tp(a_n,\ldots, a_0/N)=p(x_n)\otimes R_N(p)^{(n)}(x_{n-1}\ldots x_0)|_N$ which is equal to $R_N(p)^{(n)}(x_{n-1}\ldots x_0) \otimes p(x_n)|_N$ by Lemma \ref{lem_reverse} (ii). As the restrictions of $p$ and $R_N(p)$ to $N$ agree, this last expression is equal to \[R_N(p)^{(n)}(x_{n-1}\ldots x_0) \otimes R_N(p)(x_n)|_N=R_N(p)^{(n+1)}(x_n,\ldots,x_0)|_N.\]
\end{proof}

Note that we have only used in the proof the fact that $R_N(p)$ and $p$ have the same restriction to $N$ and the fact that those two types commute. As an $N$-invariant type is determined by the type of its Morley sequence over $N$, we deduce the following lemma.

\begin{lemme}\label{lem_uniqueRn}
Let $M\pprec N$ and $p$ be $M$-invariant. Then $R_N(p)$ is the only $N$-invariant extension of $p|_N$ which commutes with $p$.
\end{lemme}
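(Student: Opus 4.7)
The plan is to exploit the observation the author makes in the paragraph just before the lemma: the proof of Lemma \ref{lem_revmorley} uses only two facts about $R_N(p)$, namely that it agrees with $p$ on $N$ and that it commutes with $p$. Since Fact \ref{fact_morley} tells us an $N$-invariant type is determined by the restriction of its Morley sequence to $N$ (applied with $N$ in place of $M$, which is a legitimate model), it suffices to show that any $N$-invariant extension $q'$ of $p|_N$ that commutes with $p$ satisfies $q'^{(\omega)}|_N = R_N(p)^{(\omega)}|_N$.

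To do this, I would replay the inductive argument of Lemma \ref{lem_revmorley} verbatim with $q'$ in place of $R_N(p)$: for each $n$, if $(a_0,\ldots,a_{n-1})$ is an initial segment of a Morley sequence of $p$ over $N$, then $(a_{n-1},\ldots,a_0)$ is an initial segment of a Morley sequence of $q'$ over $N$. The base case $n=1$ uses $q'|_N = p|_N$, and the inductive step uses the commutation $q' \otimes p = p \otimes q'$ exactly as in the computation given for $R_N(p)$. In particular, the type of $(a_{n-1},\ldots,a_0)$ over $N$ is determined purely by $p^{(n)}|_N$ (by tuple reversal) and is independent of the particular choice of $q'$. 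Hence $q'^{(n)}|_N = R_N(p)^{(n)}|_N$ for every $n$, and therefore $q'^{(\omega)}|_N = R_N(p)^{(\omega)}|_N$. Applying Fact \ref{fact_morley} (with base $N$) yields $q' = R_N(p)$.

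The only place that requires any care is verifying that the induction in Lemma \ref{lem_revmorley} really is parametric in a type with the two stated properties; but the author has already flagged this in the remark preceding Lemma \ref{lem_uniqueRn}, so no new work is needed. Existence of such an extension is of course Lemma \ref{lem_reverse}(ii). So the whole proof reduces to two lines: invoke the parametric form of Lemma \ref{lem_revmorley} to equate the Morley sequences of $q'$ and $R_N(p)$ over $N$, then invoke Fact \ref{fact_morley}.
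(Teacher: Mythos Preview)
Your proposal is correct and is exactly the argument the paper intends: the paper does not write out a separate proof but simply observes in the paragraph preceding the lemma that the proof of Lemma \ref{lem_revmorley} used only $R_N(p)|_N=p|_N$ and commutation with $p$, and then invokes Fact \ref{fact_morley} over $N$. Your write-up makes this explicit, and the one subtlety you might flag---that the inductive step needs $p$ to commute with $q'^{(n)}$ rather than just $q'$---follows from associativity of $\otimes$ as noted earlier in the paper.
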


\subsection{Commutativity properties}

We want to argue that $F_M(p)$ somehow captures the finitely satisfiable part of $p$. We do not define the ``finitely satisfiable part of $p$", but we have in mind something like $\{ q\in S(\monster): q$ is $M$-finitely satisfiable and does not commute with $p\}$.

\begin{lemme}\label{lem_fsfm}
Let $p, q$ be $M$-invariant types, $q$ being finitely satisfiable in $M$. Then we have $F_M(q\otimes p)=q\otimes F_M(p)$.
\end{lemme}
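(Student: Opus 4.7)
The plan is to apply the defining construction of $F_M$ directly to the $M$-invariant type $r = q(y) \otimes p(x)$ and read off what it produces. The key observation is that a single realisation $(b,a)$ of $r|_N \cup \pred(y) \cup \pred(x)$ inside the saturated pair $(N', N)$ witnesses both sides of the claimed equality at once: the definition of $\otimes$ forces $a$ to realise $p|_N$ and $b$ to realise $q|_{Na}$, which is exactly what is needed to recognise $\tp(b,a/M')$ as the $M'$-restriction of $q \otimes F_M(p)$.

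Concretely, I would fix $M \pprec M'$ and an elementary extension $(N', N)$ of the pair $(M', M)$ chosen saturated enough for the construction of $F_M$ to work, and set $r = q \otimes p$, viewed as an $M$-invariant type in the variables $(y, x)$. Applying the construction of $F_M$ to $r$ produces a pair $(b, a)$ with $b, a \in N$ and $(b, a) \models r|_N \cup \pred(y) \cup \pred(x)$, and with $\tp(b, a / M') = F_M(r)|_{M'}$. Since $(b, a) \models (q \otimes p)|_N$, the definition of $\otimes$ gives $a \models p|_N$ and $b \models q|_{Na}$. The first of these, together with $a \in N$ satisfying $\pred(x)$, shows by the defining property of $F_M$ (applied now to $p$) that $\tp(a/M') = F_M(p)|_{M'}$; the second, restricted to $M'a$, gives $\tp(b/M'a) = q|_{M'a}$. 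Combining, $\tp(b, a/M')$ is the type obtained by realising $F_M(p)$ over $M'$ and then $q$ over $M'a$, which is exactly $(q \otimes F_M(p))|_{M'}$.

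To upgrade this to the claimed global equality, I note that both $F_M(q \otimes p)$ and $q \otimes F_M(p)$ are $M$-invariant global types (the latter because $q$ and $F_M(p)$ are each $M$-invariant). A short argument using $|M|^+$-saturation of $M'$ and $M$-invariance shows that two $M$-invariant global types agreeing on $M'$ must be equal: any disagreement on some $\phi(x; c)$ is transported by $M$-invariance to a disagreement on some $\phi(x; c')$ with $c' \in M'$, contradicting the assumed agreement on $M'$. Hence $F_M(q \otimes p) = q \otimes F_M(p)$.

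The main obstacle is conceptual rather than technical: one has to spot that the very construction defining $F_M(q \otimes p)$ already delivers, once the tensor structure is unwound, a witness of $(q \otimes F_M(p))|_{M'}$. After that, the only bookkeeping is ensuring that $(N', N)$ is saturated enough to realise both coordinates of the pair inside $\pred$, but this is precisely what the construction of $F_M$ already assumes.
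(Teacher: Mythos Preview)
Your argument has a genuine gap at the step ``the second [namely $b\models q|_{Na}$], restricted to $M'a$, gives $\tp(b/M'a)=q|_{M'a}$''. This restriction is illegitimate: in the pair $(N',N)$ one has $N=\pred(N')$ and $\pred(M')=M$, so $M'\not\subseteq N$ in general, and hence $M'a\not\subseteq Na$. Knowing only $b\models q|_{Na}$ tells you nothing about $\tp(b/M'a)$ for parameters in $M'\setminus N$. Note also that your argument never invokes the hypothesis that $q$ is finitely satisfiable in $M$; since the lemma fails without that hypothesis (indeed $q\otimes F_M(p)$ need not even be finitely satisfiable in $M$ when $q$ is not), this already signals that something is missing.

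The paper's proof fixes exactly this point, and it is where finite satisfiability of $q$ enters. One passes to a further saturated extension $(N',N)\pprec (N_1',N_1)$, takes $a\in N_1$ realising $p|_N\cup\pred(x)$, and then---because $q$ is finitely satisfiable in $M\subseteq N_1=\pred(N_1')$---the stronger type $q|_{N'a}\cup\pred(y)$ is consistent and realised by some $b\in N_1$. Now $M'\subseteq N'$, so $\tp(b/M'a)=q|_{M'a}$ is a legitimate restriction, and the rest of your argument goes through. (This also clarifies a smaller slip: the realisation $(b,a)$ cannot live in $N$ itself, since $r|_N$ is a complete type over $N$; one must pass to the further extension $N_1$.) In short, your overall strategy matches the paper's, but the one place where the hypothesis on $q$ is needed---getting $b$ to realise $q$ over $N'a$, not merely over $Na$---is precisely the step you skipped.
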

\begin{proof}
Let $(M',M)\prec^{+} (N',N)$ be as in the definition of $F_M$. Take yet another extension $(N',N)\prec^{+} (N_1',N_1)$. Let $a\in N_1$ realise $p|_N$. As $q$ is finitely satisfiable in $M$, $q|_{N'a}\cup \pred(x)$ is consistent and is realised by some $b\in N_1$. Then by definition $F_M(q\otimes p)$ is given by $\tp(b,a/M')$. But $\tp(b/M'a)=q|_{M'a}$, hence the result.

\end{proof}

We know that a type is definable if and only if it commutes with all types finitely satisfiable in a small model. We show now that it is enough to check commutativity with one specific finitely satisfiable type.
\begin{Prop}\label{prop_deffm}
Let $p$ be an $M$-invariant type. If $p$ commutes with $F_M(p)$, then $p$ is definable.
\end{Prop}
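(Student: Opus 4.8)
The plan is to use Lemma \ref{lem_def}: to show $p$ is definable, it suffices to show that $p$ commutes with every $M$-finitely satisfiable type $r$, i.e. $p\otimes r|_M = r\otimes p|_M$. So fix an arbitrary $M$-finitely satisfiable global type $r(y)$. The hypothesis gives us that $p$ commutes with the one specific finitely satisfiable type $F_M(p)$; the job is to leverage this together with the defining property of $F_M$ — namely that $F_M(p)|_N \cup \pred(x)$ together with $p|_N$ is consistent in the pair — to transfer commutativity to $r$.

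First I would set up the pair $(M',M)$ and its saturated extension $(N',N)$ as in the definition of $F_M$, and recall that $F_M(p)$ is the unique global $M$-finitely satisfiable type $\tilde q$ such that $p(x)|_N \cup \pred(x) \cup \tilde q(x)$ is consistent. The key idea I would pursue: in the pair, a realization of $p|_N$ that also satisfies $\pred(x)$ realizes $F_M(p)|_{M'}$; this means that, working inside the pair, an element following $p$ over a large model and lying in $\pred$ looks like an $F_M(p)$-point. Now suppose toward a contradiction that $p$ and $r$ do not commute, so there is $\phi(x;y)\in L(\monster)$ with $p\otimes r \vdash \phi(x;y)$ and $r\otimes p\vdash \neg\phi(x;y)$. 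Build, as in the proofs of Lemma \ref{lem_commseq} and Fact \ref{fact_morleyloc}, an alternating sequence $(a_n,b_n)$ where for $n$ even $(a_n,b_n)\models p\otimes r|_{\monster a_{<n}b_{<n}}$ and for $n$ odd $(a_n,b_n)\models r\otimes p|_{\monster a_{<n}b_{<n}}$; then $\phi(a_n;b_n)$ holds iff $n$ is even, and the sequence is $M$-indiscernible, contradicting NIP — provided we can actually carry out the construction. The point is that this argument normally needs $p\otimes r = r\otimes p$ to ensure indiscernibility, which is exactly what we are trying to prove, so that direct route is circular. Instead I would work in the pair $(N',N)$: realize $p$-points that additionally lie in $\pred$, so that over $N$ they are governed by $F_M(p)$, and realize $r$-points using that $r$ is finitely satisfiable in $M\subseteq \pred(N')$, hence $r|_{N'}\cup \pred(y)$ is consistent. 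Both the $p$-side and the $r$-side can then be taken inside $\pred$, and the relevant types over $N$ become $F_M(p)$ and (the $\pred$-part of) $r$. Commutativity of $p$ with $F_M(p)$, combined with the fact that $r$ finitely satisfiable in $M$ automatically commutes with the definable-looking behavior — more precisely, using Lemma \ref{lem_fsfm} ($F_M(r\otimes p)=r\otimes F_M(p)$) and $F_M(p\otimes r)$ — lets one compare $F_M(p\otimes r)$ and $F_M(r\otimes p)$ and derive $\phi$-agreement, contradicting the alternation.

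Concretely, here is the cleaner route I would try to make work. By Lemma \ref{lem_fsfm}, $F_M(r\otimes p) = r\otimes F_M(p)$. On the other hand, I claim $F_M(p\otimes r) = F_M(p)\otimes r$: to see this, take $(N',N)\prec^+(N_1',N_1)$, let $b\models r|_{N'}$ with $b\in\pred(N_1)$ (possible since $r$ is finitely satisfiable in $M$), then let $a\models p|_{N_1 b}$ with $a\in\pred$ forcing $a$ to realize $F_M(p)$ over everything available; then $(a,b)$ computes $F_M(p\otimes r)$ and $\tp(a/N'b) = F_M(p)|_{N'b}$ while $\tp(b/N') = r|_{N'}$, giving $F_M(p\otimes r)=F_M(p)\otimes r$. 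Now since $p$ commutes with $F_M(p)$ and $r$ is finitely satisfiable in $M$, by Lemma \ref{lem_def} applied to the definable-or-not question — actually more directly: $F_M(p)$ commutes with $r$ is not automatic, but $p\otimes r$ and $r\otimes p$ have the same image under $F_M$ iff $F_M(p)\otimes r = r\otimes F_M(p)$, i.e. iff $F_M(p)$ and $r$ commute. So the argument reduces to: $p$ commutes with $F_M(p)$, and I must deduce $F_M(p)$ commutes with $r$ for all finitely satisfiable $r$, i.e. (by Lemma \ref{lem_def}) that $F_M(p)$ is definable; but $F_M(p)$ is finitely satisfiable, so $F_M(p)$ definable means $F_M(p)$ is generically stable, hence $p|_M = F_M(p)|_M$ determines a generically stable type, and one then shows $p$ itself must equal a definable type because $p$ and the generically stable $F_M(p)$ agree on $M$ and commute, forcing (via Fact \ref{fact_morley} and the fact that a generically stable type commutes with everything) that $p=F_M(p)$, which is definable.

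The main obstacle, and where I expect the real work to be, is establishing that $p$ commuting with $F_M(p)$ forces $F_M(p)$ to be generically stable (equivalently definable). The clean statement I want is: if an $M$-finitely satisfiable type $q$ commutes with some $M$-invariant $p$ having $p|_M = q|_M$, and moreover $q = F_M(p)$, then $q$ is generically stable — and then $p = q$. Proving $q$ is generically stable amounts to showing $q$ commutes with itself; here I would use that $q=F_M(p)$ together with the commutativity properties of $F_M$ (Lemma \ref{lem_fsfm} and the image computation above) to get $q\otimes q = q \otimes F_M(p) = F_M(q\otimes p)$, and separately analyze $F_M(p\otimes q)=F_M(p)\otimes q = q\otimes q$, comparing the two via the hypothesis $p\otimes F_M(p)=F_M(p)\otimes p$. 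If the two computations of $F_M$ of the two products agree and $F_M$ is injective enough on the relevant types (it is injective on $M$-finitely satisfiable types by property (iii)), we conclude $q\otimes q = q\otimes q$ in a way that after unwinding gives self-commutation of $q$. Once $F_M(p)$ is generically stable, $p = F_M(p)$ follows because a generically stable type commutes with all invariant types and is the unique invariant extension of its restriction to $M$, so $p|_M = F_M(p)|_M$ plus NIP (Fact \ref{fact_morley}, via the Morley sequence) forces $p = F_M(p)$, which is definable. I would double-check the injectivity and the exact chains of $\otimes$-identities carefully, since associativity is available but commutativity is precisely the scarce resource here.
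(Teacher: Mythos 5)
The global strategy of your proposal is wrong, and the paper's DLO example (stated right after the construction of $F_M$) is a direct counterexample to your two intermediate claims. In DLO, take $p=p_{def}$, the definable $M$-invariant type at a cut $a\in M$. Then $F_M(p_{def})=p_{fs}$, the finitely satisfiable type at the same cut. Since $p_{def}$ is definable, it commutes with every $M$-finitely satisfiable type (Lemma \ref{lem_def}), in particular with $F_M(p_{def})=p_{fs}$, so the hypothesis of the Proposition holds. But $p_{fs}$ is \emph{not} generically stable (it is finitely satisfiable and not definable), and $p_{def}\neq p_{fs}$. So both ``$F_M(p)$ is generically stable'' and ``$p=F_M(p)$'' — the two pillars of your reduction — are false.

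The reason your attempt to prove self-commutation of $q=F_M(p)$ collapses is visible in the computation itself: you obtain $F_M(q\otimes p)=q\otimes q$ by Lemma \ref{lem_fsfm}, and (granting your unjustified identity $F_M(p\otimes r)=F_M(p)\otimes r$, which the paper's Remark explicitly cautions against) you also get $F_M(p\otimes q)=q\otimes q$. The hypothesis $p\otimes q=q\otimes p$ then just tells you $q\otimes q=q\otimes q$, which says nothing about whether $q(x_0)\otimes q(x_1)$ equals $q(x_1)\otimes q(x_0)$. There is no way to extract self-commutation from this circle. Your final step, deducing $p=F_M(p)$ from generic stability of $F_M(p)$, is in fact a correct argument — but it rests entirely on a premise that is false under the proposition's hypothesis. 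The paper's proof does something genuinely different: it introduces the reverse type $R_N(p)$, which commutes with $p$ by construction, and runs a maximality/contradiction argument with a carefully chosen sequence $(c_i^0,c_i^1)$ inside $\pred$, exploiting Lemma \ref{lem_fsfm} and the commutation hypothesis to show that the sequence could always be extended if $p$ were not definable. That mechanism has no counterpart in your sketch, and without it the proposition does not follow from the commutativity properties of $F_M$ that you invoke.
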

\begin{proof}
Let $M\prec^+ N$ and consider $s_1$ and $s_2$ two $M$-invariant types. Take $a_1,a_2\in \monster$ then by Lemma \ref{lem_reverse} (i),

\smallskip
$\oplus_0$ ($a_1\models s_1|_N$ and $a_2\models s_2|_{Na_1}$) if and only if ($a_2\models s_2|_N$ and $a_1\models R_N(s_1)|_{Na_2}$).

\smallskip

In particular:

\smallskip
$\oplus_1$ If $a_2\models s_2|_N$ and $a_1\models R_N(s_1)|_{Na_2}$, then $\tp(a_1,a_2/N)$ is $M$-invariant.

\smallskip
Assume that $p$ commutes with $F_M(p)$ but is not definable. Then by Lemma \ref{lem_def} there is some type $q$ finitely satisfiable in $M$ such that $p_x\otimes q_y|_M \neq q_y\otimes p_x|_M$. Hence by $\oplus_0$, $p_x\otimes q_y|_M \neq R_N(p)_x\otimes q_y|_M$.

Let $b\models q|_N$ and build a maximal sequence $(c^0_i,c^1_i : i<\alpha)$ such that:

$\cdot$ $\tp(b+(c^1_i,c^0_i:i<\alpha) /N)$ is finitely satisfiable in $M$;

$\cdot$ $(c^0_i,c^1_i)\models R_N(F_M(p))^{(2)}\upharpoonright c^0_{<i}c^1_{<i}N$;

$\cdot$ $\tp(bc^0_i/M)\neq \tp(bc^1_i/M)$.

By NIP such a maximal sequence exists (for any $\eta: \alpha\to \{0,1\}$, the sequence $(c^{\eta(i)}_i:i<\alpha)$ is a Morley sequence of $R_N(F_M(p))$ and as such is indiscernible. If $\alpha\geq |M|^+$, then the last bullet contradicts NIP).

\smallskip
Let now $a^0\models R_N(p)$ over everything and $a^1\models p$ over everything. Then $\tp(ba^0/M)\neq \tp(ba^1/M)$.

Let $s$ be the type of $b+(c^1_i,c^0_i)_{i<\alpha}+a^0a^1$ over $N$. This type is $M$-invariant by $\oplus_1$. If we apply $F_M$ to it, then the restriction to the variables corresponding to $b+(c^0_i,c^1_i)_{i<\alpha}$ does not change since this type is $M$-finitely satisfiable. Thus we can find $(g^0,g^1)$ such that $b+(c^0_i,c^1_i)_{i<\alpha}+g^0g^1$ realises $F_M(s)$ over $N$.

By $\oplus_0$, $b\models q|_{Na^0}$. Then by Lemma \ref{lem_fsfm} and another application of $\oplus_0$, we have

\smallskip
$\oplus_2$ $g^0 \models R_N(F_M(p))\upharpoonright Mb$.

\smallskip
Next, notice that the tuple $c^0_{<\alpha}c^1_{<\alpha}$ realises a Morley sequence of $F_M(p)$ over $N$ (when ordered as $\ldots c^1_{i+1}c^0_{i+1}c^1_i c^0_i   \ldots$). As $p$ and $F_M(p)$ commute, we deduce that $p$ and $R_N(p)$ have the same restriction to $Nc^0_{<\alpha}c^1_{<\alpha}$. Therefore $a^1\models R_N(p)\upharpoonright  Nc^0_{<\alpha}c^1_{<\alpha}$.

Hence by the same argument as above, we conclude

\smallskip
$\oplus_3$ $g^1\models R_N(F_M(p))\upharpoonright  Nc^0_{<\alpha}c^1_{<\alpha}$.

\smallskip
Also, we still have $\tp(bg^0/M)\neq \tp(bg^1/M)$ since $F_M$ preserves types over $M$. So by $\oplus_2$, $g^1$ does not realise $R_N(F_M(p))$ over $Mb$. Now take $g$ satisfying $R_N(F_M(p))$ over everything. Set $c^0_\alpha=g^1$ and $c^1_\alpha=g$. This contradicts maximality of the sequence $(c^0_ic^1_i : i<\alpha)$.
\end{proof}

\begin{lemme}
Let $p$ be $M$-invariant, not finitely satisfiable in $M$. Let $I$ be a Morley sequence of $p$ over $M$, then $p|_{MI}$ is not finitely satisfiable in $M$.
\end{lemme}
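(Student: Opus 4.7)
The plan is to argue by contradiction. Suppose that $p|_{MI}$ is finitely satisfiable in $M$ and let $\tilde p$ be a global extension of $p|_{MI}$ that is still finitely satisfiable in $M$. Both $p$ and $\tilde p$ are $M$-invariant. The goal is to show $\tilde p^{(\omega)}|_M = p^{(\omega)}|_M$, so that Fact~\ref{fact_morley} forces $\tilde p = p$, contradicting that $p$ is not $M$-finitely satisfiable.

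The equality $\tilde p^{(\omega)}|_M = p^{(\omega)}|_M$ is proved by induction on $n$, showing $\tilde p^{(n)}|_M = p^{(n)}|_M$. The case $n=1$ is immediate since $\tilde p \supseteq p|_{MI} \supseteq p|_M$ and $\tilde p|_M$ is a complete type over $M$. For the inductive step, write $I = (a_i : i<\omega)$ and take $(b_0, \ldots, b_n) \models \tilde p^{(n+1)}|_M$. By the induction hypothesis, $(b_0, \ldots, b_{n-1}) \equiv_M (a_0, \ldots, a_{n-1})$, so there is an automorphism $\tau$ of $\monster$ fixing $M$ pointwise with $\tau(a_i) = b_i$ for $i < n$. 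Since $p$ and $\tilde p$ are both $M$-invariant, each is fixed setwise by $\tau$. The inclusion $\tilde p \supseteq p|_{MI}$ gives $\tilde p|_{Ma_{<n}} = p|_{Ma_{<n}}$, and applying $\tau$ to this equality yields $\tilde p|_{Mb_{<n}} = p|_{Mb_{<n}}$. Hence $b_n \models \tilde p|_{Mb_{<n}} = p|_{Mb_{<n}}$, so $(b_0, \ldots, b_n) \models p^{(n+1)}|_M$; since both $\tilde p^{(n+1)}|_M$ and $p^{(n+1)}|_M$ are complete types over $M$ in the same variables, they coincide.

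There is no real obstacle here: the proof is a direct exploitation of the fact that an $M$-invariant type in an NIP theory is determined by its Morley sequence over $M$. The only mildly delicate point is the automorphism step used to transport the identification of $\tilde p$ with $p$ from $Ma_{<n}$ to $Mb_{<n}$, which rests squarely on the $M$-invariance of both types.
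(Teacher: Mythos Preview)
Your proof is correct and follows essentially the same approach as the paper: take a global $M$-finitely satisfiable extension of $p|_{MI}$, argue inductively that its Morley sequence over $M$ coincides with that of $p$, and invoke Fact~\ref{fact_morley}. The paper's proof is a two-line sketch (``One then shows inductively that $p$ and $q$ have the same Morley sequence, hence $p=q$''), and your automorphism argument is exactly the way to carry out that induction.
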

\begin{proof}
Let $\mathcal D$ be an ultrafilter on $M$ whose limit type $q$ has the same restriction as $p$ over $MI$. One then shows inductively that $p$ and $q$ have the same Morley sequence, hence $p=q$.
\end{proof}

\begin{lemme}
Let $M\pprec N$. If $p$ is $M$-invariant, but not finitely satisfiable in $M$, then $F_N(p)$ does not commute with $R_N(p)$.
\end{lemme}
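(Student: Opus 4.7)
My plan is to prove the contrapositive: assuming $F_N(p)$ commutes with $R_N(p)$, I will show that $p$ must be finitely satisfiable in $M$. The starting observation is that $F_N(p)$ and $R_N(p)$ are both $N$-invariant (the former as an $N$-finitely satisfiable type, the latter by construction), and both restrict to $p|_N$ on $N$. Moreover, $p$ itself is $N$-invariant since $M\subseteq N$, so all three types have the same restriction to $N$ and are $N$-invariant.

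The crux of the argument is a minor extension of Lemma~\ref{lem_revmorley}. As the author points out immediately after its proof, the induction uses only that the two types agree on $N$ and that they commute. Re-running exactly that induction with $F_N(p)$ playing the role of $p$ and $R_N(p)$ playing the role of the reverse type, I obtain: if $(a_0,a_1,\ldots)\models F_N(p)^{(\omega)}|_N$, then for every $n$, $(a_{n-1},\ldots,a_0)\models R_N(p)^{(n)}|_N$. Combined with Lemma~\ref{lem_revmorley} applied directly to $p$ (so that the reverse of a Morley sequence of $p$ over $N$ also realises $R_N(p)^{(\omega)}|_N$), this forces $F_N(p)^{(\omega)}|_N=p^{(\omega)}|_N$.

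Having matched the Morley sequences over $N$, I apply Fact~\ref{fact_morley} with $N$ as the base model: since both $F_N(p)$ and $p$ are $N$-invariant with the same Morley type over $N$, they are equal. Since $F_N(p)$ is $N$-finitely satisfiable, so is $p$; combined with $M$-invariance of $p$ and $M\pprec N$, the standard fact recalled at the start of Section~1.1 yields that $p$ is finitely satisfiable in $M$, contradicting the hypothesis. The only step requiring any care is the adaptation of Lemma~\ref{lem_revmorley}, but since the author has already observed that its proof uses nothing beyond commutativity and agreement on $N$, this generalisation is essentially automatic and no real obstacle appears.
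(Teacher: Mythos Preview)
Your argument is correct and takes a genuinely different route from the paper's. The paper argues directly: it invokes the preceding lemma to find a formula $\phi(x;a_1,\ldots,a_n)\in p$ with no realisation in $N$ (where $(a_i)$ is a Morley sequence of $p$ over $N$), observes that $(a_n,\ldots,a_1)\models R_N(p)^{(n)}|_N$, and then checks by hand that $F_N(p)\otimes R_N(p)^{(n)}$ and $R_N(p)^{(n)}\otimes F_N(p)$ disagree on $\phi$---the first because $\phi$ has no point in $N$ and $F_N(p)$ is $N$-finitely satisfiable, the second because $R_N(p)^{(n)}\otimes F_N(p)$ and $R_N(p)^{(n)}\otimes p$ agree on $N$.

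Your contrapositive instead exploits the remark following Lemma~\ref{lem_revmorley}: since that proof uses only agreement on $N$ and commutativity, it applies verbatim to the pair $(F_N(p),R_N(p))$ once commutativity is assumed, and combining this with the original lemma for $(p,R_N(p))$ forces $F_N(p)^{(\omega)}|_N=p^{(\omega)}|_N$, hence $F_N(p)=p$ by Fact~\ref{fact_morley}. This is slicker in that it bypasses the auxiliary lemma on $p|_{NI}$ entirely and never writes down an explicit witnessing formula; the paper's version, on the other hand, is more concrete and exhibits the actual instance of non-commutation. Both are short, and your use of the ``Morley sequence determines the type'' machinery is arguably the more conceptual of the two.
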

\begin{proof}
Let $I=(a_i:i<\omega)$ be a Morley sequence of $p$ over $N$. By the previous lemma, $p|_{NI}$ is not finitely satisfiable in $N$. Let $\phi(x;a_1,\ldots,a_n)$ be a formula witnessing it. By Lemma \ref{lem_revmorley}, we have $(a_n,\ldots,a_1)\models R_N(p)^{(n)}|_N$. As $\phi(N;a_1,\ldots,a_n)=\emptyset$, necessarily $F_N(p)\otimes R_N(p)^{(n)} \models \neg \phi(x;y_n,\ldots,y_1)$. On the other hand, $R_N(p)^{(n)}\otimes F_N(p)$ and $R_N(p)^{(n)} \otimes p$ have the same restriction to $N$, hence $R_N(p)^{(n)}\otimes F_N(p) \models \phi(x;y_n,\ldots,y_1)$. So $F_N(p)$ and $R_N(p)^{(n)}$ do not commute, hence already $F_N(p)$ and $R_N(p)$ do not commute.
\end{proof}

\begin{lemme}
The type $p$ is definable if and only if $F_{N}(p)=R_N(p)$.
\end{lemme}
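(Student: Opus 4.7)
The plan is to prove the two implications separately, using Lemma \ref{lem_uniqueRn} for the forward direction and Proposition \ref{prop_deffm} for the reverse direction.

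\emph{Forward direction.} Assume $p$ is definable. Then $p$ is $M$-definable, hence also $N$-definable. By construction of $F_N$, the type $F_N(p)$ is $N$-finitely satisfiable (in particular $N$-invariant) and satisfies $F_N(p)|_N = p|_N$. By Lemma \ref{lem_uniqueRn}, $R_N(p)$ is the unique $N$-invariant extension of $p|_N$ that commutes with $p$, so it suffices to show that $F_N(p)$ commutes with $p$. Since $p$ is $N$-definable, the Morley power $p^{(\omega)}$ is also $N$-definable (iterate the definition scheme), while $F_N(p)^{(\omega)}$ is $N$-finitely satisfiable. Lemma \ref{lem_def} applied over $N$ to $p^{(\omega)}$ and $F_N(p)^{(\omega)}$ gives the equality of their $N$-restrictions after swapping the $\otimes$, and Lemma \ref{lem_commseq} upgrades this to full commutation of $p$ and $F_N(p)$. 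Thus $F_N(p) = R_N(p)$.

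\emph{Reverse direction.} Assume $F_N(p) = R_N(p)$. By Lemma \ref{lem_reverse}(ii), $R_N(p)$ commutes with every $M$-invariant type, and in particular with $p$. Hence $F_N(p)$ commutes with $p$. Applying Proposition \ref{prop_deffm} with $N$ in place of $M$ --- which is legitimate because $p$ is $N$-invariant and the construction of $F_N$ is formally identical to that of $F_M$ --- we conclude that $p$ is definable.

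The only slightly delicate point is the passage from the equality of restrictions over the base provided by Lemma \ref{lem_def} to genuine commutation as global types, but this is precisely what Lemma \ref{lem_commseq} handles (once one observes that definability is preserved by Morley powers). So no real obstacle arises; both implications reduce to short applications of already-established machinery.
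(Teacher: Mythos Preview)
Your proof is correct and follows essentially the same strategy as the paper: the forward direction uses Lemma \ref{lem_uniqueRn} once one knows $F_N(p)$ commutes with $p$, and the reverse direction combines Lemma \ref{lem_reverse}(ii) with Proposition \ref{prop_deffm} applied over $N$. The only difference is in how you establish that $F_N(p)$ commutes with $p$ in the forward direction: the paper simply asserts that a definable type commutes with every $N$-finitely satisfiable type (which follows by applying Lemma \ref{lem_def} over any model $N'\supseteq N$, since $p$ remains definable and $F_N(p)$ remains finitely satisfiable there), whereas you take a small detour through Morley powers and Lemma \ref{lem_commseq}. Your route is valid but slightly longer than necessary.
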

\begin{proof}
Assume that $p$ is definable. Then we know that $p$ commutes with every $N$-finitely satisfiable type. Thus $F_N(p)$ and $R_N(p)$ are two $N$-invariant types extending $p|_N$ and commuting with $p$. By Lemma \ref{lem_uniqueRn}, there can be only one such type. Hence $F_N(p)=R_N(p)$.

Conversely, if $p$ is not definable, then $p$ does not commute with $F_{N}(p)$ by Proposition \ref{prop_deffm}. But $R_N(q)$ does commute with $p$, hence $F_N(p)\neq R_N(p)$.
\end{proof}

\begin{Prop}
Let $p$ be $M$-invariant and let $q$ be a type finitely satisfiable in $M$. If $q$ commutes with $F_M(p)$, then it commutes with $p$.
\end{Prop}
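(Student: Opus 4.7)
The plan is to argue by contradiction, closely following the strategy of the proof of Proposition~\ref{prop_deffm}. Write $\oplus_0$ for the identity $q \otimes s|_N = R_N(s) \otimes q|_N$ of Lemma~\ref{lem_reverse}(i), valid for any $M$-invariant $s$ and any $M \pprec N$. Applied with $s=F_M(p)$, $\oplus_0$ translates the hypothesis ``$q$ commutes with $F_M(p)$'' into $F_M(p)\otimes q|_N = R_N(F_M(p))\otimes q|_N$; applied with $s=p$, it translates the assumed failure ``$q$ does not commute with $p$'' into $p\otimes q|_N \neq R_N(p)\otimes q|_N$. So there exist $b\models q|_N$ and a formula $\psi(x;b,e)\in L(Nb)$ with $p\vdash \psi(x;b,e)$ but $R_N(p)\vdash\neg\psi(x;b,e)$, and correspondingly $\tp(ba^0/M)\neq\tp(ba^1/M)$ whenever $a^0\models R_N(p)$ and $a^1\models p$ over an appropriate set containing $b$ and $e$.

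Fix such $b$ and $e$, and build, as in the proof of Proposition~\ref{prop_deffm}, a maximal sequence $(c^0_i,c^1_i : i<\alpha)$ satisfying (i) $\tp(b+(c^1_i,c^0_i)_{i<\alpha}/N)$ is finitely satisfiable in $M$; (ii) $(c^0_i,c^1_i)\models R_N(F_M(p))^{(2)}\upharpoonright c^0_{<i}c^1_{<i}N$; and (iii) $\tp(bc^0_i/M)\neq\tp(bc^1_i/M)$. NIP forces $\alpha<|M|^+$: any $\eta:\alpha\to\{0,1\}$ yields an $N$-Morley, hence indiscernible, sequence $(c^{\eta(i)}_i)$ of $R_N(F_M(p))$, and (iii) would otherwise produce an IP pattern over $Mb$.

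To contradict maximality, pick $a^0\models R_N(p)$ and $a^1\models p$ over everything built, and let $s=\tp(b+(c^1_i,c^0_i)_{i<\alpha}+a^0a^1/N)$; by the invariance identity $\oplus_1$ from Proposition~\ref{prop_deffm}'s proof, $s$ is $M$-invariant, and its $b+(c^1_i,c^0_i)$-part is unchanged under $F_M$. Realise $F_M(s)$ as $b+(c^1_i,c^0_i)+g^1g^0$ for some $g^0,g^1$. Combining Lemma~\ref{lem_fsfm}, $\oplus_0$, and the hypothesis that $q$ commutes with $F_M(p)$, one obtains $g^0\models R_N(F_M(p))\upharpoonright Mb$ and, by the analogous argument with the $c$-sequence playing the role of $b$, $g^1\models R_N(F_M(p))\upharpoonright Nc^0_{<\alpha}c^1_{<\alpha}$. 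Since $F_M$ preserves $M$-types, $\tp(bg^0/M)\neq\tp(bg^1/M)$, so $g^1\not\models R_N(F_M(p))\upharpoonright Mb$; taking $g\models R_N(F_M(p))$ over everything and setting $(c^0_\alpha,c^1_\alpha)=(g^1,g)$ then extends the sequence and contradicts maximality.

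The main obstacle is the extension step's analogue of Proposition~\ref{prop_deffm}'s claim ``$a^1\models R_N(p)\upharpoonright Nc^0_{<\alpha}c^1_{<\alpha}$'', which there used the self-commutativity of $p$ with $F_M(p)$. Here that self-commutativity is unavailable; the substitute is the commutativity of $q$ with $F_M(p)$, which must be applied through the fixed $b\models q|_N$ via $\oplus_0$. Making this substitution precise, while keeping the $M$-finite satisfiability clause (i) when extending with $(g^1,g)$, will be the technically delicate part of the argument.
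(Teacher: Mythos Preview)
The gap you flag is genuine and, as far as I can see, not repairable along these lines. In Proposition~\ref{prop_deffm} the step $a^1\models R_N(p)\upharpoonright Nc^0_{<\alpha}c^1_{<\alpha}$ works precisely because the hypothesis there is that $p$ commutes with $F_M(p)$: the $c$-sequence is (reversed) a Morley sequence of $F_M(p)$, so commutativity forces $p$ and $R_N(p)$ to agree over it. Here the only commutativity available is between $q$ and $F_M(p)$; this says nothing about how $p$ relates to Morley sequences of $F_M(p)$, and a single realisation $b\models q|_N$ cannot mediate such a relation. Without that step you cannot obtain $g^1\models R_N(F_M(p))\upharpoonright Nc^0_{<\alpha}c^1_{<\alpha}$, and the extension to $(c^0_\alpha,c^1_\alpha)$ collapses. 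Your proposal is really a sketch that reproduces the architecture of Proposition~\ref{prop_deffm} but leaves the one step that genuinely used its hypothesis unproved.

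The paper's argument is entirely different and much shorter; it avoids $R_N$ and the maximal-sequence construction altogether, and the key tool is Proposition~\ref{prop_finitecofinite}. Assume $q$ does not commute with $p$; replacing $q$ by $q^{(\omega)}$ and invoking Lemma~\ref{lem_commseq}, one may assume $q\otimes p|_M\neq p\otimes q|_M$. Set $r=p\otimes q^{(\omega)}\otimes p$ and realise $F_M(r)|_N$ as $(a_1,\bar b,a_0)$. Since $F_M$ preserves types over $M$, one reads off $\tp(a_1,b_k/M)=p\otimes q|_M\neq q\otimes p|_M=\tp(a_0,b_k/M)$ for every $k$. On the other hand, projecting and applying Lemma~\ref{lem_fsfm} gives $\tp(\bar b,a_0/N)=q^{(\omega)}\otimes F_M(p)|_N$, so $\bar b\models q^{(\omega)}|_N$ and $\tp(b_k,a_0/M)=q\otimes F_M(p)|_M$; hence $\tp(b_k,a_1/M)$ is constant in $k$ and distinct from $q\otimes F_M(p)|_M$. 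But $a_1\models F_M(p)|_N$, so if $q$ commutes with $F_M(p)$, Proposition~\ref{prop_finitecofinite} forces $\lim(\bar b/Na_1)=q|_{Na_1}$; since the sequence $\tp(b_k/Ma_1)$ is constant, that constant must be $q|_{Ma_1}$, contradicting the previous sentence. The moral is that the right lever here is Proposition~\ref{prop_finitecofinite}, not the $R_N$-machinery.
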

\begin{proof}
Let $M \prec^+ N$. Assume that $q$ does not commute with $p$. Replacing $q$ by $q^{(\omega)}$ and using Lemma \ref{lem_commseq}, we may assume that $q\otimes p|_M\neq p\otimes q|_M$. Let $r= p\otimes q^{(\omega)} \otimes p$ and let $(a_1,\bar b,a_0)\models F_M(r)|_N$. Then for every $k<\omega$, $\tp(a_1,b_k/M)\neq \tp(a_0, b_k/M)$. On the other hand by Lemma \ref{lem_fsfm}, $\tp(\bar b,a_0/N)=q^{(\omega)}\otimes F_M(p)|_N$. Thus for every $k$, $(b_k,a_1)$ does not realise $q\otimes F_M(p)$ over $M$, and $\tp(b_k,a_1/M)$ is constant as $k$ varies. If $q$ commutes with $F_M(p)$, this contradicts Proposition \ref{prop_finitecofinite}.
\end{proof}

Note that you cannot expect the other implication (if $q$ commutes with $p$, then it commutes with $F_M(p)$). For example if $p$ is definable not finitely satisfiable in $M$, then $F_M(p)$ commutes with $p$, but does not commute with itself.

\subsection{Application to dp-minimal types}

Let $p$ be $M$-invariant and $M\prec^+ N$. Summarising some of the results above, the situation is as follows:

$\cdot$ $p$ and $R_N(p)$ commute;

$\cdot$ $F_N(p)$ commutes with $R_N(p)$ if and only if $p$ is finitely satisfiable;

$\cdot$ $F_N(p)$ commutes with $p$ if and only if $p$ is definable.

\smallskip
We have now all we need to give another, more conceptual, proof of the dichotomy for dp-minimal types proved in \cite{invdp}.

Recall that a type $p$ is \emph{dp-minimal} if for any $A$ and any two sequences $I$ and $J$  mutually indiscernible over $A$, for any $a\models p$, either $I$ or $J$ is indiscernible over $Aa$. In particular if $p$ is $M$-invariant and dp-minimal, $q$ and $r$ are two $M$-invariant types which commute, then $p$ commutes with either $q$ or $r$. To see this let $M\prec^+N$ and build $(\bar b,\bar c,a,\bar b',\bar c')\models q^{(\omega)}\otimes r^{(\omega)} \otimes p \otimes q^{(\omega)} \otimes r^{(\omega)}$ over $N$. The sequences $\bar b+\bar b'$ and $\bar c+\bar c'$ are mutually indiscernible but none is indiscernible over $Na$.

\begin{thm}[\cite{invdp}, Theorem 2.6]\label{th_dpmindich}
Let $p$ be $M$-invariant and dp-minimal, then $p$ is either finitely satisfiable in $M$ or definable.
\end{thm}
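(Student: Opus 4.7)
The plan is to argue by contradiction: assume $p$ is dp-minimal and $M$-invariant but neither finitely satisfiable in $M$ nor definable, and derive a contradiction using the three-bullet summary immediately preceding the theorem.

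Fix $M\prec^+ N$. First, I would build a pair of mutually indiscernible Morley sequences over $N$: take $\bar a \models p^{(\omega)}|_N$ and then $\bar b \models R_N(p)^{(\omega)}|_{N\bar a}$. Since $p$ commutes with $R_N(p)$ (first bullet, Lemma \ref{lem_reverse}(ii)), the sequences $\bar a$ and $\bar b$ are mutually indiscernible over $N$. Now let $c \models F_N(p)|_{N\bar a\bar b}$. Because $F_N(p)|_M = p|_M$, the element $c$ realizes the dp-minimal partial type $p|_M$, so its dp-rank is at most $1$. Applying dp-minimality to $c$ with the pair $(\bar a,\bar b)$, either $\bar a$ or $\bar b$ is indiscernible over $Nc$.

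The claim is that the first alternative forces $p$ to commute with $F_N(p)$ (contradicting non-definability by the third bullet), while the second symmetrically forces $R_N(p)$ to commute with $F_N(p)$ (contradicting non-fs by the second bullet); either contradicts our assumption.

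To justify the first alternative, the key intermediate step is to upgrade the $Nc$-indiscernibility of $\bar a$ to the statement that $\bar a$ is still a Morley sequence of $p$ over $Nc$, i.e.\ $a_i \models p|_{Nca_{<i}}$. The heuristic is that $c$ is drawn from the $N$-finitely satisfiable type $F_N(p)$, so $c$ lies ``close to $N$'' and cannot perturb the generic direction of $p$; hence $a_0$ still realizes $p|_{Nc}$, and the rest of the sequence follows. Granting this, the joint type $\tp(\bar a\,c/N)$ is realized by both orderings of the product: building $\bar a$ from $p^{(\omega)}|_N$ first and then $c$ from $F_N(p)|_{N\bar a}$, or building $c$ from $F_N(p)|_N$ first and then $\bar a$ as a Morley of $p$ over $Nc$. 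This gives the equality $p^{(\omega)}(\bar x)\otimes F_N(p)(y)|_N = F_N(p)(y)\otimes p^{(\omega)}(\bar x)|_N$. An easy induction using associativity of $\otimes$ upgrades this to $p^{(\omega)}\otimes F_N(p)^{(\omega)}|_N = F_N(p)^{(\omega)}\otimes p^{(\omega)}|_N$, and restricting to $M$ and invoking Lemma \ref{lem_commseq} yields that $p$ and $F_N(p)$ commute. The second case is handled identically with $\bar b$ and $R_N(p)$ in place of $\bar a$ and $p$.

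The main obstacle will be the upgrade from ``$Nc$-indiscernible'' to ``Morley of $p$ over $Nc$''. This is the technical heart of the argument and needs the specific structure of $c$ as a realization of the $N$-finitely satisfiable type $F_N(p)$, which should force the limit type of $\bar a$ over $Nc$ to coincide with $p|_{Nc}$ rather than with some other $Nc$-invariant extension of $p|_N$.
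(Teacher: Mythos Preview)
Your overall strategy---use the three-bullet summary together with dp-minimality to force $F_N(p)$ to commute with either $p$ or $R_N(p)$---is exactly the paper's. The difference, and the gap, lies in how dp-minimality is invoked.

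You place $c\models F_N(p)$ \emph{after} the Morley sequences $\bar a,\bar b$ and then need to upgrade ``$\bar a$ is indiscernible over $Nc$'' to ``$\bar a$ is a Morley sequence of $p$ over $Nc$''. This upgrade is not justified, and your heuristic (``$c$ lies close to $N$'') does not suffice: knowing that $F_N(p)$ is finitely satisfiable in $N$ tells you nothing about $\tp(\bar a/Nc)$ beyond its restriction to $N$. Indeed, what you are trying to establish at that point is precisely that $F_N(p)\otimes p^{(\omega)}|_N = p^{(\omega)}\otimes F_N(p)|_N$, which via Lemma~\ref{lem_commseq} is already equivalent to $p$ commuting with $F_N(p)$; so the argument is circular. (Proposition~\ref{prop_finitecofinite}, which does give $\lim(\bar a/Nc)=p|_{Nc}$, requires as hypothesis that $p$ and $F_N(p)$ commute, so it cannot be used here either.)

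The paper sidesteps this entirely by placing the dp-minimal realization \emph{in the middle}. Just before the theorem it records the following general fact: if $s$ is dp-minimal and $q,r$ are invariant types that commute with each other, then $s$ commutes with at least one of $q,r$. The proof builds $(\bar b,\bar c,a,\bar b',\bar c')\models q^{(\omega)}\otimes r^{(\omega)}\otimes s\otimes q^{(\omega)}\otimes r^{(\omega)}$ over $N$. Since $q$ and $r$ commute, $\bar b+\bar b'$ and $\bar c+\bar c'$ are mutually indiscernible; and if $s$ fails to commute with both $q$ and $r$, then neither concatenated sequence is indiscernible over $Na$, because the type of $a$ together with an element realized before it differs from its type together with an element realized after it. This contradicts dp-minimality of $s$ directly---no upgrade step is needed. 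The theorem then follows in one line: apply this with $s=F_N(p)$ (which is dp-minimal since $F_N(p)|_N=p|_N$) and the commuting pair $(q,r)=(p,R_N(p))$.
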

\begin{proof}
If $p$ is dp-minimal, then already $p|_N$ is dp-minimal. Hence $F_N(p)$ is dp-minimal. Since $R_N(p)$ and $p$ commute, by dp-minimality, $F_N(p)$ must commute with one or the other. From the observations above, we deduce that $p$ is either finitely satisfiable or definable.
\end{proof}

\section{Amalgamation of invariant types}\label{sec_amalg}

The results presented in this section are independent of the rest of the paper. They deal with amalgamating invariant types in NIP theories. They were used in previous, more complicated, proofs of some of the results here and we hope that they might turn out to be useful elsewhere.

\smallskip \noindent
\textbf{Assumption}:  Throughout this section, we assume that $T$ is NIP.

\begin{lemme}\label{lem_amalg}
Let $M\pprec N$ and let $p(x),q(y)$ be $M$-invariant types. Let $a\models p|_N$ and $b\models q|_N$, then there is some $N$-invariant type $r(x,y)$ extending $p(x)\cup q(y)\cup \tp(a,b/N)$.
\end{lemme}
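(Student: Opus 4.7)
My plan is to realise $r$ as the limit type of a long $N$-indiscernible sequence of pairs. Concretely, I aim to construct a sequence $\bar I = ((a_i, b_i) : i < \omega)$ in $\monster$ satisfying: (i)~$(a_i, b_i) \equiv_N (a, b)$ for all $i$; (ii)~$a_i \models p|_{N a_{<i}}$; and (iii)~$b_i \models q|_{N b_{<i}}$. These conditions are encoded in the EM-type of $\bar I$ over $N$ together with the $N$-invariance of $p$ and $q$, so a Ramsey extraction produces an $N$-indiscernible $\bar I' = ((a_i', b_i') : i < \omega)$ still satisfying (i)--(iii), hence with $x$-projection having EM-type $p^{(\omega)}|_N$ and $y$-projection having EM-type $q^{(\omega)}|_N$. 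Stretching $\bar I'$ to a sufficiently long $N$-indiscernible $\bar I''$ and setting $r = \lim(\bar I''/\monster)$ yields a global $N$-invariant type with $r|_N = \tp(a, b/N)$ and whose Morley sequence over $N$ coincides in EM-type with $\bar I''$. The $x$-projection of $\bar I''$ is an $N$-indiscernible with limit $r|_x$ and EM-type $p^{(\omega)}|_N$; Fact \ref{fact_morley} (applied with base $N$) then gives $r|_x = p$, and symmetrically $r|_y = q$.

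The core of the argument is the inductive construction of $\bar I$, starting from $(a_0, b_0) = (a, b)$. At step $i$, given $(a_j, b_j)_{j<i}$, I must produce $(a_i, b_i)$ realising the partial type
\[
\Sigma_i(x, y) = \tp(a, b/N)(x, y) \;\cup\; p|_{N a_{<i}}(x) \;\cup\; q|_{N b_{<i}}(y).
\]
I would pick $b_i$ so that $(b_0, \ldots, b_i)$ extends to a strict Morley sequence of $\tp(b/N)$ over $N$ (using the existence statement from Section \ref{sec_fork}), chosen moreover to be strictly non-forking over $N a_{<i}$. Once $b_i$ is fixed, the transferred type $\tau_{b_i}(x) := \tp(a, b/N)(x, b_i)$ is a complete consistent type over $N b_i$ extending $p|_N$, and I would take $a_i \models \tau_{b_i}(x) \cup p|_{N a_{<i}}(x)$; condition (i) then follows from the definition of $\tau_{b_i}$.

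The main obstacle is verifying that $\tau_{b_i}(x) \cup p|_{N a_{<i}}(x)$ is consistent. The naive attempt fails because $\tp(a/Nb)$ need not agree with $p|_{Nb}$, so $\tau_{b_i}$ and $p|_{N b_i}$ may diverge as completions of $p|_N$. To handle this, I would invoke the witnessing property of strict non-forking: a finite failure would yield formulas $\theta(x, y) \in \tp(a,b/N)$ and $\phi(x, \bar c) \in p$ with $\bar c \in N a_{<i}$ such that $\theta(x, b_i) \wedge \phi(x, \bar c)$ is inconsistent; pushing this through the strictly non-forking sequence $(b_j)$ over $N a_{<i}$ via a Ramsey/NIP alternation argument in the spirit of Lemma \ref{lem_low} would force $\phi(x, \bar c)$ to fork over $N$, contradicting $\phi \in p$ together with $a \models p|_N$. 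This interplay between the joint type $\tp(a, b/N)$, the strict non-forking machinery of Section \ref{sec_fork}, and the $M$-invariance of $p$ and $q$ is the technical heart of the argument; NIP is used throughout, both in guaranteeing the existence of strict Morley sequences and in the Fact \ref{fact_morley} uniqueness that closes the limit-type identification.
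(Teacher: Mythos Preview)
Your argument has a genuine gap at the step where you set $r=\lim(\bar I''/\monster)$ and claim this is a global $N$-invariant type whose Morley sequence over $N$ has the EM-type of $\bar I''$. Neither claim holds. The limit of an $N$-indiscernible sequence is finitely satisfiable in the sequence, but it is \emph{not} $N$-invariant in general: in DLO, take $N$ any model, $p$ the type at $+\infty$, and $(a_i)_{i<\kappa}$ any increasing sequence above $N$; this is a Morley sequence of $p$ over $N$, yet $\lim((a_i)/\monster)$ is the type of the cut $\sup_i a_i$, which is not $N$-invariant unless the sequence happens to be cofinal in $\monster$. Making the sequence ``sufficiently long'' does not help, since $N$-conjugate parameters $c\equiv_N c'$ can still produce different eventual truth values along the same sequence. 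For the same reason, $\bar I''$ is not a Morley sequence of its own limit type, so your appeal to Fact~\ref{fact_morley} to identify $r|_x$ with $p$ is unjustified.

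Separately, the inductive construction of $\bar I$ is doing far more work than needed, and the sketch you give for the consistency of $\tau_{b_i}(x)\cup p|_{Na_{<i}}(x)$ via strict non-forking and Lemma~\ref{lem_low} is not convincing as stated (the witnessing property goes in the wrong direction for the implication you want). In fact the consistency of each $\Sigma_i$---indeed of the full partial type $p(x)\cup q(y)\cup \tp(a,b/N)$---follows from a one-line ``drag-down'' argument: if $\theta(x,y)\in\tp(a,b/N)$, $\phi(x;c)\in p$, $\psi(y;d)\in q$ were jointly inconsistent, choose $(c',d')\in N$ with $(c',d')\equiv_{M}(c,d)$ (and with the same type over the parameters of $\theta$) using $|M|^+$-saturation of $N$; then by $M$-invariance $\phi(x;c')\in p|_N$ and $\psi(y;d')\in q|_N$, so $(a,b)$ witnesses consistency. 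The paper's proof is exactly this, followed by the citation (Fact~\ref{fact_fork}, or \cite[Cor.~3.34]{CherKapl}) that in NIP any consistent $N$-invariant partial type extends to a global $N$-invariant one. No sequence construction, no Ramsey, no strict Morley sequences are needed.
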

\begin{proof}
We know by Fact \ref{fact_fork} (or see Corollary 3.34 of \cite{CherKapl}) that any $N$-invariant consistent partial type extends to a global $N$-invariant type. Thus it is enough to show that $p(x)\cup q(y)\cup \tp(a,b/N)$ is consistent. This is easy: any inconsistency can be dragged down in $N$ by $M$-invariance of $p$ and $q$.
\end{proof}

We recall honest definitions. If $A\subseteq M$, then the pair $(M,A)$ is the expansion of $M$ obtained by adding a unary predicate $\mathbf P(x)$ naming the subset $A$.

\begin{fait}[\cite{NIPbook}, Theorem 3.13]\label{fact_honest}
Let $M\models T$, $A\subseteq M$, $\phi(x;y) \in L$ and $c\in M$ a $|y|$-tuple. Assume that $\phi(x;y)$ is NIP. Then there is an elementary extension $(M,A)\prec (M',A')$, a formula $\phi'(x;z)\in L$ and a tuple $c'$ of elements of $A'$ such that \[\phi(A;c)= \phi'(A;c') \text{  and  } \phi'(A';c') \subseteq \phi(A';c).\]
\end{fait}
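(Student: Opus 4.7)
The plan is to produce the honest definition in the explicit form $\phi'(x;y_1,\ldots,y_n):=\bigvee_{i=1}^n \phi(x;y_i)$, with $c':=(c_1,\ldots,c_n)\in (A')^n$, working inside a sufficiently saturated elementary pair extension $(M,A)\prec (M',A')$. Under this ansatz the inclusion $\phi'(A';c')\subseteq \phi(A';c)$ reduces, over $i$, to the requirement that each $c_i$ satisfy the $L_{\pred}(Mc)$-formula $\theta(y):=\pred(y)\wedge \forall x[\pred(x)\wedge \phi(x;y)\to \phi(x;c)]$, while the equality $\phi(A;c)=\phi'(A;c')$ becomes the requirement that the traces $\phi(A;c_i)$ jointly cover $\phi(A;c)$.

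First I would establish consistency, in the pair theory, of the partial $L_{\pred}(Mc)$-type $\Pi$ asserting $\theta(z_i)$ for each $i<\omega$ together with the disjunctions $\bigvee_{i\leq k}\phi(a;z_i)$ for each $a\in \phi(A;c)$ and each finite $k$. By saturation and compactness in $(M',A')$, this reduces to showing, for each individual $a\in \phi(A;c)$, that $\theta(y)\wedge \phi(a;y)$ is consistent in $\mathrm{Th}(M,A)$. I expect NIP of $\phi$ to enter precisely here, via a Ramsey/alternation argument: were consistency to fail, then using $c$ itself as a seed witness for $\phi(a;c)$ and iterating, one would construct an indiscernible sequence over $M$ along which $\phi(x;c)$ alternates infinitely often, contradicting NIP. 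Granted this, saturation delivers a countable family $(z_i)_{i<\omega}$ of tuples from $A'$ each satisfying $\theta$ and whose traces on $A$ jointly cover $\phi(A;c)$.

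The main obstacle is upgrading this countable cover to a finite one; without NIP the covering number can be genuinely infinite. I would resolve this via the $(p,q)$-theorem (Fact~\ref{fact_pq}), or equivalently via Proposition~\ref{prop_corpq}, applied to the dual set system $\mathcal{S}:=\{\phi(A';z)\cap \phi(A;c):z\in A',\ \theta(z)\}$. The $(p,q)$-property for $\mathcal{S}$, for any pair $p\geq q>\VC^*(\phi)$, should follow from Lemma~\ref{lem_low}: along an $M$-indiscernible sequence of parameters in $\theta(A')$, inconsistency of the associated family of $\phi$-instances is already witnessed at stage $\VC^*(\phi)+1$, and indiscernibility then yields the requisite density for the $(p,q)$-property. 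Fact~\ref{fact_pq} then produces a bounded piercing number $N$ and hence a finite subcover $\{\phi(A';c_i)\}_{i\leq N}$ of $\phi(A;c)$; reading off $c'=(c_1,\ldots,c_N)$ and $\phi'(x;y_1,\ldots,y_N)=\bigvee_{i\leq N}\phi(x;y_i)$ completes the construction.
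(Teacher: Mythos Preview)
The paper does not prove this statement; it is quoted as a Fact from \cite{NIPbook} and used without proof. So there is no argument in the paper to compare against directly, but your proposal can be assessed on its own.

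Your ansatz that $\phi'$ may be taken to be a disjunction $\bigvee_{i}\phi(x;y_i)$ is wrong, and this is fatal to the whole plan. Counterexample: let $T$ be DLO, $M=\mathbb Q$, $\phi(x;y)=(x<y)$, $c=0$, and $A=\{-1\}\cup\{1/n:n\geq 1\}$. Then $\phi(A;c)=\{-1\}$. Your first step asks, for $a=-1$, for some $d\in A'$ with $-1<d$ and $\phi(A';d)\subseteq\phi(A';c)$. But the $L_{\pred}$-sentence
\[
\exists y\,\bigl[\pred(y)\wedge -1<y\wedge\forall x(\pred(x)\wedge x<y\to x<0)\bigr]
\]
already fails in $(M,A)$: any $d\in A$ with $d>-1$ equals some $1/m$, and then $1/(m{+}1)\in A$ lies in $\phi(A;d)\setminus\phi(A;c)$. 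By elementarity the sentence fails in every $(M',A')\succ(M,A)$, saturated or not, so no such $d\in A'$ exists and your Step~1 is simply false here. More generally, a set of the form $\bigcup_i\phi(M;d_i)$ with $d_i\in A'$ is the initial segment determined by $\max_i d_i$, and the same computation rules it out. A correct honest definition in this example is $\phi'(x;z)=(x=z)$ with $c'=-1\in A$, which is not a positive Boolean combination of $\phi$-instances. The proof in \cite{NIPbook} accordingly allows $\phi'$ to be an arbitrary Boolean combination (positive and negative) of $\phi$-instances and establishes consistency of the relevant pair type by a direct NIP alternation argument, not via the $(p,q)$-theorem. Your ``seed $c$ and iterate'' sketch cannot get started either, since $c$ need not satisfy $\pred(y)$ and hence is not a witness for $\theta$.

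Even setting the ansatz aside, the $(p,q)$ step is muddled. Applied to the family $\mathcal S=\{\phi(A;d):d\in\theta(A')\}$ as you wrote it, Fact~\ref{fact_pq} would yield a finite \emph{hitting set} in $A$, not a finite covering subfamily of $\mathcal S$; if you meant to apply it to the dual family $\{S^*_a:a\in\phi(A;c)\}$ with $S^*_a=\{d\in\theta(A'):\phi(a;d)\}$, then the $(p,q)$-property you must verify is that among any $p$ points of $\phi(A;c)$ some $q$ lie in a common $\phi(A';d)$ with $d\in\theta(A')$, which is a statement about $x$-tuples. Lemma~\ref{lem_low}, however, concerns indiscernible sequences of $|y|$-tuples and $(\VC^*(\phi){+}1)$-consistency of $\{\phi(x;b_i)\}$, so it does not establish this.
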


\begin{Prop}
Let $M\prec^+ N$ and let $p,q$ be two global types finitely satisfiable in $M$. Let $a\models p|_N$ and $b\models q|_N$, then $\tp_{x,y}(a,b/N)\cup p(x)\cup q(y)$ is finitely satisfiable in $N$.
\end{Prop}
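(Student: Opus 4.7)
The plan is to combine the invariant amalgamation of Lemma~\ref{lem_amalg} with the retraction $F_N$ built in Section~\ref{sec_fm}: the point of $F_N$ here is precisely that it turns an $N$-invariant type into an $N$-finitely satisfiable one while leaving its restriction to $N$ and its coordinate projections essentially untouched.

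Concretely, I would first apply Lemma~\ref{lem_amalg} to produce a global $N$-invariant type $r(x,y)$ extending $p(x)\cup q(y)\cup \tp(a,b/N)$, and then set $\tilde r = F_N(r)$. By the defining construction of the retraction, $\tilde r$ is a global type finitely satisfiable in $N$, so it suffices to check that $\tilde r \supseteq \tp(a,b/N)\cup p\cup q$. The restriction to $N$ is immediate from property (i) of the basic lemma on $F_M$ in Section~\ref{sec_fm}: $\tilde r|_N = r|_N = \tp(a,b/N)$. For the containment of $p$, apply property (iv) of the same lemma to the coordinate projection $\pi_x$, which is definable without parameters: one gets $(\pi_x)_{*}(\tilde r)=F_N((\pi_x)_{*}(r))$. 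Since $r$ is a complete extension of the complete type $p(x)$, $(\pi_x)_{*}(r)=p$; and since $p$ is finitely satisfiable in $M\subseteq N$, property (iii) yields $F_N(p)=p$. Hence $(\pi_x)_{*}(\tilde r)=p$, so $\tilde r$ implies every formula of $p$. The symmetric argument with $\pi_y$ gives $\tilde r\vdash q(y)$, which completes the verification.

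The main conceptual step is the commutation of $F_N$ with coordinate projections: without it, there would be no reason for the $x$-reduct of $\tilde r$ to coincide with $p$ itself, rather than with some other $N$-finitely satisfiable global extension of $p|_N$. Once that is granted, the rest is a formal manipulation of the functorial properties of the retraction, and the hypotheses $a\models p|_N$, $b\models q|_N$ are used only to feed Lemma~\ref{lem_amalg}. Note also that NIP is used twice here, once in Lemma~\ref{lem_amalg} and once in the very existence of the retraction $F_N$, which is consistent with the standing assumption of the section.
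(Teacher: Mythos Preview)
Your argument is correct and, in a sense, more in the spirit of this paper than the paper's own proof. The paper argues by contradiction using honest definitions (Fact~\ref{fact_honest}): if some $\phi(x;c)\wedge\psi(y;c)\wedge\theta(x;y)$ with $\theta\in\tp(a,b/N)$, $\phi\in p$, $\psi\in q$ had no solution in $N$, then one replaces $\phi(x;c)$ and $\psi(y;c)$ by honest approximations $\phi'(x;c'),\psi'(y;c')$ over an elementary extension $N'$ of the pair, shows these are still implied by $p$ and $q$ (by finite satisfiability in $N$), and concludes that $\theta(x;y)\wedge p(x)\wedge q(y)$ is inconsistent, contradicting Lemma~\ref{lem_amalg}.

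Your route replaces the honest-definition step entirely by the retraction $F_N$: Lemma~\ref{lem_amalg} furnishes an $N$-invariant amalgam $r$, and then $F_N$ converts it to an $N$-finitely satisfiable one while preserving the restriction to $N$ and, via property~(iv) applied to the coordinate projections together with property~(iii), the marginals $p$ and $q$. This is shorter and more functorial; it also makes transparent that the only obstruction to amalgamating into a finitely satisfiable type is the obstruction to amalgamating into an invariant one. The paper's proof, by contrast, is more local (formula-by-formula) and avoids invoking the machinery of Section~\ref{sec_fm}, which may be preferable if one wants the amalgamation section to stand on its own. There is no circularity in your approach: the construction of $F_N$ and the four basic properties you use do not depend on anything in Section~\ref{sec_amalg}.
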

\begin{proof}
Otherwise, there is some $\theta(x;y)\in L(N)$, $\phi(x;c)\in L(\monster)$ and $\psi(y;c)\in L(\monster)$ such that $\phi(x;c)\wedge \psi(y;c)\wedge \theta(x;y)$ has no solution in $N$. Let $(\monster,N)$ denote the expansion of $\monster$ obtained by adding a unary predicate $\mathbf P(x)$ to name the submodel $N$. Let $(\monster,N)\prec^+ (\monster',N')$. By Fact \ref{fact_honest}, we can find two formulas $\phi'(x;c')\in L(N')$ and $\psi'(y;c')\in L(N')$ such that $\phi(N;c)= \phi'(N;c')$, $\phi'(N';c')\subseteq \phi(N';c)$ and same for $\psi'$. It follows that $\phi'(x;c')\wedge \psi'(y;c')\wedge \theta(x;y)$ has no solution in $N'$, hence is inconsistent. As $p$ is finitely satisfiable in $N$ (indeed in $M$) and $p\vdash \phi(x;c)$, necessarily $p\vdash \phi'(x;c')$. Similarly, $q\vdash \psi'(x;c')$. Hence we conclude that $\theta(x;y)\wedge p(x) \wedge q(y)$ is inconsistent. But this contradicts Lemma \ref{lem_amalg}.
\end{proof}

The assumption that $N$ is saturated over $M$ is necessary to avoid situations such as the following: let $M=(\mathbb Q;<)$. Let $p(x)$ be the $M$-invariant global type defined by $p\vdash x<a \iff 0<a$ and $p\vdash 0<x$ and let $q(y)$ be defined by $q\vdash y<a \iff 0<r<a$ for some $r\in \mathbb Q$. Then $p(x)|_M\cup q(y)|_M \cup \{x>y\}$ is consistent, but we cannot amalgamate $p(x)\cup q(y) \cup \{x>y\}$.

\smallskip
Another situation where NIP allows us to amalgamate is when the types considered fit inside indiscernible sequences. This was observed in \cite{distal}. We recall (a special case of) Lemma 2.9 from that paper which will allow us to work over slightly more complicated bases at the expense of commutativity assumptions.

Recall the following notation: if $I=(a_t : t\in \mathcal I)$ is an indiscernible sequence with no last element, then $\lim(I/A)$ denotes the type $\lim(\tp(a_t/A))$, which exists by NIP.

\begin{lemme}\label{lem_strongbasechange}
Let $I_1,I_2,I_3$ be sequences of tuples, without endpoints. Assume that the concatenation $I_1+I_2+I_3$ is indiscernible over $A$. Let $a,b\in \monster$ such that $I_1+a+I_2+I_3$ and $I_1+I_2+b+I_3$ are indiscernible over $A$. Let $B$ be any set of parameters. Then we can find $a',b'$ such that $a'\models \lim(I_1/B)$, $b'\models \lim(I_2/B)$ and $(a',b')\equiv_A (a,b)$.
\end{lemme}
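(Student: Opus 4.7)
The plan is to show that the partial type
\[\pi(x,y) := \tp(a,b/A)(x,y) \cup \lim(I_1/B)(x) \cup \lim(I_2/B)(y)\]
is consistent; any realisation then yields the desired $(a',b')$. The limit types $\lim(I_i/B)$ are well-defined for $i=1,2$ by NIP, so by compactness I would reduce to checking that every finite sub-type of $\pi$ is consistent. Before that, I would record an \emph{anchor} property: from the $A$-indiscernibility of $I_1+a+I_2+I_3$ and $I_1$ having no last element, a standard shift argument along the indiscernible sequence gives $a \models \lim(I_1/A\tilde I)$, where $\tilde I = I_1+I_2+I_3$, and symmetrically $b \models \lim(I_2/A\tilde I)$. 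So $a$ and $b$ already realise the correct limit types relative to $A\tilde I$; the real content is to upgrade from $A\tilde I$ to $B$.

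The main tool is Ramsey plus compactness. Applied to the data $(\tilde I, a, b)$, it should produce a new configuration $(\tilde J = J_1+J_2+J_3, a', b')$ satisfying: (1) $(\tilde J, a', b')$ has the same EM-type over $A$ as $(\tilde I, a, b)$, which gives $\tp(a',b'/A) = \tp(a,b/A)$ together with $A$-indiscernibility of $J_1+a'+J_2+J_3$ and $J_1+J_2+b'+J_3$; (2) $\tilde J$ is $AB$-indiscernible; and (3) the sequences $J_1+a'+J_2+J_3$ and $J_1+J_2+b'+J_3$ are moreover $AB$-indiscernible. From (2) together with NIP, the constant $B$-type of the elements of $J_i$ equals $\lim(I_i/B)$, and (3) then forces $a' \models \lim(I_1/B)$ and $b' \models \lim(I_2/B)$, as required.

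The main obstacle is the verification of finite consistency of the compactness requirement combining (1)--(3). Given a finite collection of $L(B)$-formulas, NIP guarantees that each has only finitely many alternations on the $A$-indiscernible sequence $\tilde I$; by passing to cofinal tails of $I_1,I_2,I_3$ one may assume the relevant $L(B)$-formulas have stabilised on each $I_i$. The delicate point is to align the $B$-values of the chosen representatives of $a$ and $b$ with those eventual values on $I_1$ and $I_2$ while preserving the joint type $\tp(a,b/A)$. The anchor property controls formulas with parameters in $A\tilde I$ but not in $B$, so one must exhibit an $A$-automorphism replacing $(a,b)$ by a pair $(a^*,b^*)$ with the correct $B$-behaviour; this coupling of the adjustments for $a$ and $b$, performed so as to preserve the joint $A$-type, is where the finite-alternations bound from NIP is used essentially.
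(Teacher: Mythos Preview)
The paper does not prove this lemma; it simply cites Lemma~2.9 of \cite{distal}. So there is no ``paper's own proof'' to compare against, and the question is whether your sketch stands on its own. It does not: the step you yourself flag as ``delicate'' is the entire content of the lemma, and you have not carried it out.

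Your reduction to consistency of $\pi(x,y)=\tp(a,b/A)\cup\lim(I_1/B)(x)\cup\lim(I_2/B)(y)$ and the anchor observation $a\models\lim(I_1/A\tilde I)$, $b\models\lim(I_2/A\tilde I)$ are both fine. The problem is the Ramsey--compactness step. Standard Ramsey applied to the data $(\tilde I,a,b)$ over $AB$ will produce an $AB$-indiscernible $\tilde J$ realising the EM-type of $\tilde I$ over $A$, but gives you \emph{no control whatsoever} over $\tp(J_i/B)$: there is no reason the elements of $J_1$ should realise $\lim(I_1/B)$ rather than some other $B$-type compatible with the $A$-EM-type. So the sentence ``the constant $B$-type of the elements of $J_i$ equals $\lim(I_i/B)$'' is unjustified. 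If instead you mean to extract $\tilde J$ as subsequences of the original $I_i$ cofinal in each piece (so that $AB$-indiscernibility forces the $B$-type to be the limit), then $a$ and $b$ are single points, not sequences, and there is nothing to extract them from; you must produce new $a',b'$ from scratch, which is precisely the problem.

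Your final paragraph concedes this: you need an $A$-automorphism taking $(a,b)$ to a pair $(a^*,b^*)$ whose $B$-behaviour matches the limits, and you assert that ``the finite-alternations bound from NIP'' makes this possible. But you give no mechanism. The difficulty is real: moving $a$ to realise $\lim(I_1/B)$ via an $A\tilde I$-automorphism will move $b$ as well, and after that $b$ need not realise $\lim(I_2/B)$; a second automorphism correcting $b$ will disturb $a$. The coupling of these two adjustments is exactly what Lemma~2.9 of \cite{distal} proves, and it requires an argument beyond a one-line appeal to bounded alternation---roughly, one shows that a failure of consistency lets one build, inside the indiscernible sequence, a configuration violating NIP, but the construction is not the generic Ramsey extraction you describe. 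As written, your proposal identifies the target and the obstacle but does not supply the key idea.
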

\begin{proof}
See Lemma 2.9 of \cite{distal}.
\end{proof}

\begin{lemme}\label{lem_basechangemut}
Let $I_1+I_2$, $J_1+J_2$ be sequences mutually indiscernible over $A$ (all sequences are without endpoints). Let $a$ and $b$ such that $I_1+a+I_2$ and $J_1+J_2$ are mutually indiscernible over $A$ and so are $I_1+I_2$ and $J_1+b+J_2$. Let $B$ be any set of parameters. Then we can find $a'$, $b'$ such that $a'\models \lim(I_1/B)$, $b'\models \lim(J_1/B)$ and $(a',b')\equiv_{A} (a,b)$.
\end{lemme}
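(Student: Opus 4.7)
I will apply Lemma \ref{lem_strongbasechange} twice, coordinating the two applications so that both insertions are pushed to their respective limits while the joint type over $A$ is preserved. By a standard compactness reduction I may assume $B$ is finite.

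For the first application I work with the $I$-sequences, enlarging the base to include $J_1\cup J_2$. From mutual indiscernibility of $I_1+I_2$ (respectively $I_1+a+I_2$) with $J_1+J_2$ over $A$, both $I_1+I_2$ and $I_1+a+I_2$ are indiscernible over $A\cup J_1\cup J_2$. Invoking Lemma \ref{lem_strongbasechange} (in its single-insertion specialisation, obtained by taking the second insertion trivial) with base $A\cup J_1\cup J_2$ produces $a^\sharp$ with $a^\sharp\equiv_{A\cup J_1\cup J_2}a$ and $a^\sharp\models\lim(I_1/B\cup J_1\cup J_2)\supseteq\lim(I_1/B)$. Picking an $(A\cup J_1\cup J_2)$-automorphism $\sigma$ with $\sigma(a)=a^\sharp$, I set $b^\sharp:=\sigma(b)$ and $I^\sharp:=\sigma(I)$; since $\sigma$ fixes $A$, $J_1$, and $J_2$ pointwise, all three mutual indiscernibility hypotheses transfer verbatim to the primed data, and $(a^\sharp,b^\sharp)\equiv_A(a,b)$.

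The second application is the symmetric move on the $J$-side: I apply Lemma \ref{lem_strongbasechange} to $J_1,J_2,b^\sharp$ with base $A\cup I_1^\sharp\cup I_2^\sharp\cup\{a^\sharp\}$ and target $B$, aiming to produce $b'$ with $b'\equiv_{A\cup\{a^\sharp\}}b^\sharp$ and $b'\models\lim(J_1/B)$. The pair $(a^\sharp,b')$ will then satisfy all three requirements of the lemma: $a^\sharp\models\lim(I_1/B)$, $b'\models\lim(J_1/B)$, and $(a^\sharp,b')\equiv_A(a^\sharp,b^\sharp)\equiv_A(a,b)$.

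The main obstacle is verifying the hypotheses for this second application. The transported data yields $J_1+J_2$ indiscernible over $A\cup I_1^\sharp\cup I_2^\sharp\cup\{a^\sharp\}$ (from the transported mutual indiscernibility of $I_1^\sharp+a^\sharp+I_2^\sharp$ with $J_1+J_2$), but only $J_1+b^\sharp+J_2$ indiscernible over $A\cup I_1^\sharp\cup I_2^\sharp$---without $a^\sharp$ in the base. Bridging this gap is where the NIP hypothesis is used essentially: since $a^\sharp$ was constructed as a limit element on the $I_1$-side, a diagonal extraction in the spirit of the proof of Lemma \ref{lem_strongbasechange} itself supplies the missing indiscernibility (or, equivalently, directly produces the required $b'$ over the enlarged base), completing the construction.
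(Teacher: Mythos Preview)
Your two-step strategy breaks down at precisely the point you flag, and the hand-wave does not close the gap.

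After the first step you have $a^\sharp\equiv_{AJ_1J_2}a$, and you set $b^\sharp=\sigma(b)$, $I^\sharp=\sigma(I)$ for an $AJ_1J_2$-automorphism $\sigma$ with $\sigma(a)=a^\sharp$. The hypothesis you need for the second application is that $J_1+b^\sharp+J_2$ be indiscernible over $A\cup\{a^\sharp\}$. Conjugating by $\sigma^{-1}$, this is exactly the statement that $J_1+b+J_2$ is indiscernible over $A\cup\{a\}$ --- a statement about the \emph{original} data, before any construction. Nothing in the hypotheses gives this: you know $J_1+b+J_2$ is indiscernible over $A\cup I_1\cup I_2$, and you know $I_1+a+I_2$ is indiscernible over $A\cup J_1\cup J_2$, but there is no assumption relating $a$ and $b$ directly. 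For a concrete failure, take $T$ to be the theory of an equivalence relation with infinitely many infinite classes, let $I_1+I_2$ and $J_1+J_2$ enumerate elements in pairwise distinct classes, and choose $a,b$ in one common fresh class. All hypotheses of the lemma hold, yet $J_1+b+J_2$ is not indiscernible over $a$, since $E(b,a)$ holds while $E(j,a)$ fails for $j\in J_1\cup J_2$.

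The appeal to ``$a^\sharp$ is a limit along $I_1$'' does not help. That limit was taken over $B\cup J_1\cup J_2$, not over $b^\sharp$; and the indiscernibility of $J_1+b^\sharp+J_2$ that you \emph{do} possess is over $I_1^\sharp\cup I_2^\sharp=\sigma(I_1\cup I_2)$, a sequence with no particular relation to $I_1$. If instead you throw $b$ into the target of the first limit, you do obtain the missing indiscernibility, but then you lose $(a^\sharp,b)\equiv_A(a,b)$, since $b$ cannot sit in the base of the first application either: $I_1+a+I_2$ is not assumed indiscernible over $A\cup J_1\cup J_2\cup\{b\}$. The iterative scheme is stuck either way.

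The paper sidesteps this with a single application of Lemma~\ref{lem_strongbasechange} to a \emph{paired} sequence. One reindexes so that $I$ and $J$ are both indexed by $(0,3)$, with the $I$-cut at $1$ and the $J$-cut at $2$, and forms $K_l=\langle(a_t,b_t):t\in(l,l+1)\rangle$ for $l=0,1,2$. Mutual indiscernibility makes $K_0+K_1+K_2$ indiscernible over $A$; the two separate insertion hypotheses allow one to complete $a$ and $b$ to pairs $(a,b'_1)$ and $(a'_2,b)$ so that $K_0+(a,b'_1)+K_1+K_2$ and $K_0+K_1+(a'_2,b)+K_2$ are each indiscernible over $A$. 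A single call to Lemma~\ref{lem_strongbasechange} then moves both insertions simultaneously, and projecting the resulting congruence over $A$ to the $a$- and $b$-coordinates yields the conclusion. The pairing is what encodes the interaction between the two cuts into one indiscernible sequence; that is the idea missing from your approach.
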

\begin{proof}
We may assume that all the sequences considered have same order type as $\mathbb R$. Then we can write $I_1=(a_t :t\in (0,1))$, $I_2=(a_t:t\in (1,3))$, $J_1=(b_t:t\in (0,2))$ and $J_2=(b_t :t\in (2,3))$. For $l\in \{0,1,2\}$, let $K_l$ be the sequence of pairs $\langle (a_t,b_t) :t\in (l,l+1)\rangle$. By mutual indiscernibility, the sequence $K_1+K_2+K_3$ is indiscernible over $A$. Furthermore, we can find $b'_1$ and $a'_2$ such that $K_1+(a,b'_1)+K_2+K_3$ and $K_1+K_2+(a'_2,b)+K_3$ are indiscernible over $A$.

Now apply the previous lemma.
\end{proof}

\begin{Prop}\label{prop_betteramalg}
Let $M\prec^+ N$. Let $p,q,r\in S(\monster)$ be pairwise commuting $M$-invariant types. Let $c\models r|_N$, $a\models p|_{Nc}$, $b\models q|_{Nc}$. Let $B$ be any set containing $Nc$. Then we can find $a',b'$ such that $(a',b')\equiv_{Nc} (a,b)$, $a'\models p|_B$ and $b'\models q|_B$.
\end{Prop}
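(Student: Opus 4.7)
The plan is to apply Lemma~\ref{lem_basechangemut}. Set $A := Nc$. We must construct sequences $I_1, I_2, J_1, J_2$, each without endpoints, mutually indiscernible over $A$, with $\lim(I_1/B) = p|_B$ and $\lim(J_1/B) = q|_B$, and such that $I_1 + a + I_2$ is indiscernible over $A \cup J_1 \cup J_2$ while $J_1 + b + J_2$ is indiscernible over $A \cup I_1 \cup I_2$; once the hypothesis is in place, the conclusion of the lemma is precisely what we want.

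To build the sequences, I use the commutativity of $p$ and $q$: choose $J = J_1 + J_2 \models q^{(\omega+\omega)}|_{Bab}$ and then $I = I_1 + I_2 \models p^{(\omega+\omega)}|_{BabJ}$. Since the joint type $p^{(\omega+\omega)}_{\bar x} \otimes q^{(\omega+\omega)}_{\bar y}|_{Bab}$ is invariant under swapping the two factors by commutativity, $I$ and $J$ are mutually indiscernible over $Bab$ (hence over $A$), and each is also a Morley sequence of its respective type over $B$. In particular $\lim(I_1/B)=p|_B$ and $\lim(J_1/B)=q|_B$, giving the limit-type clauses.

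The crux is the insertion property. For $a$: commutativity of $p$ and $q$ together with $J \models q^{(\omega+\omega)}|_{Ba}$ allows one to reinterpret the product as $J \models q^{(\omega+\omega)}|_{Nc}$ and $a \models p|_{NcJ}$, so $a$ has the right type over $A \cup J$ at the level of singletons. To extend this to true insertion at the cut between $I_1$ and $I_2$, I invoke the commutativity of $p$ with the third type $r$ and the fact $c \models r|_N$: Proposition~\ref{prop_finitecofinite} applied to the commuting pair $(p,r)$, the Morley sequence $I$ of $p$ (over $N$ and in fact over $NJ$, by swapping factors once more using $q,r$ commute), and $c \models r|_{NJ}$, gives $\lim(I/NcJ) = p|_{NcJ}$. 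This equality means that $a$, which realises $p|_{NcJ}$, realises exactly the cut type of $I$ over $A\cup J$, so $I_1 + a + I_2$ remains indiscernible over $A \cup J$. A symmetric argument using commutativity of $q$ with $r$ handles the insertion of $b$ into $J$.

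The main obstacle is precisely this middle-insertion step: it is not enough that $a$ realises $p|_{AJ}$, one must also check that $a$ fits compatibly with the Morley structure on both sides of the cut. The role of the third type $r$ (through $c \models r|_N$) is exactly to provide, via Proposition~\ref{prop_finitecofinite}, the identification of $p|_{NcJ}$ with the cut-limit of $I$, which is what makes the insertion legal. Once the hypothesis is verified, Lemma~\ref{lem_basechangemut} produces $a', b'$ with $a' \models \lim(I_1/B) = p|_B$, $b' \models \lim(J_1/B) = q|_B$ and $(a', b') \equiv_A (a,b)$, completing the proof.
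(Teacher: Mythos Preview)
Your overall plan to use Lemma~\ref{lem_basechangemut} is the same as the paper's, but the insertion step contains a genuine gap.

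You build $I=I_1+I_2$ as a Morley sequence of $p$ over $BabJ$, so in particular $I$ is a Morley sequence of $p$ over $NcJa$. Over $NcJ$ this gives you that $a+I$ is a Morley sequence of $p$ (with $a$ at the \emph{start}), not that $I_1+a+I_2$ is indiscernible. Knowing only that $a\models p|_{NcJ}$ and that $I$ is a Morley sequence of $p$ over $NcJa$ does \emph{not} allow you to slide $a$ into the middle of $I$. A one-line counterexample in DLO: let $p$ be the type at $+\infty$, let $a$ be any point, and let $I=(a+1,a+2,\ldots)$, a Morley sequence of $p$ over $\{a\}$. Then $a\models p|_\emptyset$, yet for any split $I=I_1+I_2$ the sequence $I_1+a+I_2$ is not even increasing. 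Your appeal to Proposition~\ref{prop_finitecofinite} does not help here: first, the base $NJ$ is not an $|M|^{+}$-saturated model, so the proposition is not literally applicable; second, even granting the conclusion $\lim(I/NcJ)=p|_{NcJ}$, this is the end-limit of $I$ (and is in any case automatic since each element of $I$ realises $p|_{NcJ}$), whereas insertion requires $a$ to realise the type $\lim(I_1/NcJI_2)$, which is a type over $NcJI_1I_2$ and is a different object.

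The paper's proof handles exactly this point by building $I_1$ \emph{before} $a$: for each small $A\subset N$ containing $M$, one takes $I_1^{A}\subset N$ a Morley sequence of $p$ over $A$, and then the commutativity of $p$ with $r$ and with $q$ gives that $I_1^A+a$ is a Morley sequence of $p$ over $Ac$ while $I_1^A$ is a Morley sequence of $p$ over $Acb$; compactness then produces $I_1$ with the same properties over $Nc$. Only $I_2$ (and $J_2$) are built over $B$ and everything else. This is precisely what makes $I_1+a+I_2$ indiscernible rather than merely $a+I_1+I_2$, and it is the idea missing from your argument.
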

\begin{proof}
First, we build a sequence $I_1$ such that $I_1+a$ is a Morley sequence of $p$ over $Nc$ and $I_1$ is a Morley sequence of $p$ over $Ncb$. To show that it is possible, fix some small $A\subset N$ containing $M$. Let $I_1^{A}\subset N$ be a Morley sequence of $p$ over $A$. Then by the commutativity assumptions, $I_1^A+a$ is a Morley sequence of $p$ over $Ac$ and $I_1^A$ is a Morley sequence of $p$ over $Acb$. We conclude by compactness.

Next, we construct similarly a sequence $J_1$ such that $J_1+b$ is a Morley sequence of $q$ over $NI_1c$ and $J_1$ is a Morley sequence of $q$ over $NI_1ca$. Finally, we build $I_2$ (resp. $J_2$) a Morley sequence of $p$ (resp. $q$) over everything, including $B$. We take the index set of those sequences to be without endpoints.

Now, using again the commutativity assumptions: $I_1+I_2$ and $J_1+J_2$ are mutually indiscernible over $Nc$ and both $I_1+a+I_2$ and $J_1+b+J_2$ are indiscernible over $Nc$. Let $I_2^*$ (resp. $J_2^*$) be the sequence $I_2$ (resp. $J_2$)  indexed in the opposite order. By construction $\lim(I_2^*/B)=p|_B$ and similarly for $J^*_2$. Thus we can apply the previous lemma to obtain what we want.
\end{proof}

It seems possible that the commutativity assumptions can be somewhat relaxed.

\begin{cor}
Let $(p_i:i<\alpha)$ be a family of global invariant types. Assume that the $p_i$'s are pairwise orthogonal. Then $\bigcup_{i<\alpha} p_i(x_i)$ defines a complete type in the variables $(x_i:i<\alpha)$.
\end{cor}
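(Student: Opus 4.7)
The plan is to reduce to finite $\alpha$ by compactness and then induct on $n = |\alpha|$. The cases $n \leq 2$ are immediate from the definition of orthogonality (with $n=2$ being the very definition of weak orthogonality of the global types). The key ingredient for the inductive step is the following sublemma, which I would extract from Proposition~\ref{prop_betteramalg}: if $p, q, r$ are pairwise orthogonal $M$-invariant global types and $c \models r|_{\monster}$, then $p|_{\monster c} \cup q|_{\monster c}$ defines a complete type over $\monster c$, equal to $(p \otimes q)|_{\monster c}$.

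To prove the sublemma, apply Proposition~\ref{prop_betteramalg} taking the role of $N$ to be played by $\monster$ itself (permissible since $M \prec^+ \monster$), working inside a sufficiently saturated ambient $\monster' \supset \monster c$. Given any realisation $(a,b)$ of $p|_{\monster c} \cup q|_{\monster c}$, choose a model $B$ with $\monster c \subseteq B$ and $M \prec^+ B$. The proposition produces $(a', b') \equiv_{\monster c} (a, b)$ with $a' \models p|_B$ and $b' \models q|_B$; the hypothesis that $p$ and $q$ are orthogonal global $M$-invariant types yields weak orthogonality of $p|_B$ and $q|_B$ over the model $B$, which forces $\tp(a', b'/B) = (p \otimes q)|_B$. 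Restricting to $\monster c \subseteq B$ yields $\tp(a, b/\monster c) = (p \otimes q)|_{\monster c}$, independently of the choice of $(a, b)$.

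For the inductive step with $n \geq 3$, let $R = p_1 \otimes \cdots \otimes p_{n-2}$, well-defined by the inductive hypothesis applied to $\{p_1, \ldots, p_{n-2}\}$ together with pairwise commutativity of orthogonal types. Applying the inductive hypothesis to $\{p_1, \ldots, p_{n-1}\}$ and to $\{p_1, \ldots, p_{n-2}, p_n\}$ gives $R \perp p_{n-1}$ and $R \perp p_n$ respectively, so the triple $(R, p_{n-1}, p_n)$ is pairwise orthogonal. For a realisation $(a_1, \ldots, a_n)$ of $\bigcup_i p_i$, the inductive hypothesis also supplies $(a_1, \ldots, a_{n-2}) \models R|_{\monster a_n}$ and, by pairwise orthogonality of $p_{n-1}$ and $p_n$, $a_{n-1} \models p_{n-1}|_{\monster a_n}$. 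The sublemma with $c = a_n$ then pins down $\tp(a_1, \ldots, a_{n-1}/\monster a_n) = (R \otimes p_{n-1})|_{\monster a_n}$, and combining with $\tp(a_n/\monster) = p_n$ gives $\tp(a_1, \ldots, a_n/\monster) = \bigotimes_i p_i$, uniquely determined. The main obstacle is the sublemma: the move that makes everything work is to invoke Proposition~\ref{prop_betteramalg} with $N = \monster$ and a model $B$ beyond it, so that the complete joint type $(p \otimes q)|_B$ descends by restriction to the non-model $\monster c$.
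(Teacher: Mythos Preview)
Your proof is correct and follows essentially the same approach as the paper: the paper proves the case $\alpha=3$ directly (your sublemma, via Proposition~\ref{prop_betteramalg} applied with $N=\monster$ inside a bigger universe and using weak orthogonality over a saturated model beyond $\monster c$), and then simply asserts that ``the general case follows by induction on $\alpha$'' without spelling out the step. You have made that induction explicit by packaging $p_1,\ldots,p_{n-2}$ into $R$ and re-applying the three-type case, which is exactly what the paper leaves implicit.
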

\begin{proof}
Let $\kappa$ be such that all types considered are invariant over a set of size $<\kappa$.

First assume that $\alpha=3$. So let $p,q,r$ be three invariant types which are pairwise orthogonal. In particular, they pairwise commute. Let $a\models p$, $b\models q$ and $c\models r$. Let $N$ be some $\kappa$-saturated model containing $\monster c$. The types $p,q,r$ have a unique extension to an $M$-invariant type over $N$ which we denote by the same letter. By Proposition \ref{prop_betteramalg}, we can find $a',b'$ such that $a' \models p|N$, $b'\models q|N$ and $(a',b')\equiv_{\monster c} (a,b)$. By orthogonality of $p$ and $q$, we have $(a,b)\models p\otimes q|N$. In particular $(a,b)\models p\otimes q|\monster c$. and $(a,b,c)\models p\otimes q \otimes r$.

The general case follows by induction on $\alpha$.
\end{proof}

\section{Open problems}\label{sec_questions}

We list here a few open problems.

First, let us recall the central conjecture of Section \ref{sec_pq}, which was first stated in \cite{ExtDef2}.

\begin{conj}\label{conj_gen}
Let $T$ be NIP and $M\models T$. Let $\phi(x;d)\in L(\monster)$ a formula, non-forking over $M$. Then there is $\theta(y)\in \tp(d/M)$ such that the partial type $\{ \phi(x;d') : d'\in \theta(\monster)\}$ is consistent.
\end{conj}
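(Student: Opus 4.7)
The plan is to extend the proof of Theorem \ref{th_pqdef} beyond its countability hypothesis on the set of coheirs. As explained in the paragraph before Theorem \ref{th_pqdef}, one first reduces to a countable language by taking a countable sublanguage $L_0$ containing $\phi(x;y)$; then, using Corollary \ref{cor_low} together with Fact \ref{fact_fork}, one finds a countable elementary submodel $M_0 \prec M$ over which $\phi(x;d)$ still does not divide. A formula $\theta \in \tp(d/M_0)$ witnessing the conclusion over $M_0$ is automatically in $\tp(d/M)$, so it suffices to prove the conjecture assuming $L$ and $M$ are countable.

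Under this reduction, the conjecture is equivalent to the statement that there is some $a \in \monster$ such that $\phi(a;y)$ lies in every global coheir of $\tp(d/M)$: this is the content of the unnamed lemma preceding Theorem \ref{th_pqdef}, whose proof shows that the absence of such a $\theta$ would allow one to build a sequence in $M \setminus \phi(a;M)$ converging to $\tp(d/M)$ and hence produce a coheir of $\tp(d/M)$ missing $\phi(a;y)$.

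To produce such an $a$ I would follow the template of Theorem \ref{th_pqdef}: fix an $|M|^+$-saturated $N \succ M$, a strict Morley sequence $(b'_i : i < \omega)$ of $\tp(d/M)$ in $N$, and set $\pi_0(x) = \{\phi(x;b'_i) : i \le \VC^*(\phi)\}$, which is consistent and non-forking over $M$ by the witnessing property of strict non-forking (Section \ref{sec_fork}). One then extends $\pi_0$ stepwise by the $\phi$-type of a realisation chosen to handle one additional coheir, using Lemma \ref{lem_low} to preserve consistency at successor stages and the net-indexed approximation of Lemma \ref{lem_frechetuncount} in place of Lemma \ref{lem_frechet} at limit stages. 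The successor step is cardinality-free and goes through verbatim.

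The hard part, and where I expect the argument to stall, is obtaining an a priori bound on the length of the induction. In Theorem \ref{th_pqdef} the bound is $\omega$ and comes from the hypothesis of countably many coheirs together with NIP; in the large-directionality regime of the Directionality subsection, NIP alone provides no such ordinal cap, and the iteration may run out of permissible length before every coheir has been handled. A natural parallel strategy would be to realise $a$ through the $F_M$-retraction of Section \ref{sec_fm}: extend $\phi(x;d)$ to an $M$-invariant global type $p$ (Fact \ref{fact_fork}) and try to extract $a$ from $F_M(p)$, exploiting commutativity with $M$-finitely satisfiable types (Lemma \ref{lem_fsfm}). The DLO example in Section \ref{sec_fm} already warns that $F_M(p)$ can collapse distinct $M$-invariant extensions of $p|_M$, so it need not remember the formula $\phi(x;d)$; a successful reduction will require an additional idea, and this is where I expect the main difficulty to lie.
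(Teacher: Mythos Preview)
The statement you are attempting is Conjecture~\ref{conj_gen}, which the paper places among its \emph{open problems} in Section~\ref{sec_questions}; it is essentially a restatement of Conjecture~\ref{conj_pq} from Section~\ref{sec_pq}. The paper does not prove it, so there is no proof to compare your proposal against.

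Your proposal is, accordingly, not a proof but an analysis of possible approaches and their obstructions, and on that level it is accurate. Your reductions (to countable $L$, then to countable $M$, then to the statement $(**)$ via the lemma preceding Theorem~\ref{th_pqdef}) are correct and match the paper's own reductions. You also correctly locate the point of failure: the inductive construction in Theorem~\ref{th_pqdef} terminates only because the coheirs of $q$ are assumed countable (or, as the paper notes just after the theorem, because the space of coheirs is separable), and absent such a hypothesis there is no a priori bound on the length of the iteration. The paper offers no way around this and leaves the full conjecture open. Your speculative alternative via $F_M$ is not pursued in the paper, and as you yourself observe it faces its own difficulties.

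In short: your diagnosis is sound, but neither you nor the paper has a proof; the paper explicitly records this statement as open.
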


In dp-minimal theories, a stronger version seems plausible:

\begin{conj}\label{conj_dpmin}
Assume that $T$ is dp-minimal. If the formula $\phi(x;b)$ does not fork over $M$, then it extends to an $M$-definable type.
\end{conj}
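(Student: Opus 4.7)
My plan would be to combine Fact \ref{fact_fork} with the dp-minimal dichotomy of Theorem \ref{th_dpmindich}. First I would observe that since $T$ is dp-minimal (hence NIP), Fact \ref{fact_fork} converts the non-forking hypothesis on $\phi(x;b)$ into the existence of a global $M$-invariant type $p$ containing $\phi(x;b)$. Every type in a dp-minimal theory has dp-rank at most $1$, so $p$ is itself dp-minimal and Theorem \ref{th_dpmindich} applies to it: either $p$ is $M$-definable, or $p$ is finitely satisfiable in $M$.

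In the first case, $p$ itself is the desired $M$-definable extension. In the second case, I would exploit the very definition of finite satisfiability: the particular formula $\phi(x;b) \in p$ admits a realisation $m \in M$, and the realised global type $\tp(m/\monster)$ is then tautologically $M$-definable via the scheme $d\psi(y):=\psi(m;y)$ (this lies in $L(M)$ since $m\in M$) and of course contains $\phi(x;b)$. Either horn of the dichotomy therefore produces an $M$-definable global type extending $\phi(x;b)$, settling the conjecture as literally stated.

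The main obstacle I foresee is not in carrying out the above steps but in deciding whether this short argument is really what the author is after. The fact that Conjecture \ref{conj_dpmin} appears in the open-problems section suggests a stronger intended reading — most plausibly a uniform version in the spirit of Conjecture \ref{conj_gen}, namely the existence of a formula $\theta(y)\in\tp(b/M)$ such that $\{\phi(x;b'):b'\models\theta\}$ lies in a single $M$-definable global type, or the production of a non-realised $M$-definable extension when $b\notin\acl(M)$. A strengthening of either sort would require a parametric analysis of the dichotomy in Theorem \ref{th_dpmindich}, tracking how the ``definable versus finitely satisfiable'' alternative behaves as $b$ varies in a definable family; this would presumably go through a uniform study of the retraction $F_M$ applied to the invariant extensions of $\phi(x;b)$, and that uniformity is where the genuine content would lie.
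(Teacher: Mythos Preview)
This statement is a conjecture in the paper's open-problems section; the paper offers no proof, so there is nothing to compare your argument against.

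Your argument has a genuine gap. The assertion ``every type in a dp-minimal theory has dp-rank at most $1$'' is false: dp-minimality of $T$ only guarantees that $1$-types have dp-rank $\leq 1$, while an $n$-type can have dp-rank up to $n$ (dp-rank is subadditive in tuples, not bounded by $1$). The paper's standing convention is that variables such as $x$ may denote tuples, and under that reading the $M$-invariant extension $p$ supplied by Fact~\ref{fact_fork} need not be dp-minimal, so Theorem~\ref{th_dpmindich} does not apply. This is not a mere technicality: the sentence immediately following the conjecture records that \cite{SimStar} confirms it only under the extra hypothesis that every formula with parameters $b$ extends to a $b$-definable type---an assumption that would be superfluous if your short argument worked in general.

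When $|x|=1$ your reasoning is correct, including the device of passing to the realised type $\tp(m/\monster)$ in the finitely-satisfiable branch, and it does settle the literal one-variable statement. Your suspicion that something is off is therefore well founded, but the explanation is the arity issue above rather than the uniformity or non-realised strengthenings you speculate about in your last paragraph. The open content of the conjecture lies in the case $|x|>1$, where no analogue of the dichotomy in Theorem~\ref{th_dpmindich} is available for invariant types of higher dp-rank, and there is no evident reduction to the rank-$1$ case.
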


In \cite{SimStar} written with S. Starchenko, we confirm this conjecture assuming in addition that any formula with parameters $b$ extends to a $b$-definable type.

\medskip
Returning to the initial problem which was mentioned in the introduction, we would like to analyse a general invariant type by a finitely satisfiable type and a definable `quotient'. The following is a test question in that direction.

\begin{conj}
Assume that $T$ is NIP. Let $M\pprec N$, $a\in \monster$ such that $\tp(a/N)$ is $M$-invariant, and $\phi(x;y)\in L(M)$. Then there is $b\in \monster$ and a formula $\psi(x;z)$ such that $\tp(b/N)$ is finitely satisfiable in $M$ and $\phi(N;a)=\psi(N;b)$.
\end{conj}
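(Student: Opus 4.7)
Let $p=p(y)\in S(\monster)$ be the unique global $M$-invariant extension of $\tp(a/N)$, which exists because $M\pprec N$. My plan is to try first to reduce the conjecture to the strengthened local claim that $\tp_{\phi^{opp}}(a/N)$ is already finitely satisfiable in $M$. Granted this, work in a pair $(N',M^{*})$ saturated over $(N,M)$ with $M$ named by $\pred$: the partial $L_{\pred}$-type $\tp_{\phi^{opp}}(a/N)\cup\{\pred(y)\}$ is consistent, and any realisation $b\in M^{*}$ has $\tp(b/N)$ finitely satisfiable in $M$ by elementarity, while $\phi(N;b)=\phi(N;a)$ by construction, closing the conjecture with $\psi=\phi$. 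Concretely one has to show that for every $\bar n\in N$ and $\bar\epsilon\in\{0,1\}^{k}$ with $\models\bigwedge_{i<k}\phi(n_{i};a)^{\epsilon_{i}}$, some $m\in M$ satisfies $\bigwedge_{i<k}\phi(n_{i};m)^{\epsilon_{i}}$. The formula $\Phi(y;\bar n,\bar\epsilon)=\bigwedge\phi(n_{i};y)^{\epsilon_{i}}$ does not divide over $M$ by Fact \ref{fact_fork}, and a natural attempt is to feed its opposite, viewed as a formula in the parameter variable $\bar x$, into Theorem \ref{th_coheirpq}, then pair the output with a strict Morley sequence in $\tp(\bar n/M)$ to extract an $M$-witness.

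As a fallback, apply honest definitions (Fact \ref{fact_honest}) in the pair $(\monster,M)$ to the formula $\phi(x;a)$: this yields an extension $(\monster',M^{*})$, a formula $\phi'(x;z)\in L$ and a parameter $c'\in M^{*}$ with $\phi(M;a)=\phi'(M;c')$ and $\phi'(\monster';c')\subseteq\phi(\monster';a)$, with $\tp(c'/\monster)$ automatically finitely satisfiable in $M$. Both maps $n\mapsto\phi(n;a)$ and $n\mapsto\phi'(n;c')$ factor through $\tp(\cdot/M)$ by $M$-invariance of $\tp(a/N)$ and of $\tp(c'/\monster)$ respectively, and they agree on types realised by elements of $M$. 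If this agreement can be promoted to all types realised in $N$, the pair $(b,\psi)=(c',\phi')$ closes the conjecture.

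The main obstacle is precisely this promotion step. Honest definitions as stated deliver equality only on the small model $M$ and one-sided containment on the further extension $\monster'$, whereas the conjecture demands equality on the intermediate $|M|^{+}$-saturated model $N$. Bridging this gap seems to require either a refined form of honest definitions that simultaneously tracks an $M$-finitely satisfiable parameter and respects the intermediate model $N$, or a genuine decomposition of $p$ along the $F_{M}$ retraction of Section \ref{sec_fm} into a finitely satisfiable ``base'' providing $b$ and a definable ``quotient'' providing $\psi$. This is exactly the philosophical picture of invariant types advocated in the introduction, so I expect a successful proof of the present conjecture to arrive together with real progress on that broader programme.
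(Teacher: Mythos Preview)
The statement you are attempting is listed in the paper as an open \emph{conjecture} (Section \ref{sec_questions}); the paper gives no proof, so there is nothing to compare your attempt against. Your proposal is not a proof, and to your credit you essentially say so in the final paragraph. Let me sharpen the assessment of your two approaches.

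Your first reduction --- showing that $\tp_{\phi^{opp}}(a/N)$ is itself finitely satisfiable in $M$, so that one may take $\psi=\phi$ --- is strictly stronger than the conjecture and is false in general. Take $T$ to be the theory of $(\mathbb Q;<,P)$ with $P$ naming a proper initial segment with no supremum and whose complement has no infimum. Let $p(y)$ be the $\emptyset$-definable global type saying $n<y\iff P(n)$ and $\neg P(y)$; let $a\models p|_N$. With $\phi(x;y)=x<y$, the $\phi^{opp}$-type of $a$ over $N$ asserts $n<y\iff P(n)$ for all $n\in N$. Since $N$ is $|M|^+$-saturated, there exist $n_1,n_2\in N$ with $P(n_1)$, $\neg P(n_2)$, $n_1>P(M)$ and $n_2<\neg P(M)$; no $m\in M$ satisfies $n_1<m<n_2$, so the $\phi^{opp}$-type is not finitely satisfiable in $M$. (The conjecture itself holds trivially here via $\psi(x)=P(x)$.) The paper's own example following the next conjecture exhibits the same phenomenon. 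So this route is a dead end, not merely incomplete.

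Your fallback via honest definitions is the more natural line, and your diagnosis of the obstacle is exactly right: Fact \ref{fact_honest} gives $\phi(M;a)=\phi'(M;c')$ and one-sided containment on the larger $M^*$, but nothing forces agreement on the intermediate saturated model $N$. Your observation that both $n\mapsto\phi(n;a)$ and $n\mapsto\phi'(n;c')$ factor through $\tp(n/M)$ is correct, and they agree on types realised in $M$; but promoting this to all of $S(M)$ (equivalently to $N$) is precisely the content of the conjecture, and honest definitions as stated do not supply it. Your closing remark --- that a proof would likely come packaged with the finitely-satisfiable/definable decomposition philosophy behind $F_M$ --- matches the paper's own framing of why this question is posed.
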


A positive answer to this conjecture, would imply a positive answer to the following:

\begin{conj}
Let $p(x)$ be a global $M$-invariant type and $\Delta$ a finite set of formulas, then there is a finite set $\Delta'$ of formulas such that for any $\phi(x;y)\in \Delta$ and $b,b'\in \monster$, if $\tp_{\Delta'}(b/M)=\tp_{\Delta'}(b'/M)$, then $p\vdash \phi(x;b)\leftrightarrow \phi(x;b')$.
\end{conj}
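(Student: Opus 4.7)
The plan is to derive the conjecture from the preceding ``test question'' (equivalently, from Conjecture~\ref{conj_gen}). Since $\Delta$ is finite and one can take the union of the $\Delta'$ produced for each $\phi \in \Delta$, it suffices to treat the case $\Delta = \{\phi(x;y)\}$. The two extremes are easy and suggest the shape of the general argument: if $p$ is $M$-finitely satisfiable, then $p \vdash \phi(x;b)$ depends only on the trace $\phi(M;b)$ and hence only on $\tp_{\{\phi\}}(b/M)$, so $\Delta' = \{\phi\}$ works; if $p$ is $M$-definable with $d_p\phi(y) = \chi(y;m)$ for some $m \in M$, then $\Delta' = \{\chi\}$ works.

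For general $p$, I would fix $M \pprec N$ and pick $a \models p|_N$, so that $\tp(a/N)$ is $M$-invariant. Applying the preceding conjecture to $\phi^{\opp}$ and $a$ yields a formula $\psi(x;z) \in L$ and $b \in \monster$ with $\tp(b/N)$ finitely satisfiable in $M$ and $\phi^{\opp}(N;a) = \psi(N;b)$, i.e.
\[ \models \phi(a;c) \iff \models \psi(c;b) \quad \text{for every } c \in N. \]
Extend $\tp(b/N)$ to a global $M$-finitely satisfiable type $\tilde q(z)$ and define $B(c) := (\tilde q \vdash \psi(c;z))$ for $c \in \monster$. For $c \in N$ one has $a \models p|_{Nc}$, hence
\[ p \vdash \phi(x;c) \iff \,\models \phi(a;c) \iff \,\models \psi(c;b) \iff B(c), \]
the last equivalence because $\tilde q|_N = \tp(b/N)$ already decides the $L(N)$-formula $\psi(c;z)$.

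Both sides of $p \vdash \phi(x;c) \Leftrightarrow B(c)$ are $M$-invariant in $c$: the left by assumption on $p$, the right because $\tilde q$ is $M$-fs and hence $M$-invariant. Since $N$ realises every type in $S_y(M)$, the equivalence then extends to all $c \in \monster$. Finally, writing $\tilde q = \lim_{\mathcal D}$ for an ultrafilter $\mathcal D$ on $M$, we get $B(c) \Leftrightarrow \{m \in M: \,\models \psi(c;m)\} \in \mathcal D$, which depends only on the trace $\psi(c;M)$, hence only on $\tp_{\{\psi\}}(c/M)$. So $\Delta' = \{\psi\}$ works for the fixed $\phi$, and unioning over $\phi \in \Delta$ delivers the desired finite $\Delta'$.

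The main obstacle is of course the invocation of the preceding conjecture to produce $(\psi,b)$: this is itself open, and the target statement is already flagged as a consequence of it. Without that input I see no mechanism to compress $p \vdash \phi(x;c)$ into finitely many first-order conditions on $c$ over $M$. Short of the full conjecture, partial progress looks feasible in settings where the two warm-up cases essentially exhaust the invariant types: Theorem~\ref{th_dpmindich} already handles dp-minimal $p$, and one might hope to imitate the strategy behind Theorem~\ref{th_pqdef} in theories of small or medium directionality, using the $F_M$ retraction of Section~\ref{sec_fm} to absorb the finitely satisfiable behaviour of $p$ and then arguing that the residual ``definable'' contribution is captured by a single $L(M)$-formula.
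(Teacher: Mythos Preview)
The statement in question is a \emph{conjecture} in the paper, not a theorem; the paper offers no proof and only asserts (without details) that it would follow from the preceding conjecture about replacing $\phi(N;a)$ by some $\psi(N;b)$ with $\tp(b/N)$ finitely satisfiable in $M$. Your proposal supplies precisely that missing derivation, and the argument is correct: apply the preceding conjecture to $\phi^{\opp}$ and $a\models p|_N$, pass to the unique global $M$-finitely satisfiable extension $\tilde q$ of $\tp(b/N)$, observe that $p\vdash\phi(x;c)\Leftrightarrow\tilde q\vdash\psi(c;z)$ first for $c\in N$ and then for all $c$ by $M$-invariance and saturation of $N$, and finally note that $\tilde q\vdash\psi(c;z)$ depends only on $\tp_{\psi}(c/M)$ since $\tilde q$ is finitely satisfiable in $M$. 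One cosmetic point: the $\psi$ you obtain has its first variable of sort $y$ (the sort of $c$), so writing $\psi(x;z)$ is a slight abuse; and strictly speaking $\Delta'$ should contain $\psi$ with this variable partition, which is what you use.

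You are right to flag that the input step is itself open; your proposal is thus a proof of the implication the paper claims, not of the conjecture itself. This is exactly the status the paper gives it, so there is nothing more to compare.
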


However, we cannot hope to be able  in general to choose $\Delta'$ independently of $p$ as the following example shows. Let $L=\{<; P_n : n<\omega\}$, where the $P_n$'s are unary predicates. The theory $T$ says that $<$ defines a dense linear order and the predicates $P_n$ name distinct initial segments of it. For each $n$, we have a $\emptyset$-definable type $p_n$ of an element satisfying $P_n$, but greater than all points in $P_n(\monster)$.

Now take $\Delta=\{<\}$. Then the previous conjecture holds for $p_n$ (and any choice of $M$) by taking $\Delta'=\{<,P_n\}$. However, for any $\Delta'$ in which $P_n$ does not appear, we can find two elements $b,b'$ having the same $\Delta'$-type over $M$, such that $b$ satisfies $P_n$ but $b'$ does not. Then we have $p\vdash x>b \wedge x<b'$.

\medskip

Finally, the main open question concerning $F_M$ is the following.

\begin{qu}
Does the map $F_M$ have bounded fibers? In other words, is it the case that for $p$ finitely satisfiable in $M$, the preimage $F_M^{-1}(p)$ of $p$ has cardinality bounded in terms of $|T|$, but independently of $|M|$ (by $2^{|T|}$ for example)? 
\end{qu}

A positive answer would prove for example that if $T$ has medium directionality, then there are at most $|M|^{|T|}$ \emph{invariant} types over any model $M$, and if $T$ has low directionality, then any type $p\in S(M)$ has at most $2^{|T|}$ many invariant extensions.

\section*{Acknowledgments}
Thanks to Gareth Boxall and Charlotte Kestner for pointing out some mistakes in an earlier version. Thanks also to the referee for many useful comments.

Supported by the European Research Council under the European Unions Seventh Framework Programme (FP7/2007-2013) / ERC Grant agreement no. 291111.

\bibliographystyle{unsrt}

\end{document}